\newtheorem{theorem}{Theorem}[section]
\newtheorem{lemma}[theorem]{Lemma}
\newtheorem{proposition}{Proposition}[section]
\newtheorem{corollary}[theorem]{Corollary}
\theoremstyle{definition}
\newtheorem{definition}[theorem]{Definition}
\theoremstyle{remark}
\newtheorem*{remark}{Remark}
\numberwithin{equation}{section}
\newcommand{\Y}{\mathrm{Y}}
\newcommand{\DY}{\mathrm{DY}}
\newcommand{\ben}{\begin{equation*}}
\newcommand{\een}{\end{equation*}}
\newcommand{\ts}{\,}
\newcommand{\tss}{\hspace{1pt}}
\newcommand{\beq}{\begin{equation}}
\newcommand{\eeq}{\end{equation}}
\newcommand{\wh}{\widehat}
\newcommand{\Ec}{\mathcal{E}}
\newcommand{\Fc}{\mathcal{F}}
\newcommand{\Hc}{\mathcal{H}}
\newcommand{\eb}{\bar{e}}
\newcommand{\gr}{ {\rm gr}\ts}
\newcommand{\Ac}{\mathcal{A}}
\begin{document}
\title[Center of the Yangian double in type A]
{Center of the Yangian double in type A}
\author{Fan Yang}
\address{Department of Mathematics,
Jiaying University, Meizhou, Guangdong 514000, China}
\email{$1329491781@qq.com$}

\author{Naihuan Jing$^{\dagger}$}
\address{Department of Mathematics,
   North Carolina State University,
   Raleigh, NC 27695, USA}
\email{jing@ncsu.edu}

\keywords{Yangian double, Harish-Chandra homomorphism, Wakimoto module} 
\thanks{$\dagger$Corresponding author: Naihuan Jing}

\begin{abstract}
We prove the R-matrix and Drinfeld presentations of the Yangian double in type A are isomorphic.
The central elements of the
completed Yangian double in type A at the critical level are constructed. The images of these elements under a Harish-Chandra-type
homomorphism are calculated by applying a version of the Poincar\'{e}-Birkhoff-Witt theorem for the R-matrix
presentation. These images coincide with the eigenvalues of the
central elements in the Wakimoto modules.
\end{abstract}
\maketitle
\section{Introduction}

During the last few decades, quantum groups introduced by Drinfeld \cite{Dr2} and Jimbo \cite{Jb} have been studied extensively
and played important roles in many branches of mathematics and physics. The Yangian $\Y_{h}(\mathfrak g)$
is one of the most important examples in the theory of quantum groups. The Hopf algebra first appeared
in the framework of the inverse scattering method introduced by
Faddeev, Sklyanin and their colleagues, see \cite{FT},\cite{FST},\cite{FRT}.
As is well-known, the Yangian $\Y_{h}(\mathfrak{g})$ is a canonical
deformation of the universal enveloping algebra $\mathrm{U}(\mathfrak
g[t])$. The Yangian double, which is the quantum double \cite{Dr1}
of the Yangian $\Y_{h}(\mathfrak{g})$, has nice applications
in massive field theory \cite{BL},\cite{LS},\cite{S}. In these works, the Yangian double
$\DY_{h}(\mathfrak{g})$ without central extension was discussed
for $\mathfrak g=\mathfrak {sl}_{2}$ \cite{K}. As for the general case,
Iohara \cite{I} defined the Yangian double
$\DY_{h}(\mathfrak{g})$ with a central extension for $\mathfrak
g=\mathfrak {gl}_{n}, \mathfrak {sl}_{n}$.

The purpose of this paper is to give explicit formulas for generating central elements
of the completed Yangian double $\DY_{h}(\mathfrak{gl}_n)_{cr}$ at
the critical level (Theorem 4.4).  The formulas are given with the help of the RLL-presentation of $\DY_{h}(\mathfrak{gl}_n)$. We use
a version of the Poincar\'{e}-Birkhoff-Witt theorem for this
presentation to introduce an analog of the Harish-Chandra
homomorphism and calculate the image of the central
elements, and these central elements are constructed as
certain higher Casimir operators (cf. \cite{CM}). Moreover, we construct an isomorphism between the R-matrix and Drinfeld realization of the Yangian double $\DY_{h}(\mathfrak{gl}_n)$, i.e. the Yangian analog of the Ding-Frenkel isomorphism in quantum affine algebras \cite{DF}.
Then we apply the isomorphism to calculate the eigenvalues of the central
elements in the Wakimoto modules over $\DY_{h}(\mathfrak{gl}_n)_{cr}$.
As an application of Theorem 4.4, we give explicit formulas for invariants of
the vacuum module $V_{h}(\mathfrak{gl}_n)$ over the Yangian double.
The invariants are obtained by the action of the central elements on
the vacuum vector.

The center of the quantum affine algebra at the critical level has been studied extensively \cite{DE, FJ}. It is thus expected that similar results
hold for the Yangian double, and we intend to give detailed computation for the image of Harish-Chandra homomorphism.
The Yangian double in type A at the critical level admits a quantum vertex algebra structure \cite{ji:cen}. In the BCD cases we expect there is a similar
quantum vertex algebra structure as results of this paper indicate.

The layout of the paper is as follows. In section \ref{s:Drin} we give detailed construction of the Yangian double $DY_h(\mathfrak{gl}_n)$
in the FRT-formulism and study the quantum root generators. This result should be
known to the experts but we include the details as we could not find it in the literature, though the similar construction is given for
type BCD in \cite{JYL}.
In section \ref{s:drins} we pass it down to the Drinfeld realization of the double Yangian $DY_h(\mathfrak{sl}_n)$ using a different method from
that of the quantum affine algebra. The center of the completed algebra $DY_h(\mathfrak{gl}_n)$ at the critical level is discussed in section \ref{s:cent}. In this section, we use a slightly different definition of $DY_h(\mathfrak{gl}_n)$ by normalizing the Yang R-matrix.
In section \ref{s:HS-hom} we define the Harish-Chandra homomorphism for the quantum algebra $DY_h(\mathfrak{gl}_n)_{cr}$, and then in section \ref{s:eigenvalue} we determine the eigenvalues of the Harish-Chandra operator at the critical level and find that the eigenvalues are also expressed
as some shifted elementary symmetric functions (cf. \cite{Mo3}).

\section{Drinfeld generators of the Yangian double}\label{s:Drin}
Let $V=\mathbb{C}^n$, and $P$  the permutation operator of $V\otimes
V$ such that
$$Pv\otimes w=w\otimes v,  \qquad v,w\in
V.
$$

Let $h$ be a parameter, and let $\bar{R}(u)$ be Yang's R-matrix:
\begin{equation}\label{1}
\bar R(u)=I+\frac{h}{u}P\in End(V\otimes V).
\end{equation}
where $\bar R(u)$ is expanded in negative powers of $u$ or positive
powers of $h$. The matrix $\bar R(x)$ satisfies the Yang-Baxter
equation on $V^{\otimes 3}$ with the unitary condition:
\begin{eqnarray}
\bar R_{12}(u-v)\bar R_{13}(u)\bar R_{23}(v)&=&\bar R_{23}(v)\bar R_{13}(u)\bar R_{12}(u-v),\\
\bar R_{12}(u)\bar R_{21}(-u)&=&\frac{u^{2}-h^{2}}{u^{2}}I.
\end{eqnarray}
Here for any $A=\sum a_{(1)}\otimes a_{(2)}\in End(V^{\otimes 2})$, one denotes 
that $A_{12}=A\otimes I, A_{23}=I\otimes A$, and $A_{13}=\sum
a_{(1)}\otimes I\otimes a_{(2)}\in End(V^{\otimes 3})$.

Let $\mathcal A$ be the ring $\mathbb C[[h]]$ in the $h$-adic
topology.

\begin{definition}\label{e:def-hopf}
The Yangian double $\DY_{h}(\mathfrak{gl}_n)$ is the associative
algebra over $\mathcal {A}$ generated by $\{l_{ij}^{(k)}\mid 1\leq
i,j\leq n,~k\in\mathbb{Z}\}$ and $c$. The relations are defined in
terms of the generating series
\begin{align*}
l_{ij}^{+}(u)&=\delta_{ij}-h\sum_{k\in\mathbb{Z}_{\geq 0}}l_{ij}^{(k)}u^{-k-1},\\
l_{ij}^{-}(u)&=\delta_{ij}+h\sum_{k\in\mathbb{Z}_{<
0}}l_{ij}^{(k)}u^{-k-1},
\end{align*}
and $l_{ij}^{\pm} (u)$ are in the matrix form:
\begin{align*}
L^{\pm}(u)=(l_{ij}^{\pm}(u))_{1\leq i,j\leq n}.
\end{align*}
The defining relations are given as follows.
\begin{align}
&[L^{\pm}(u),c]=0,\\ \label{e:2.5} 
&\bar R(u-v)L_1^{\pm}(u)L_2^{\pm}(v)=L_2^{\pm}(v)L_1^{\pm}(u)\bar R(u-v),\\ \label{e:2.6}  
& \bar
R(u-v-\frac{1}{2}hc)L_1^{+}(u)L_2^{-}(v)=L_2^{-}(v)L_1^{+}(u)\bar
R(u-v+\frac{1}{2}hc),
\end{align}
where the R-matrix is viewed as power series in $u^{-1}$ (via appropriate $h$-adic topology) and we have adopted the convention
\begin{align*}
L_{1}^{\pm}(u)&=L^{\pm}(u)\otimes id, \\
L_{2}^{\pm}(u)&=id\otimes L^{\pm}(u).
\end{align*}
The algebra $\DY_{h}(\mathfrak{gl}_n)$ has a Hopf algebra structure
given by
\begin{eqnarray*}
\triangle(L^{\pm}(u))&=&\sum_{k=1}^{n}L_{kj}^{\pm}(u\pm\frac{1}{4}hc_2)\otimes L_{ik}^{\pm}(u\mp\frac{1}{4}hc_1),\\
\triangle(c)&=&c\otimes 1+1\otimes c,\\
\varepsilon(L^{\pm}(u))&=&I, \qquad \varepsilon(c)=0,\\
S(^{t}L^{\pm}(u))&=&[^{t}L^{\pm}(u)]^{-1},\qquad S(c)=-c,
\end{eqnarray*}
where $c_1=c\otimes 1 ~and~ c_2=1\otimes c$.
\end{definition}

\begin{definition}
For $1\leq p,q\leq n$, we define the submatrices $L_{p,q}^{\pm}(u)$
of $L^{\pm}(u)$ as follows.
\begin{gather}
p=q,\quad L_{p,p}^{\pm}(u)=(l_{ij}(u))_{1\leq i,j\leq p}\\
\nonumber\\
p<q,\quad L_{p,q}^{\pm}(u)=\begin{pmatrix}
l_{11}^{\pm}(u)&\ldots&l_{1,p-1}^{\pm}(u)&l_{1,q}^{\pm}(u)\\
\vdots&&\vdots&\vdots\\
l_{p-1,1}^{\pm}(u)&\ldots&l_{p-1,p-1}^{\pm}(u)&l_{p-1,q}^{\pm}(u)\\
l_{p,1}^{\pm}(u)&\ldots&l_{p,p-1}^{\pm}(u)&l_{p,q}^{\pm}(u)
\end{pmatrix}\\
\nonumber\\
p>q,\quad L_{p,q}^{\pm}(u)=\begin{pmatrix}
l_{11}^{\pm}(u)&\ldots&l_{1,q-1}^{\pm}(u)&l_{1,q}^{\pm}(u)\\
\vdots&&\vdots&\vdots\\
l_{q-1,1}^{\pm}(u)&\ldots&l_{q-1,q-1}^{\pm}(u)&l_{q-1,q}^{\pm}(u)\\
l_{p,1}^{\pm}(u)&\ldots&l_{p,q-1}^{\pm}(u)&l_{p,q}^{\pm}(u)
\end{pmatrix}
\end{gather}
\end{definition}

Then we can give the Gauss decomposition of $L^{\pm}(u)$ in the following result.

\begin{theorem}
$L^{\pm}(u)$ have the following unique decompositions:
\begin{align*}
L^{\pm}(u)=&F^{\pm}(u)H^{\pm}(u)E^{\pm}(u)\\
=&\begin{pmatrix}
1&&&0\\f_{2,1}^{\pm}(u)&\ddots\\\vdots&&\ddots\\f_{n,1}^{\pm}(u)&\ldots&f_{n,n-1}^{\pm}(u)&1
\end{pmatrix}
\begin{pmatrix}
k_{1}^{\pm}(u)&&&0\\&\ddots\\&&\ddots\\0&&&k_{n}^{\pm}(u)
\end{pmatrix}\nonumber\\
&\begin{pmatrix}
1&e_{1,2}^{\pm}(u)&\ldots&e_{1,n}^{\pm}(u)\\&\ddots\\&&\ddots&e_{n-1,n}^{\pm}(u)\\0&&&1
\end{pmatrix}\nonumber
\end{align*}
\end{theorem}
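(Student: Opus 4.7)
The plan is to exhibit the Gauss factorization in closed form via quasideterminants and to read off uniqueness from the shape of the triangular factors. The key preliminary observation is that every entry of $L^\pm(u)$ has the form $\delta_{ij}+h\cdot(\text{series in }u^{\mp 1})$, so $L^\pm(u)\equiv I\pmod{h}$. Consequently, every leading principal submatrix of $L^\pm(u)$ is of the form $I+h(\cdots)$ and therefore invertible in the $h$-adic completion by the geometric series. This single point is what legitimises all the noncommutative inversions that appear below.

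For existence I would use the submatrices $L_{p,q}^\pm(u)$ of Definition~2.2 together with the Gelfand--Retakh quasideterminant $|A|_{p,p}$ (expansion at the bottom-right corner), and set
\begin{align*}
k_i^{\pm}(u) &= |L_{i,i}^{\pm}(u)|_{i,i},\\
e_{i,j}^{\pm}(u) &= k_i^{\pm}(u)^{-1}\, |L_{i,j}^{\pm}(u)|_{i,i},\\
f_{j,i}^{\pm}(u) &= |L_{j,i}^{\pm}(u)|_{i,i}\, k_i^{\pm}(u)^{-1},
\end{align*}
for $1\leq i<j\leq n$. Each quasideterminant is well defined because its expansion only requires inverting a smaller upper-left principal block, and such blocks are units by the first paragraph. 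That the product $F^\pm(u)H^\pm(u)E^\pm(u)$ recovers $L^\pm(u)$ can then be checked by induction on $n$ using the $2\times 2$ block Gauss identity with $A$ the upper-left $(n-1)\times(n-1)$ block; the Schur complement $D-CA^{-1}B$ supplies $k_n^\pm(u)$. For uniqueness, if $L^\pm(u)=FHE=F'H'E'$ are two such factorisations, then $F'^{\tss -1}F=H'E'E^{-1}H^{-1}$ is simultaneously lower unitriangular and upper triangular, hence equal to $I$, so the three factors must agree.

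The proof is conceptually routine; the main obstacle is bookkeeping. I would pay particular attention to two points: (i) verifying that the Gauss entries lie in the correct completed algebras, namely that $k_i^+(u)$, $e_{i,j}^+(u)$, $f_{j,i}^+(u)$ are power series in $u^{-1}$ whereas their minus counterparts are power series in $u$, all over the $h$-adic completion of $\DY_h(\mathfrak{gl}_n)$; and (ii) respecting the order of noncommuting factors when expanding both the quasideterminants and the block Gauss identity, since the entries of $L^\pm(u)$ do not commute. Once these two points are treated honestly, both existence and uniqueness of the decomposition follow.
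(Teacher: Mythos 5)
Your proposal is correct and follows essentially the same route as the paper: the paper also establishes the decomposition by exhibiting the Gaussian entries as quasideterminants (equivalently, ratios of quantum minors, given explicitly in its Proposition~3.2) and justifies their existence by the invertibility of the leading principal blocks in the $h$-adically completed algebra, which is exactly your observation that $L^{\pm}(u)\equiv I \pmod h$. Your explicit triangularity argument for uniqueness is standard and consistent with what the paper leaves implicit.
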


To prove this theorem, we only need to show that each component
$f_{p,q}^{\pm}(u)$, $k_{p}^{\pm}(u)$ or $e_{p,q}^{\pm}(u)$ is
well-defined in $\DY_{h}(\mathfrak{gl}_n)[[u^{\mp1}]]$. In
fact, we have the following explicit formulas of these elements in
terms of quantum minors. Then the elements are well-defined
since the algebra
$\DY_{h}(\mathfrak{gl}_n)$ is $h$-adically completed.

The following lemma is proved in \cite{I}. Recall that the (column) quantum determinant of a matrix $M=(m_{ij}(x))_{n\times n}$ is defined
by
$$qdet(M)=\sum_{\sigma\in \mathfrak S_n}sgn(\sigma)m_{\sigma_11}(x)m_{\sigma_22}(x+h)\cdots m_{\sigma_nn}(x+(n-1)h).
$$

\begin{lemma}
We have the following expansions for the quantum determinants
\begin{align}\label{e:2.10}
qdetL^{\pm}(u)=k_{1}^{\pm}(u)k_{2}^{\pm}(u+h)\cdots
k_{n}^{\pm}(u+(n-1)h).
\end{align}
\end{lemma}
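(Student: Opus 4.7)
My plan is to combine the antisymmetrizer characterization of the quantum determinant with the Gauss decomposition from the previous theorem. The first step is to establish the identity
\begin{equation*}
A_n \, L_1^{\pm}(u)\, L_2^{\pm}(u+h) \cdots L_n^{\pm}(u+(n-1)h) = qdetL^{\pm}(u) \cdot A_n,
\end{equation*}
where $A_n \in \mathrm{End}(V^{\otimes n})$ is the rank-$n$ antisymmetrizer. This is standard and purely a consequence of the RLL relation (2.5): the fusion procedure writes $A_n$ as a normalized product of the matrices $\bar R_{ij}(k\ts h)$ for $1\le i<j\le n$ and $1\le k<n$, and iterated application of the RLL relation together with the collapse $A_n \bar R_{ij}(h) = 0$ moves $A_n$ through the product, producing precisely the column expansion defining $qdetL^{\pm}(u)$ on the right.

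Next I would substitute $L^{\pm}(u) = F^{\pm}(u)\,H^{\pm}(u)\,E^{\pm}(u)$ into the left-hand side and evaluate on the fixed vector $\eta = e_1 \otimes e_2 \otimes \cdots \otimes e_n$, whose image $A_n \eta$ spans the one-dimensional top exterior power. Since $E^{\pm}(v)$ is upper unitriangular and $F^{\pm}(v)$ is lower unitriangular, when they act on $e_i$ in the $i$-th slot they produce $e_i$ plus off-diagonal terms with basis vectors $e_l$ of index $l\neq i$, while $H_i^{\pm}(u+(i-1)h)$ sends $e_i$ to $k_i^{\pm}(u+(i-1)h)\cdot e_i$. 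Any off-diagonal contribution places a basis vector already present in another slot of $\eta$, producing a tensor with a repeated index that is annihilated by $A_n$. Only the pure diagonal term survives, yielding
\begin{equation*}
A_n \, L_1^{\pm}(u) \cdots L_n^{\pm}(u+(n-1)h) \, \eta = k_1^{\pm}(u)\, k_2^{\pm}(u+h) \cdots k_n^{\pm}(u+(n-1)h) \cdot A_n \eta,
\end{equation*}
and comparison with the first display gives the lemma.

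The main obstacle is that an off-diagonal contribution from $E_i^{\pm}$ or $F_i^{\pm}$ does not die immediately; it persists inside the running vector, and a later unitriangular factor acting on a different slot can conceivably shift the corresponding basis index back into a legitimate permutation of $\eta$, producing a nonzero contribution after $A_n$. Such cross terms must cancel pairwise, and the cancellation ultimately relies on commutation relations between the Gauss coordinates $k_p^{\pm}$, $e_{p,q}^{\pm}$, and $f_{q,p}^{\pm}$ at shifted spectral arguments, all derivable from (2.5) restricted to $2\times 2$ submatrices. The cleanest way to organize this bookkeeping is by induction on $n$, passing from the $(n-1)\times(n-1)$ leading principal block to the full $n\times n$ matrix via a quantum Schur-complement identity for $L^{\pm}(u)$; this isolates the additional factor $k_n^{\pm}(u+(n-1)h)$ at the inductive step and leaves the rest to the inductive hypothesis applied to the block $L_{n-1,n-1}^{\pm}(u)$.
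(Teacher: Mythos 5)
Your opening identity $A_n L_1^{\pm}(u)\cdots L_n^{\pm}(u+(n-1)h)=qdetL^{\pm}(u)\,A_n$ is standard and is in fact the identity the paper itself invokes in Section 4, so that part is sound. The gap is in the second step, and it is exactly the one you flag yourself in your final paragraph: the claim that ``only the pure diagonal term survives'' under $A_n$ is false. An off-diagonal shift produced by $F^{\pm}$ in one slot can be compensated by an off-diagonal shift produced by $E^{\pm}$ in another slot, yielding a permutation of $e_1\otimes\cdots\otimes e_n$ rather than a repeated index; these surviving terms are precisely the non-identity permutation summands in the column expansion of $qdetL^{\pm}(u)$, and showing that they cancel against the cross terms hidden inside the diagonal entries $l^{\pm}_{jj}=k^{\pm}_j+\sum_{i<j}f^{\pm}_{ji}k^{\pm}_ie^{\pm}_{ij}$ is not bookkeeping --- it is the entire content of the lemma. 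Already for $n=2$ one has $qdetL^{\pm}(u)=k_1^{\pm}(u)\bigl(k_2^{\pm}(u+h)+f_1^{\pm}(u+h)k_1^{\pm}(u+h)e_1^{\pm}(u+h)\bigr)-f_1^{\pm}(u)k_1^{\pm}(u)k_1^{\pm}(u+h)e_1^{\pm}(u+h)$, and collapsing this to $k_1^{\pm}(u)k_2^{\pm}(u+h)$ requires the identity
\begin{equation*}
k_1^{\pm}(u)\,f_1^{\pm}(u+h)\,k_1^{\pm}(u+h)=f_1^{\pm}(u)\,k_1^{\pm}(u)\,k_1^{\pm}(u+h),
\end{equation*}
which is the degeneration of the $k_1^{\pm}$--$f_1^{\pm}$ exchange relation at the special separation $u-v=-h$ (multiply that relation by $u-v$ and set $v=u+h$). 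Nothing in your write-up carries out this cancellation for general $n$; you only name the device that would do it. The ``quantum Schur-complement identity'' you gesture at is the quasideterminant formula $k_i^{\pm}(u+(i-1)h)=qdetL^{\pm}_{i,i}(u)\,qdetL^{\pm}_{i-1,i-1}(u)^{-1}$ (Proposition 3.2 of the paper), from which the lemma does follow by telescoping, provided one also proves that the nested principal quantum minors $qdetL^{\pm}_{i,i}(u)$ pairwise commute so that the product collapses. Proving that proposition, or the pairwise cancellation directly, is the missing step.

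For comparison, the paper supplies no argument at all for this lemma --- it is attributed to Iohara --- so there is no in-paper proof to match your attempt against; but judged as a self-contained proof, your proposal stops precisely at the hard point, and the honest route is the quasideterminant/telescoping one you named without executing.
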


Now set
\begin{align*}
&X_{i}^{-}(u)=f_{i+1,i}^{+}(u+\frac{1}{4}hc)-f_{i+1,i}^{-}(u-\frac{1}{4}hc),\\
&X_{i}^{+}(u)=e_{i,i+1}^{+}(u-\frac{1}{4}hc)-e_{i,i+1}^{-}(u+\frac{1}{4}hc).
\end{align*}
To decompose $\DY_{h}(\mathfrak{gl}_n)$ into a product of two subalgebras, we introduce the
following currents:
\begin{align*}
&H_{i}^{\pm}(u)=k_{i+1}^{\pm}(u+\frac{1}{2}hi)k_{i}^{\pm}(u+\frac{1}{2}hi)^{-1},K^{\pm}(u)=\prod_{i=1}^{n}k_{i}^{\pm}(u+(i-\frac{n+1}{2})h),\\
&E_{i}(u)=\frac{1}{h}X_{i}^{+}(u+\frac{1}{2}hi),F_{i}(u)=\frac{1}{h}X_{i}^{-}(u+\frac{1}{2}hi).
\end{align*}
We define $\DY_{h}(\mathfrak{sl}_n)$ to be the subalgebra of
$\DY_{h}(\mathfrak{gl}_n)$ generated by
$H_{i}^{\pm}(u),$\\$E_{i}(u),F_{i}(u)$ and $c$. Note that $\DY_{h}(\mathfrak{sl}_n)$ inherits the Hopf algebra structure (cf. \cite{C}) from that of $\DY_{h}(\mathfrak{gl}_n)$.
 The Heisenberg
subalgebra of $\DY_{h}(\mathfrak{gl}_n)$ generated by $K^{\pm}(u)$
commutes with each element of $\DY_{h}(\mathfrak{sl}_n)$. Next we will write down the relations between the currents
$k_{i}^{\pm}(u),X_{i}^{+}(u)$ and $X_{i}^{-}(u)$.

\begin{theorem}
The following relations hold in the algebra $\DY_{h}(\mathfrak{gl}_n)$:
$$k_{i}^{\pm}(u)k_{j}^{\pm}(v)=k_{j}^{\pm}(v)k_{i}^{\pm}(u)$$
$$\frac{u_{-}-v_{+}+h}{u_{-}-v_{+}}k_{i}^{+}(u)k_{i}^{-}(v)=\frac{u_{+}-v_{-}+h}{u_{+}-v_{-}}k_{i}^{-}(v)k_{i}^{+}(u)$$
$$k_{i}^{+}(u)k_{j}^{-}(v)=k_{j}^{-}(v)k_{i}^{+}(u)(i<j)$$
$$\frac{(u_{+}-v_{-})^{2}}{(u_{+}-v_{-})^{2}-h^{2}}k_{i}^{-}(u)k_{j}^{+}(v)=\frac{(u_{-}-v_{+})^{2}}{(u_{-}-v_{+})^{2}-h^{2}}k_{j}^{+}(v)k_{i}^{-}(u)(i<j)$$
$$k_{i}^{\pm}(u)^{-1}X_{i}^{+}(v)k_{i}^{\pm}(u)=\frac{u_{\pm}-v+h}{u_{\pm}-v}X_{i}^{+}(v)$$
$$k_{i}^{\pm}(u)X_{i}^{-}(v)k_{i}^{\pm}(u)^{-1}=\frac{u_{\mp}-v+h}{u_{\mp}-v}X_{i}^{-}(v)$$
$$k_{i+1}^{\pm}(u)^{-1}X_{i}^{+}(v)k_{i+1}^{\pm}(u)=\frac{u_{\pm}-v-h}{u_{\pm}-v}X_{i}^{+}(v)$$
$$k_{i+1}^{\pm}(u)X_{i}^{-}(v)k_{i+1}^{\pm}(u)^{-1}=\frac{u_{\mp}-v+h}{u_{\mp}-v}X_{i}^{-}(v)$$
$$k_{j}^{\pm}(u)^{-1}X_{i}^{+}(v)k_{j}^{\pm}(u)=X_{i}^{+}(v),k_{j}^{\pm}(u)X_{i}^{-}(v)k_{j}^{\pm}(u)^{-1}=X_{i}^{-}(v)
\quad (j\neq i,i+1)$$
$$(u-v\mp h)X_{i}^{\pm}(u)X_{i}^{\pm}(v)=(u-v\pm
h)X_{i}^{\pm}(v)X_{i}^{\pm}(u)$$
$$(u-v+h)X_{i}^{+}(u)X_{i+1}^{+}(v)=(u-v)X_{i+1}^{+}(v)X_{i}^{+}(u)$$
$$(u-v)X_{i}^{-}(u)X_{i+1}^{-}(v)=(u-v+h)X_{i+1}^{-}(v)X_{i}^{-}(u)$$
\begin{align*}
&X_{i}^{\pm}(u_{1})X_{i}^{\pm}(u_{2})X_{j}^{\pm}(v)-2X_{i}^{\pm}(u_{1})X_{j}^{\pm}(v)X_{i}^{\pm}(u_{2})+X_{j}^{\pm}(v)X_{i}^{\pm}(u_{1})X_{i}^{\pm}(u_{2})\\
&+\{u_{1}\leftrightarrow u_{2}\}=0 \quad if~ |i-j|=1
\end{align*}
$$X_{i}^{\pm}(u)X_{j}^{\pm}(v)=X_{j}^{\pm}(v)X_{i}^{\pm}(u)\quad if~
|i-j|>1$$
\begin{align*}
&[X_{i}^{+}(u),X_{j}^{-}(v)]=h\delta_{ij}\{\delta(u_{-}-v_{+})k_{i+1}^{+}(u_{-})k_{i}^{+}(u_{-})^{-1}-\delta(u_{+}-v_{-})k_{i+1}^{-}(v_{-})\\
&k_{i}^{-}(v_{-})^{-1}\} \quad
where ~\delta(u-v)=\Sigma_{k\in\mathbb{Z}}u^{-k-1}v^{k}.
\end{align*}
\end{theorem}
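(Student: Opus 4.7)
The plan is to derive the Drinfeld-type relations from the defining RLL equations \eqref{e:2.5}--\eqref{e:2.6} using the Gauss decomposition of Theorem 2.3, following the Yangian analogue of Ding--Frenkel's approach. Writing $L^{\pm}(u) = F^{\pm}(u) H^{\pm}(u) E^{\pm}(u)$ and conjugating the RLL equations by the triangular factors converts them into separate commutation identities among the Gaussian coordinates $f_{i+1,i}^{\pm}$, $k_{i}^{\pm}$ and $e_{i,i+1}^{\pm}$. Each individual relation in the theorem is then extracted by taking a specific matrix entry of the conjugated identity in $V \otimes V$ (or $V^{\otimes 3}$ for the three-index Serre and adjacency relations).

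I would first reduce to low rank. The submatrix embeddings $L_{p,q}^{\pm}$ from Definition 2.4 provide compatible inclusions $\DY_{h}(\mathfrak{gl}_{2}),\, \DY_{h}(\mathfrak{gl}_{3}) \hookrightarrow \DY_{h}(\mathfrak{gl}_{n})$, so all the relations involving at most three consecutive indices can be verified inside these small algebras, and the relations for $|i-j|>1$ follow because the corresponding Gaussian coordinates live in decoupled blocks of $L^{\pm}(u)$. The half-shifts $\pm\tfrac{1}{4}hc$ in the definitions of $E_{i}(u)$ and $F_{i}(u)$ are forced precisely so that the central shift $u-v\mp\tfrac{1}{2}hc$ in \eqref{e:2.6} is absorbed when extracting the $k_{i}^{\pm}X_{j}^{\pm}$ relations, giving them the clean rational form stated. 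The delta function in the $[X_{i}^{+}, X_{j}^{-}]$ identity then arises from the standard expansion
$$\delta(u-v) \;=\; (u-v)_{-}^{-1} + (v-u)_{+}^{-1},$$
which couples the $L^{+}$ and $L^{-}$ sectors once the simple pole of the $\bar R$-matrix at $u = v$ is localized and matched against the Gauss decomposition.

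The main obstacle is the cubic Serre relation for $|i-j|=1$, which is not visible as a single component of any RLL identity. I would handle it inside the $\DY_{h}(\mathfrak{gl}_{3})$ subalgebra, either by iterating the already-established quadratic $X_{i}^{\pm}X_{j}^{\pm}$ adjacency relations together with the Yang--Baxter equation $\bar R_{12}\bar R_{13}\bar R_{23} = \bar R_{23}\bar R_{13}\bar R_{12}$ and the unitarity $\bar R_{12}(u)\bar R_{21}(-u) = (u^{2}-h^{2})/u^{2}$, or by transporting the known Serre relation for the Yangian $\Y_{h}(\mathfrak{gl}_{n})$ to its double through the compatibility of the Gauss decomposition with the inclusion $\Y_{h}(\mathfrak{gl}_{n}) \hookrightarrow \DY_{h}(\mathfrak{gl}_{n})$ into positive (respectively negative) modes. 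Once the Serre identity is verified, the commutation between the Heisenberg currents $K^{\pm}(u)$ and $\DY_{h}(\mathfrak{sl}_{n})$ noted before the theorem is a routine telescoping computation: the shifted product $\prod_{i} k_{i}^{\pm}(u + (i - \tfrac{n+1}{2})h)$ is arranged precisely so that the rational factors in the $k_{j}^{\pm} X_{i}^{\pm}$ relations collapse to unity.
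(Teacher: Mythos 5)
Your proposal follows essentially the same route as the paper: conjugate the RLL relations \eqref{e:2.5}--\eqref{e:2.6} by the Gauss factors, extract each relation from specific matrix entries, reduce to the $n=2$ and $n=3$ cases via the upper-left and lower-right block embeddings, handle the $|i-j|>1$ relations through the corner entries, and obtain the cubic Serre identity by combining the already-established quadratic $X_i^{\pm}X_i^{\pm}$ and $X_i^{\pm}X_{i+1}^{\pm}$ relations (your first alternative; the Yang--Baxter/unitarity machinery and the transport from $\Y_h(\mathfrak{gl}_n)$ are not needed). The only cosmetic difference is that the paper verifies the far-apart relations by explicit computation with the corner entries of $L^{\pm}(u)$ and $L^{\pm}(u)^{-1}$ rather than by a decoupling argument, but the conclusion and mechanism are the same.
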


\begin{remark}
The relations in Theorem 2.5 are similar to those from Iohara's paper \cite{I}, though Iohara's paper did not contain its proof. We also note that the R-matrix $\bar R(u)$ in our paper is a bit different from the R-matrix $R(u)$ used in \cite{I}.
\end{remark}

\begin{proof}
Note that
\begin{align}
X_{i}^{+}(u)=e_{i,i+1}^{+}(u_{-})-e_{i,i+1}^{-}(u_{+}),X_{i}^{-}(u)=f_{i+1,i}^{+}(u_{+})-f_{i+1,i}^{-}(u_{-}),
\end{align}
where $u_{\pm}=u\pm \frac{1}{4}hc$. In the rest of the paper, we will use
$f_{i}^{\pm}(u)$ to denote $f_{i+1,i}^{\pm}(u)$ and $e_{i}^{\pm}(u)$
to denote $e_{i,i+1}^{\pm}(u)$. Since $k_{i}^{\pm}(u)$ are
invertible, we can express the elements $e_{i,j}^{\pm}(u),f_{j,i}^{\pm}(u)$ and
$k_{i}^{\pm}(u) (i<j)$ in terms of the matrix
coefficients of $L^{\pm}(u)$ in a unique manner.

We can express the inverse of the matrix $L^{\pm}(u)$ as follows:
\begin{align}
L^{\pm}(u)^{-1}=&\begin{pmatrix}
1&&&0\\-e_{1}^{\pm}(u)&\ddots\\\vdots&&\ddots\\*&\ldots&-e_{n-1}^{\pm}(u)&1
\end{pmatrix}
\begin{pmatrix}
k_{1}^{\pm}(u)^{-1}&&&0\\&\ddots\\&&\ddots\\0&&&k_{n}^{\pm}(u)^{-1}
\end{pmatrix}\nonumber\\
&\begin{pmatrix}
1&-f_{1}^{\pm}(u)&\ldots&*\\&\ddots\\&&\ddots&-f_{n-1}^{\pm}(u)\\0&&&1
\end{pmatrix}
\end{align}
The defining relations \eqref{e:2.5} and \eqref{e:2.6} imply the following relations:
\begin{align}\label{e:2.13}
L_{1}^{\pm}(v)^{-1}\bar R_{21}(u-v)L_{2}^{\pm}(u)=L_{2}^{\pm}(u)\bar
R_{21}(u-v)L_{1}^{\pm}(v)^{-1}\end{align}
\begin{align}\label{e:2.14}
L_{1}^{-}(v)^{-1}\bar
R_{21}(u_{-}-v_{+})L_{2}^{+}(u)=L_{2}^{+}(u)\bar
R_{21}(u_{+}-v_{-})L_{1}^{-}(v)^{-1}\end{align}
\begin{align}\label{e:2.15}
&\frac{(u_{+}-v_{-})^{2}}{(u_{+}-v_{-})^{2}-h^{2}}\bar R_{21}(u_{+}-v_{-})L_{2}^{-}(u)L_{1}^{+}(v)=\frac{(u_{-}-v_{+})^{2}}{(u_{-}-v_{+})^{2}-h^{2}}L_{1}^{+}(v)\\
&L_{2}^{-}(u)\bar R_{21}(u_{-}-v_{+})\nonumber
\end{align}
\begin{align}\label{e:2.16}
&\frac{(u_{+}-v_{-})^{2}}{(u_{+}-v_{-})^{2}-h^{2}}L_{1}^{+}(v)^{-1}\bar
R_{21}(u_{+}-v_{-})L_{2}^{-}(u)=\frac{(u_{-}-v_{+})^{2}}{(u_{-}-v_{+})^{2}-h^{2}}L_{2}^{-}(u)\\
&\bar R_{21}(u_{-}-v_{+})L_{1}^{+}(v)^{-1}\nonumber
\end{align}
\begin{align}\label{e:2.17}
L_{2}^{\pm}(u)^{-1}L_{1}^{\pm}(v)^{-1}\bar R_{21}(u-v)=\bar
R_{21}(u-v)L_{1}^{\pm}(v)^{-1}L_{2}^{\pm}(u)^{-1}\end{align}
\begin{align}\label{e:2.18}
L_{2}^{+}(u)^{-1}L_{1}^{-}(v)^{-1}\bar R_{21}(u_{-}-v_{+})=\bar
R_{21}(u_{+}-v_{-})L_{1}^{-}(v)^{-1}L_{2}^{+}(u)^{-1}\end{align}

First we consider the case $n=2$. By definition, we have the following formulas:
\begin{align}\label{e:2.19}
L^{\pm}(u)=\begin{pmatrix}
k_{1}^{\pm}(u)&k_{1}^{\pm}(u)e_{1}^{\pm}(u)\\f_{1}^{\pm}(u)k_{1}^{\pm}(u)&k_{2}^{\pm}(u)+f_{1}^{\pm}(u)k_{1}^{\pm}(u)e_{1}^{\pm}(u)
\end{pmatrix}
\end{align}
\begin{align}
L^{\pm}(u)^{-1}=\begin{pmatrix}
k_{1}^{\pm}(u)^{-1}+e_{1}^{\pm}(u)k_{2}^{\pm}(u)^{-1}f_{1}^{\pm}(u)&-e_{1}^{\pm}(u)k_{2}^{\pm}(u)^{-1}\\-k_{2}^{\pm}(u)^{-1}f_{1}^{\pm}(u)&k_{2}^{\pm}(u)^{-1}
\end{pmatrix}
\end{align}
\begin{align}
\bar R_{21}(u)=\bar R(u)=\begin{pmatrix}
\frac{u+h}{u}&0&0&0\\0&1&\frac{h}{u}&0\\0&\frac{h}{u}&1&0\\0&0&0&\frac{u+h}{u}
\end{pmatrix}
\end{align}

\begin{align}
L_{1}^{\pm}(v)^{-1}=\begin{pmatrix}
*&0&-e_{1}^{\pm}(v)k_{2}^{\pm}(v)^{-1}&0\\0&*&0&-e_{1}^{\pm}(v)k_{2}^{\pm}(v)^{-1}\\-k_{2}^{\pm}(v)^{-1}f_{1}^{\pm}(v)&0&k_{2}^{\pm}(v)^{-1}&0\\0&-k_{2}^{\pm}(v)^{-1}f_{1}^{\pm}(v)&0&k_{2}^{\pm}(v)^{-1}
\end{pmatrix}
\end{align}
\begin{align}
L_{2}^{\pm}(u)^{-1}=\begin{pmatrix}
*&-e_{1}^{\pm}(u)k_{2}^{\pm}(u)^{-1}&0&0\\-k_{2}^{\pm}(u)^{-1}f_{1}^{\pm}(u)&k_{2}^{\pm}(u)^{-1}&0&0\\0&0&*&-e_{1}^{\pm}(u)k_{2}^{\pm}(u)^{-1}\\0&0&-k_{2}^{\pm}(u)^{-1}f_{1}^{\pm}(u)&k_{2}^{\pm}(u)^{-1}
\end{pmatrix}
\end{align}

From \eqref{e:2.5} and \eqref{e:2.13}-\eqref{e:2.19}, we get all the relations
among $k_{1}^{\pm}(u),k_{2}^{\pm}(u)$.
\begin{align}
k_{1}^{\pm}(u)k_{1}^{\pm}(v)=k_{1}^{\pm}(v)k_{1}^{\pm}(u)\end{align}
\begin{align}
k_{2}^{\pm}(u)k_{2}^{\pm}(v)=k_{2}^{\pm}(v)k_{2}^{\pm}(u)\end{align}
\begin{align}
\frac{u_{-}-v_{+}+h}{u_{-}-v_{+}}k_{1}^{+}(u)k_{1}^{-}(v)=\frac{u_{+}-v_{-}+h}{u_{+}-v_{-}}k_{1}^{-}(v)k_{1}^{+}(u)\end{align}
\begin{align}
\frac{u_{-}-v_{+}+h}{u_{-}-v_{+}}k_{2}^{+}(u)k_{2}^{-}(v)=\frac{u_{+}-v_{-}+h}{u_{+}-v_{-}}k_{2}^{-}(v)k_{2}^{+}(u)\end{align}
\begin{align}
k_{1}^{\pm}(u)k_{2}^{\pm}(v)=k_{2}^{\pm}(v)k_{1}^{\pm}(u)\end{align}
\begin{align}
k_{2}^{-}(v)^{-1}k_{1}^{+}(u)=k_{1}^{+}(u)k_{2}^{-}(v)^{-1}\end{align}
\begin{align}
\frac{(u_{+}-v_{-})^{2}}{(u_{+}-v_{-})^{2}-h^{2}}k_{2}^{+}(v)^{-1}k_{1}^{-}(u)=\frac{(u_{-}-v_{+})^{2}}{(u_{-}-v_{+})^{2}-h^{2}}k_{1}^{-}(u)k_{2}^{+}(v)^{-1}\end{align}

Next we write down the relations between $k_{1}^{\pm}(u)$ and
$e_{1}^{\pm}(u)$ or $f_{1}^{\pm}(u)$.
\begin{align}
\frac{u-v+h}{u-v}k_{1}^{\pm}(u)k_{1}^{\pm}(v)e_{1}^{\pm}(v)=k_{1}^{\pm}(v)e_{1}^{\pm}(v)k_{1}^{\pm}(u)+\frac{h}{u-v}k_{1}^{\pm}(v)k_{1}^{\pm}(u)e_{1}^{\pm}(u)\end{align}
\begin{align}
\frac{u-v+h}{u-v}f_{1}^{\pm}(v)k_{1}^{\pm}(v)k_{1}^{\pm}(u)=\frac{h}{u-v}f_{1}^{\pm}(u)k_{1}^{\pm}(u)k_{1}^{\pm}(v)+k_{1}^{\pm}(u)f_{1}^{\pm}(v)k_{1}^{\pm}(v)\end{align}
\begin{align}
\frac{u_{-}-v_{+}+h}{u_{-}-v_{+}}k_{1}^{+}(u)k_{1}^{-}(v)e_{1}^{-}(v)=k_{1}^{-}(v)e_{1}^{-}(v)k_{1}^{+}(u)+\frac{h}{u_{+}-v_{-}}k_{1}^{-}(v)k_{1}^{+}(u)e_{1}^{+}(u)\end{align}
\begin{align}
\frac{u_{+}-v_{-}+h}{u_{+}-v_{-}}f_{1}^{-}(v)k_{1}^{-}(v)k_{1}^{+}(u)=\frac{h}{u_{-}-v_{+}}f_{1}^{+}(u)k_{1}^{+}(u)k_{1}^{-}(v)+k_{1}^{+}(u)f_{1}^{-}(v)k_{1}^{-}(v)\end{align}

Thus
\begin{align}\label{e:2.35}
k_{1}^{\pm}(u)^{-1}e_{1}^{\pm}(v)k_{1}^{\pm}(u)=\frac{u-v+h}{u-v}e_{1}^{\pm}(v)-\frac{h}{u-v}e_{1}^{\pm}(u)\end{align}
\begin{align}\label{e:2.36}
k_{1}^{\pm}(u)f_{1}^{\pm}(v)k_{1}^{\pm}(u)^{-1}=\frac{u-v+h}{u-v}f_{1}^{\pm}(v)-\frac{h}{u-v}f_{1}^{\pm}(u)\end{align}
\begin{align}\label{e:2.37}
k_{1}^{+}(u)^{-1}e_{1}^{-}(v)k_{1}^{+}(u)=\frac{u_{+}-v_{-}+h}{u_{+}-v_{-}}e_{1}^{-}(v)-\frac{h}{u_{+}-v_{-}}e_{1}^{+}(u)\end{align}
\begin{align}
k_{1}^{+}(u)f_{1}^{-}(v)k_{1}^{+}(u)^{-1}=\frac{u_{-}-v_{+}+h}{u_{-}-v_{+}}f_{1}^{-}(v)-\frac{h}{u_{-}-v_{+}}f_{1}^{+}(u)\end{align}

Similarly we have
\begin{align}
k_{1}^{-}(u)^{-1}e_{1}^{+}(v)k_{1}^{-}(u)=\frac{u_{-}-v_{+}+h}{u_{-}-v_{+}}e_{1}^{+}(v)-\frac{h}{u_{-}-v_{+}}e_{1}^{-}(u)\end{align}
\begin{align}
k_{1}^{-}(u)f_{1}^{+}(v)k_{1}^{-}(u)^{-1}=\frac{u_{+}-v_{-}+h}{u_{+}-v_{-}}f_{1}^{+}(v)-\frac{h}{u_{+}-v_{-}}f_{1}^{-}(u)\end{align}

Then we have
\begin{align}
k_{1}^{\pm}(u)^{-1}e_{1}^{\mp}(v)k_{1}^{\pm}(u)&=\frac{u_{\pm}-v_{\mp}+h}{u_{\pm}-v_{\mp}}e_{1}^{\mp}(v)-\frac{h}{u_{\pm}-v_{\mp}}e_{1}^{\pm}(u), \\
k_{1}^{\pm}(u)f_{1}^{\mp}(v)k_{1}^{\pm}(u)^{-1}&=\frac{u_{\mp}-v_{\pm}+h}{u_{\mp}-v_{\pm}}f_{1}^{\mp}(v)-\frac{h}{u_{\mp}-v_{\pm}}f_{1}^{\pm}(u).
\end{align}

Since
$X_{1}^{+}(u)=e_{1}^{+}(u_{-})-e_{1}^{-}(u_{+}),X_{1}^{-}(u)=f_{1}^{+}(u_{+})-f_{1}^{-}(u_{-})$,
so
\begin{align}
k_{1}^{\pm}(u)^{-1}X_{1}^{+}(v)k_{1}^{\pm}(u)&=\frac{u_{\pm}-v+h}{u_{\pm}-v}X_{1}^{+}(v),\\
k_{1}^{\pm}(u)X_{1}^{-}(v)k_{1}^{\pm}(u)^{-1}&=\frac{u_{\mp}-v+h}{u_{\mp}-v}X_{1}^{-}(v).
\end{align}

Then we obtain the relations among
$f_{1}^{\pm}(u),f_{2}^{\pm}(v)$ and $e_{1}^{\pm}(u),e_{2}^{\pm}(v)$.
\begin{align}\label{e:2.45}
k_{1}^{\pm}(u)e_{1}^{\pm}(u)k_{1}^{\pm}(v)e_{1}^{\pm}(v)&=k_{1}^{\pm}(v)e_{1}^{\pm}(v)k_{1}^{\pm}(u)e_{1}^{\pm}(u), \\ \label{e:2.46}
f_{1}^{\pm}(v)k_{1}^{\pm}(v)f_{1}^{\pm}(u)k_{1}^{\pm}(u)&=f_{1}^{\pm}(u)k_{1}^{\pm}(u)f_{1}^{\pm}(v)k_{1}^{\pm}(v), \end{align}
\begin{align}\label{e:2.47}
\frac{u_{-}-v_{+}+h}{u_{-}-v_{+}}k_{1}^{+}(u)e_{1}^{+}(u)k_{1}^{-}(v)e_{1}^{-}(v)=\frac{u_{+}
-v_{-}+h}{u_{+}-v_{-}}k_{1}^{-}(v)e_{1}^{-}(v)k_{1}^{+}(u)e_{1}^{+}(u),\end{align}
\begin{align}
\frac{u_{-}-v_{+}+h}{u_{-}-v_{+}}f_{1}^{+}(u)k_{1}^{+}(u)f_{1}^{-}(v)k_{1}^{-}(v)=\frac{u_{+}-v_{-}+h}{u_{+}
-v_{-}}f_{1}^{-}(v)k_{1}^{-}(v)f_{1}^{+}(u)k_{1}^{+}(u). \end{align}

From \eqref{e:2.35} and \eqref{e:2.45}, we get
\begin{multline}
\frac{u-v+h}{u-v}e_{1}^{\pm}(v)e_{1}^{\pm}(u)-\frac{h}{u-v}e_{1}^{\pm}(u)e_{1}^{\pm}(u)\\=\frac{v-u+h}{v-u}e_{1}^{\pm}(u)e_{1}^{\pm}(v)-\frac{h}{v-u}e_{1}^{\pm}(v)e_{1}^{\pm}(v).\end{multline}
From \eqref{e:2.37} and \eqref{e:2.47}, we get
\begin{multline}
\frac{u_{+}-v_{-}+h}{u_{+}-v_{-}}e_{1}^{-}(v)e_{1}^{+}(u)-\frac{h}{u_{+}-v_{-}}e_{1}^{+}(u)e_{1}^{+}(u)\\
=\frac{v_{-}-u_{+}+h}{v_{-}-u_{+}}e_{1}^{+}(u)e_{1}^{-}(v)-\frac{h}{v_{-}-u_{+}}e_{1}^{-}(v)e_{1}^{-}(v).\end{multline}
Similarly, we can prove
\begin{multline}
\frac{u_{-}-v_{+}+h}{u_{-}-v_{+}}e_{1}^{+}(v)e_{1}^{-}(u)-\frac{h}{u_{-}-v_{+}}e_{1}^{-}(u)e_{1}^{-}(u)\\
=\frac{v_{+}-u_{-}+h}{v_{+}-u_{-}}e_{1}^{-}(u)e_{1}^{+}(v)-\frac{h}{v_{+}-u_{-}}e_{1}^{+}(v)e_{1}^{+}(v).\end{multline}

Therefore,
\begin{multline}
\frac{u_{\pm}-v_{\mp}+h}{u_{\pm}-v_{\mp}}e_{1}^{\mp}(v)e_{1}^{\pm}(u)-\frac{h}{u_{\pm}-v_{\mp}}e_{1}^{\pm}(u)e_{1}^{\pm}(u)\\
=\frac{v_{\mp}-u_{\pm}+h}{v_{\mp}-u_{\pm}}e_{1}^{\pm}(u)e_{1}^{\mp}(v)-\frac{h}{v_{\mp}-u_{\pm}}e_{1}^{\mp}(v)e_{1}^{\mp}(v).\end{multline}

From \eqref{e:2.36} and \eqref{e:2.46}, we get
\begin{multline}
\frac{u-v+h}{u-v}f_{1}^{\pm}(u)f_{1}^{\pm}(v)-\frac{h}{u-v}f_{1}^{\pm}(u)f_{1}^{\pm}(u)\\
=\frac{v-u+h}{v-u}f_{1}^{\pm}(v)f_{1}^{\pm}(u)-\frac{h}{v-u}f_{1}^{\pm}(v)f_{1}^{\pm}(v).\end{multline}

From (2.38) and (2.48), we get
\begin{multline}
\frac{u_{-}-v_{+}+h}{u_{-}-v_{+}}f_{1}^{+}(u)f_{1}^{-}(v)-\frac{h}{u_{-}-v_{+}}f_{1}^{+}(u)f_{1}^{+}(u)\\
=\frac{v_{+}-u_{-}+h}{v_{+}-u_{-}}f_{1}^{-}(v)f_{1}^{+}(u)-\frac{h}{v_{+}-u_{-}}f_{1}^{-}(v)f_{1}^{-}(v).\end{multline}

Similarly, we can prove
\begin{multline}
\frac{u_{+}-v_{-}+h}{u_{+}-v_{-}}f_{1}^{-}(u)f_{1}^{+}(v)-\frac{h}{u_{+}-v_{-}}f_{1}^{-}(u)f_{1}^{-}(u)\\
=\frac{v_{-}-u_{+}+h}{v_{-}-u_{+}}f_{1}^{+}(v)f_{1}^{-}(u)-\frac{h}{v_{-}-u_{+}}f_{1}^{+}(v)f_{1}^{+}(v).\end{multline}

Hence,
\begin{multline}
\frac{u_{\mp}-v_{\pm}+h}{u_{\mp}-v_{\pm}}f_{1}^{\pm}(u)f_{1}^{\mp}(v)-\frac{h}{u_{\mp}-v_{\pm}}f_{1}^{\pm}(u)f_{1}^{\pm}(u)\\
=\frac{v_{\pm}-u_{\mp}+h}{v_{\pm}-u_{\mp}}f_{1}^{\mp}(v)f_{1}^{\pm}(u)-\frac{h}{v_{\pm}-u_{\mp}}f_{1}^{\mp}(v)f_{1}^{\mp}(v).\end{multline}

From (2.49) and (2.52), we get
\begin{align}
(u-v-h)X_{1}^{+}(u)X_{1}^{+}(v)=(u-v+h)X_{1}^{+}(v)X_{1}^{+}(u).\end{align}

From (2.53) and (2.56), we get
\begin{align}
(u-v+h)X_{1}^{-}(u)X_{1}^{-}(v)=(u-v-h)X_{1}^{-}(v)X_{1}^{-}(u).\end{align}

The relations between $f_{1}^{\pm}(u),e_{1}^{\pm}(u)$ and
$k_{2}^{\pm}(u)$ are:
\begin{multline}
-\frac{u-v+h}{u-v}e_{1}^{\pm}(u)k_{2}^{\pm}(u)^{-1}k_{2}^{\pm}(v)^{-1}\\
=-k_{2}^{\pm}(v)^{-1}e_{1}^{\pm}(u)k_{2}^{\pm}(u)^{-1}-\frac{h}{u-v}e_{1}^{\pm}(v)k_{2}^{\pm}(v)^{-1}k_{2}^{\pm}(u)^{-1},\end{multline}
\begin{multline}
-\frac{u-v+h}{u-v}k_{2}^{\pm}(v)^{-1}k_{2}^{\pm}(u)^{-1}f_{1}^{\pm}(u)\\
=-k_{2}^{\pm}(u)^{-1}f_{1}^{\pm}(u)k_{2}^{\pm}(v)^{-1}-\frac{h}{u-v}k_{2}^{\pm}(u)^{-1}k_{2}^{\pm}(v)^{-1}f_{1}^{\pm}(v),\end{multline}
\begin{multline}
-\frac{u_{-}-v_{+}+h}{u_{-}-v_{+}}e_{1}^{+}(u)k_{2}^{+}(u)^{-1}k_{2}^{-}(v)^{-1}\\
=-k_{2}^{-}(v)^{-1}e_{1}^{+}(u)k_{2}^{+}(u)^{-1}-\frac{h}{u_{+}-v_{-}}e_{1}^{-}(v)k_{2}^{-}(v)^{-1}k_{2}^{+}(u)^{-1},\end{multline}
\begin{multline}
-\frac{u_{+}-v_{-}+h}{u_{+}-v_{-}}k_{2}^{-}(v)^{-1}k_{2}^{+}(u)^{-1}f_{1}^{+}(u)\\
=-k_{2}^{+}(u)^{-1}f_{1}^{+}(u)k_{2}^{-}(v)^{-1}-\frac{h}{u_{-}-v_{+}}k_{2}^{+}(u)^{-1}k_{2}^{-}(v)^{-1}f_{1}^{-}(v).\end{multline}

Then
\begin{align}\label{e:2.63}
k_{2}^{\pm}(v)^{-1}e_{1}^{\pm}(u)k_{2}^{\pm}(v)&=\frac{u-v+h}{u-v}e_{1}^{\pm}(u)-\frac{h}{u-v}e_{1}^{\pm}(v),\\ \label{e:2.64}
k_{2}^{\pm}(v)f_{1}^{\pm}(u)k_{2}^{\pm}(v)^{-1}&=\frac{u-v+h}{u-v}f_{1}^{\pm}(u)-\frac{h}{u-v}f_{1}^{\pm}(v),\\
k_{2}^{-}(v)^{-1}e_{1}^{+}(u)k_{2}^{-}(v)&=\frac{u_{+}-v_{-}+h}{u_{+}-v_{-}}e_{1}^{+}(u)-\frac{h}{u_{+}-v_{-}}e_{1}^{-}(v),\\
k_{2}^{-}(v)f_{1}^{+}(u)k_{2}^{-}(v)^{-1}&=\frac{u_{-}-v_{+}+h}{u_{-}-v_{+}}f_{1}^{+}(u)-\frac{h}{u_{-}-v_{+}}f_{1}^{-}(v).\end{align}

Similarly, we can prove
\begin{align}
k_{2}^{+}(v)^{-1}e_{1}^{-}(u)k_{2}^{+}(v)&=\frac{u_{-}-v_{+}+h}{u_{-}-v_{+}}e_{1}^{-}(u)-\frac{h}{u_{-}-v_{+}}e_{1}^{+}(v), \\
k_{2}^{+}(v)f_{1}^{-}(u)k_{2}^{+}(v)^{-1}&=\frac{u_{+}-v_{-}+h}{u_{+}-v_{-}}f_{1}^{-}(u)-\frac{h}{u_{+}-v_{-}}f_{1}^{+}(v). \end{align}

So we have
\begin{align}\label{e:2.69}
k_{2}^{\mp}(v)^{-1}e_{1}^{\pm}(u)k_{2}^{\mp}(v)&=\frac{u_{\pm}-v_{\mp}+h}{u_{\pm}-v_{\mp}}e_{1}^{\pm}(u)-\frac{h}{u_{\pm}-v_{\mp}}e_{1}^{\mp}(v), \\ \label{e:2.70}
k_{2}^{\mp}(v)f_{1}^{\pm}(u)k_{2}^{\mp}(v)^{-1}&=\frac{u_{\mp}-v_{\pm}+h}{u_{\mp}-v_{\pm}}f_{1}^{\pm}(u)-\frac{h}{u_{\mp}-v_{\pm}}f_{1}^{\mp}(v).\end{align}

From \eqref{e:2.63} and \eqref{e:2.69}, we get
\begin{align}
k_{2}^{\pm}(u)^{-1}X_{1}^{+}(v)k_{2}^{\pm}(u)=\frac{u_{\pm}-v+h}{u_{\pm}-v}X_{1}^{+}(v).\end{align}
From \eqref{e:2.64} and \eqref{e:2.70}, we get
\begin{align}
k_{2}^{\pm}(u)X_{1}^{-}(v)k_{2}^{\pm}(u)^{-1}=\frac{u_{\mp}-v+h}{u_{\mp}-v}X_{1}^{-}(v).\end{align}

The relations between
$k_{1}^{\pm}(u),k_{2}^{\pm}(u),e_{1}^{\pm}(u),f_{1}^{\pm}(u)$ are:
\begin{multline}
\frac{h}{u-v}(k_{2}^{\pm}(v)+f_{1}^{\pm}(v)k_{1}^{\pm}(v)e_{1}^{\pm}(v))k_{1}^{\pm}(u)+f_{1}^{\pm}(v)k_{1}^{\pm}(v)k_{1}^{\pm}(u)e_{1}^{\pm}(u)\\
=\frac{h}{u-v}(k_{2}^{\pm}(u)+f_{1}^{\pm}(u)k_{1}^{\pm}(u)e_{1}^{\pm}(u))k_{1}^{\pm}(v)+k_{1}^{\pm}(u)e_{1}^{\pm}(u)f_{1}^{\pm}(v)k_{1}^{\pm}(v),\end{multline}
\begin{multline}
\frac{h}{u_{+}-v_{-}}(k_{2}^{-}(v)+f_{1}^{-}(v)k_{1}^{-}(v)e_{1}^{-}(v))k_{1}^{+}(u)+f_{1}^{-}(v)k_{1}^{-}(v)k_{1}^{+}(u)e_{1}^{+}(u)\\
=\frac{h}{u_{-}-v_{+}}(k_{2}^{+}(u)+f_{1}^{+}(u)k_{1}^{+}(u)e_{1}^{+}(u))k_{1}^{-}(v)+k_{1}^{+}(u)e_{1}^{+}(u)f_{1}^{-}(v)k_{1}^{-}(v).\end{multline}
Therefore,\begin{align*}
&\frac{h}{u-v}(k_{2}^{\pm}(v)k_{1}^{\pm}(u)-k_{2}^{\pm}(u)k_{1}^{\pm}(v))\\
=&\frac{h}{u-v}f_{1}^{\pm}(u)k_{1}^{\pm}(u)e_{1}^{\pm}(u)k_{1}^{\pm}(v)+
k_{1}^{\pm}(u)e_{1}^{\pm}(u)f_{1}^{\pm}(v)k_{1}^{\pm}(v)\\
&-f_{1}^{\pm}(v)(\frac{h}{u-v}k_{1}^{\pm}(v)e_{1}^{\pm}(v)k_{1}^{\pm}(u)+k_{1}^{\pm}(v)k_{1}^{\pm}(u)e_{1}^{\pm}(u))\\
=&\frac{h}{u-v}f_{1}^{\pm}(u)k_{1}^{\pm}(u)e_{1}^{\pm}(u)k_{1}^{\pm}(v)+k_{1}^{\pm}(u)e_{1}^{\pm}(u)f_{1}^{\pm}(v)k_{1}^{\pm}(v)-\frac{u-v+h}{u-v}\\
&f_{1}^{\pm}(v)k_{1}^{\pm}(u)e_{1}^{\pm}(u)k_{1}^{\pm}(v)\\
=&(\frac{h}{u-v}f_{1}^{\pm}(u)k_{1}^{\pm}(u)-\frac{u-v+h}{u-v}f_{1}^{\pm}(v)k_{1}^{\pm}(u))e_{1}^{\pm}(u)k_{1}^{\pm}(v)\\
&+k_{1}^{\pm}(u)e_{1}^{\pm}(u)f_{1}^{\pm}(v)k_{1}^{\pm}(v)\\
=&-k_{1}^{\pm}(u)f_{1}^{\pm}(v)e_{1}^{\pm}(u)k_{1}^{\pm}(v)+k_{1}^{\pm}(u)e_{1}^{\pm}(u)f_{1}^{\pm}(v)k_{1}^{\pm}(v)\\
=&k_{1}^{\pm}(u)[e_{1}^{\pm}(u),f_{1}^{\pm}(v)]k_{1}^{\pm}(v).
\end{align*}
So
\begin{align}
[e_{1}^{\pm}(u),f_{1}^{\pm}(v)]=\frac{h}{u-v}(k_{2}^{\pm}(v)k_{1}^{\pm}(v)^{-1}-k_{2}^{\pm}(u)k_{1}^{\pm}(u)^{-1}).\end{align}
Similarly, we can prove
\begin{align}
[e_{1}^{\pm}(u),f_{1}^{\mp}(v)]=\frac{h}{u_{\pm}-v_{\mp}}k_{2}^{\mp}(v)k_{1}^{\mp}(v)^{-1}-\frac{h}{u_{\mp}-v_{\pm}}k_{2}^{\pm}(u)k_{1}^{\pm}(u)^{-1}.\end{align}
Here the denominators are power series in $\frac{v}{u}$ and
$\frac{u}{v}$ respectively. Since
$X_{1}^{+}(u)=e_{1}^{+}(u_{-})-e_{1}^{-}(u_{+}),X_{1}^{-}(v)=f_{1}^{+}(v_{+})-f_{1}^{-}(v_{-})$,
\begin{multline}
[X_{1}^{+}(u),X_{1}^{-}(v)]=h\{\delta(u_{-}-v_{+})k_{2}^{+}(u_{-})k_{1}^{+}(u_{-})^{-1}\\
+\delta(u_{+}-v_{-})k_{2}^{-}(v_{-})k_{1}^{-}(v_{-})^{-1}\}.\end{multline}

Now we consider the case $n=3$. In this section, we will denote $\bar R(u)$
by $\bar R_{n}(u)$ for referring dimension $n$. Let us restrict
\eqref{e:2.5} and \eqref{e:2.6} to $e_{ij}\otimes e_{kl},i,j,k,l\leq 2$, then we get
\begin{align}
\bar R_{2}(u-v)J_1^{\pm}(u)J_2^{\pm}(v)&=J_2^{\pm}(v)J_1^{\pm}(u)\bar
R_{2}(u-v), \\
\bar R_{2}(u_{-}-v_{+})J_1^{+}(u)J_2^{-}(v)&=J_2^{-}(v)J_1^{+}(u)\bar
R_{2}(u_{+}-v_{-}), \end{align}
where we have denoted
\begin{align}
J^{\pm}(u)=\begin{pmatrix} 1&0\\f_{1}^{\pm}(u)&1
\end{pmatrix}
\begin{pmatrix} k_{1}^{\pm}(u)&0\\0&k_{2}^{\pm}(u)
\end{pmatrix}
\begin{pmatrix} 1&e_{1}^{\pm}(u)\\0&1
\end{pmatrix}.
\end{align}
Thus we derive the same relations as in the case $n=2$. Similarly consider
\eqref{e:2.5} and \eqref{e:2.6} and restrict them to $e_{ij}\otimes e_{kl},2\leq
i,j,k,l\leq 3$, then we have
\begin{align}
\bar
R_{2}(u-v)\tilde{J}_1^{\pm}(u)\tilde{J}_2^{\pm}(v)&=\tilde{J}_2^{\pm}(v)\tilde{J}_1^{\pm}(u)\bar
R_{2}(u-v), \\
\bar
R_{2}(u_{-}-v_{+})\tilde{J}_1^{+}(u)\tilde{J}_2^{-}(v)&=\tilde{J}_2^{-}(v)\tilde{J}_1^{+}(u)\bar
R_{2}(u_{+}-v_{-})\end{align} where we have denoted
\begin{align}
\tilde{J}^{\pm}(u)=\begin{pmatrix} 1&0\\f_{2}^{\pm}(u)&1
\end{pmatrix}
\begin{pmatrix} k_{2}^{\pm}(u)&0\\0&k_{3}^{\pm}(u)
\end{pmatrix}
\begin{pmatrix} 1&e_{2}^{\pm}(u)\\0&1
\end{pmatrix}
\end{align}

It is also the same as in $n=2$ case. Thus we only need to verify the
relations between $k_{3}^{\pm}(u),f_{1}^{\pm}(u),e_{1}^{\pm}(u)$ and
$k_{3}^{\pm}(u),f_{2}^{\pm}(u),e_{2}^{\pm}(u)$. The other relations
can be deduced from the results for $n=2$ case.

By the Gauss decomposition of $L^{\pm}(u)$, we have
\begin{align}
L^{\pm}(u)=&\begin{pmatrix}
1&0&0\\f_{1}^{\pm}(u)&1&0\\f_{3,1}^{\pm}(u)&f_{2}^{\pm}(u)&1
\end{pmatrix}
\begin{pmatrix}
k_{1}^{\pm}(u)&0&0\\0&k_{2}^{\pm}(u)&0\\0&0&k_{3}^{\pm}(u)
\end{pmatrix}
\begin{pmatrix}
1&e_{1}^{\pm}(u)&e_{1,3}^{\pm}(u)\\0&1&e_{2}^{\pm}(u)\\0&0&1
\end{pmatrix}\\
=&\begin{pmatrix}
k_{1}^{\pm}(u)&k_{1}^{\pm}(u)e_{1}^{\pm}(u)&k_{1}^{\pm}(u)e_{1,3}^{\pm}(u)\\f_{1}^{\pm}(u)k_{1}^{\pm}(u)&*&*\\f_{3,1}^{\pm}(u)k_{1}^{\pm}(u)&*&*
\end{pmatrix}
\end{align}

Let
$x^{\pm}=k_{3}^{\pm}(v)^{-1}(-f_{3,1}^{\pm}(v)+f_{2}^{\pm}(v)f_{1}^{\pm}(v)),y^{\pm}=(-e_{1,3}^{\pm}(v)+e_{1}^{\pm}(v)e_{2}^{\pm}(v))$\\
$k_{3}^{\pm}(v)^{-1}$, then
\begin{align}\label{e:2.86}
L_{1}^{\pm}(v)^{-1}=\begin{pmatrix}
*&*&y^{\pm}I\\*&*&-e_{2}^{\pm}(v)k_{3}^{\pm}(v)^{-1}I\\x^{\pm}I&-k_{3}^{\pm}(v)^{-1}f_{2}^{\pm}(v)I&k_{3}^{\pm}(v)^{-1}I
\end{pmatrix}
\end{align}

Now we start to check the relations between $f_{1}^{\pm}(u)$ and
$f_{2}^{\pm}(u)$, and the relations between $e_{1}^{\pm}(u)$ and
$e_{2}^{\pm}(u)$.

From \eqref{e:2.14}, \eqref{e:2.15}, \eqref{e:2.16} and  \eqref{e:2.86}, we have
\begin{multline}\label{e:2.87}
\frac{h}{u-v}x^{\pm}k_{1}^{\pm}(u)-\frac{u-v+h}{u-v}k_{3}^{\pm}(v)^{-1}f_{2}^{\pm}(v)f_{1}^{\pm}(u)k_{1}^{\pm}(u)\\
+\frac{h}{u-v}k_{3}^{\pm}(v)^{-1}f_{3,1}^{\pm}(u)k_{1}^{\pm}(u)=
-f_{1}^{\pm}(u)k_{1}^{\pm}(u)k_{3}^{\pm}(v)^{-1}f_{2}^{\pm}(v)\end{multline}
\begin{multline}\label{e:2.88}
\frac{h}{u_{-}-v_{+}}x^{-}k_{1}^{+}(u)-\frac{u_{-}-v_{+}+h}{u_{-}-v_{+}}k_{3}^{-}(v)^{-1}f_{2}^{-}(v)f_{1}^{+}(u)k_{1}^{+}(u)\\
+\frac{h}{u_{-}-v_{+}}k_{3}^{-}(v)^{-1}f_{3,1}^{+}(u)k_{1}^{+}(u)=
-f_{1}^{+}(u)k_{1}^{+}(u)k_{3}^{-}(v)^{-1}f_{2}^{-}(v)\end{multline}

We multiply  \eqref{e:2.87} by $k_{3}^{\pm}(v)$ on the left side, and
$k_{1}^{\pm}(u)^{-1}$ on the right side, and  \eqref{e:2.88} by
$k_{3}^{-}(v)$ on the left, and by $k_{1}^{+}(u)^{-1}$ on the
right.\\Since
$k_{3}^{\pm}(v)^{-1}k_{1}^{\pm}(u)=k_{1}^{\pm}(u)k_{3}^{\pm}(v)^{-1}$
and
$k_{3}^{-}(v)^{-1}k_{1}^{+}(u)=k_{1}^{+}(u)k_{3}^{-}(v)^{-1}$,
then we have
\begin{multline}\label{e:2.89}
(u-v)f_{1}^{\pm}(u)f_{2}^{\pm}(v)=(u-v+h)f_{2}^{\pm}(v)f_{1}^{\pm}(u)\\
+h(f_{3,1}^{\pm}(v)-f_{3,1}^{\pm}(u)-f_{2}^{\pm}(v)f_{1}^{\pm}(v)),\end{multline}
\begin{multline}\label{e:2.90}
(u_{-}-v_{+})f_{1}^{+}(u)f_{2}^{-}(v)=(u_{-}-v_{+}+h)f_{2}^{-}(v)f_{1}^{+}(u)\\
+h(f_{3,1}^{-}(v)-f_{3,1}^{+}(u)-f_{2}^{-}(v)f_{1}^{-}(v)).\end{multline}
Similarly, we can prove
\begin{multline}
(u_{+}-v_{-})f_{1}^{-}(u)f_{2}^{+}(v)=(u_{+}-v_{-}+h)f_{2}^{+}(v)f_{1}^{-}(u)\\
+h(f_{3,1}^{+}(v)-f_{3,1}^{-}(u)-f_{2}^{+}(v)f_{1}^{+}(v)).\end{multline}
So we have
\begin{multline}
(u_{\mp}-v_{\pm})f_{1}^{\pm}(u)f_{2}^{\mp}(v)=(u_{\mp}-v_{\pm}+h)f_{2}^{\mp}(v)f_{1}^{\pm}(u)\\
+h(f_{3,1}^{\mp}(v)-f_{3,1}^{\pm}(u)-f_{2}^{\mp}(v)f_{1}^{\mp}(v)).\end{multline}

From \eqref{e:2.89}, we get
\begin{multline}\label{e:2.93}
(u-v)f_{1}^{\pm}(u_{\pm})f_{2}^{\pm}(v_{\pm})=(u-v+h)f_{2}^{\pm}(v_{\pm})f_{1}^{\pm}(u_{\pm})\\
+h(f_{3,1}^{\pm}(v_{\pm})-f_{3,1}^{\pm}(u_{\pm})-f_{2}^{\pm}(v_{\pm})f_{1}^{\pm}(v_{\pm})).\end{multline}

From \eqref{e:2.90}, we get
\begin{multline}\label{e:2.94}
(u-v)f_{1}^{\pm}(u_{\pm})f_{2}^{\mp}(v_{\mp})=(u-v+h)f_{2}^{\mp}(v_{\mp})f_{1}^{\pm}(u_{\pm})\\
+h(f_{3,1}^{\mp}(v_{\mp})-f_{3,1}^{\pm}(u_{\pm})-f_{2}^{\mp}(v_{\mp})f_{1}^{\mp}(v_{\mp})).\end{multline}
From \eqref{e:2.93}-\eqref{e:2.94}, we get
\begin{align}\label{e:2.95}
(u-v)X_{1}^{-}(u)X_{2}^{-}(v)=X_{2}^{-}(v)X_{1}^{-}(u)(u-v+h).\end{align}

As for $e_{1}^{\pm}(u),e_{2}^{\pm}(u)$, we have
\begin{multline}
-e_{2}^{\pm}(v)k_{3}^{\pm}(v)^{-1}k_{1}^{\pm}(u)e_{1}^{\pm}(u)=\frac{h}{u-v}k_{1}^{\pm}(u)y^{\pm}\\
-\frac{u-v}{u-v+h}k_{1}^{\pm}(u)e_{1}^{\pm}(u)e_{2}^{\pm}(v)k_{3}^{\pm}(v)^{-1}+
\frac{h}{u-v}k_{1}^{\pm}(u)e_{1,3}^{\pm}(u)k_{3}^{\pm}(v)^{-1}\end{multline}
\begin{multline}
-e_{2}^{-}(v)k_{3}^{-}(v)^{-1}k_{1}^{+}(u)e_{1}^{+}(u)=\frac{h}{u_{+}-v_{-}}k_{1}^{+}(u)y^{-}\\
-\frac{u_{+}-v_{-}+h}{u_{+}-v_{-}}k_{1}^{+}(u)e_{1}^{+}(u)e_{2}^{-}(v)k_{3}^{-}(v)^{-1}+
\frac{h}{u_{+}-v_{-}}k_{1}^{+}(u)e_{1,3}^{+}(u)k_{3}^{-}(v)^{-1}\end{multline}
Then we get
\begin{multline}\label{e:2.98}
(u-v)e_{2}^{\pm}(v_{\mp})e_{1}^{\pm}(u_{\mp})=(u-v+h)e_{1}^{\pm}(u_{\mp})e_{2}^{\pm}(v_{\mp})\\
+h(e_{1,3}^{\pm}(v_{\mp})-e_{1,3}^{\pm}(u_{\mp})-e_{1}^{\pm}(v_{\mp})e_{2}^{\pm}(v_{\mp}))\end{multline}
\begin{multline}\label{e:2.99}
(u-v)e_{2}^{\mp}(v_{\pm})e_{1}^{\pm}(u_{\mp})=(u-v+h)e_{1}^{\pm}(u_{\mp})e_{2}^{\mp}(v_{\pm})\\
+h(e_{1,3}^{\mp}(v_{\pm})-e_{1,3}^{\pm}(u_{\mp})-e_{1}^{\mp}(v_{\pm})e_{2}^{\mp}(v_{\pm}))\end{multline}

From \eqref{e:2.98}-\eqref{e:2.99}, we get
\begin{align}\label{e:2.100}
(u-v)X_{2}^{+}(v)X_{1}^{+}(u)=X_{1}^{+}(u)X_{2}^{+}(v)(u-v+h).\end{align}

From \eqref{e:2.95} and
$(u-v+h)X_{1}^{-}(u)X_{1}^{-}(v)=X_{1}^{-}(v)X_{1}^{-}(u)(u-v-h)$,\\
we get
\begin{multline}
X_{1}^{-}(u_{1})X_{1}^{-}(u_{2})X_{2}^{-}(v)-2X_{1}^{-}(u_{1})X_{2}^{-}(v)X_{1}^{-}(u_{2})\\
+X_{2}^{-}(v)X_{1}^{-}(u_{1})X_{1}^{-}(u_{2})+\{u_{1}\leftrightarrow
u_{2}\}=0.\end{multline}

From \label{e:2.100} and
$(u-v-h)X_{1}^{+}(u)X_{1}^{+}(v)=X_{1}^{+}(v)X_{1}^{+}(u)(u-v+h)$,\\
we get
\begin{multline}
X_{1}^{+}(u_{1})X_{1}^{+}(u_{2})X_{2}^{+}(v)-2X_{1}^{+}(u_{1})X_{2}^{+}(v)X_{1}^{+}(u_{2})\\
+X_{2}^{+}(v)X_{1}^{+}(u_{1})X_{1}^{+}(u_{2})+\{u_{1}\leftrightarrow
u_{2}\}=0.\end{multline}

Thus we have proved all the 
relations for $n=3$. Now we
proceed to the proof for general $n$. First of all, we
restrict \eqref{e:2.5} and \eqref{e:2.6} to $e_{ij}\otimes e_{kl},1\leq
i,j,k,l\leq n-1$, then we get
\begin{align}
\bar
R_{n-1}(u-v)J_1^{\pm}(u)J_2^{\pm}(v)=J_2^{\pm}(v)J_1^{\pm}(u)\bar
R_{n-1}(u-v)\end{align}
\begin{align}
\bar
R_{n-1}(u_{-}-v_{+})J_1^{+}(u)J_2^{-}(v)=J_2^{-}(v)J_1^{+}(u)\bar
R_{n-1}(u_{+}-v_{-})\end{align}
\begin{align}
J^{\pm}(u)=&\begin{pmatrix}
1&&&0\\e_{1}^{\pm}(u)&\ddots\\\vdots&&\ddots\\*&\ldots&e_{n-2}^{\pm}(u)&1
\end{pmatrix}
\begin{pmatrix}
k_{1}^{\pm}(u)&&&0\\&\ddots\\&&\ddots\\0&&&k_{n-1}^{\pm}(u)
\end{pmatrix}\\
&\begin{pmatrix}
1&f_{1}^{\pm}(u)&\ldots&*\\&\ddots\\&&\ddots&f_{n-2}^{\pm}(u)\\0&&&1
\end{pmatrix}\nonumber
\end{align}

Similarly restricting \eqref{e:2.5} and \eqref{e:2.6} to $e_{ij}\otimes e_{kl},2\leq
i,j,k,l\leq n$, we derive that
\begin{align}
\bar
R_{n-1}(u-v)\tilde{J}_1^{\pm}(u)\tilde{J}_2^{\pm}(v)=\tilde{J}_2^{\pm}(v)\tilde{J}_1^{\pm}(u)\bar
R_{n-1}(u-v)\end{align}
\begin{align}
\bar
R_{n-1}(u_{-}-v_{+})\tilde{J}_1^{+}(u)\tilde{J}_2^{-}(v)=\tilde{J}_2^{-}(v)\tilde{J}_1^{+}(u)\bar
R_{n-1}(u_{+}-v_{-})\end{align}
\begin{align}
\tilde{J}^{\pm}(u)=&\begin{pmatrix}
1&&&0\\e_{2}^{\pm}(u)&\ddots\\\vdots&&\ddots\\*&\ldots&e_{n-1}^{\pm}(u)&1
\end{pmatrix}
\begin{pmatrix}
k_{2}^{\pm}(u)&&&0\\&\ddots\\&&\ddots\\0&&&k_{n}^{\pm}(u)
\end{pmatrix}\\
&\begin{pmatrix}
1&f_{2}^{\pm}(u)&\ldots&*\\&\ddots\\&&\ddots&f_{n-1}^{\pm}(u)\\0&&&1
\end{pmatrix}\nonumber
\end{align}

By induction, we only need to check the relations between $e_{1}^{\pm}(u),k_{1}^{\pm}(u),f_{1}^{\pm}(u)$ and
$e_{n}^{\pm}(u),k_{n}^{\pm}(u),f_{n}^{\pm}(u)$. We now use the
formulas \eqref{e:2.14},\eqref{e:2.15} and  \eqref{e:2.17}. First we write down the matrices $L^{\pm}(u)$ and their inverse $L^{\pm}(u)^{-1}$,
\begin{align}
L^{\pm}(u)=\begin{pmatrix}
k_{1}^{\pm}(u)&k_{1}^{\pm}(u)e_{1}^{\pm}(u)&\ldots\\f_{1}^{\pm}(u)k_{1}^{\pm}(u)&\vdots&\ldots\\
\vdots&\vdots&\ldots
\end{pmatrix}
\end{align}
and
\begin{align}
L^{\pm}(u)^{-1}=\begin{pmatrix}
\ldots&\vdots&\vdots\\\ldots&\vdots&-e_{n-1}^{\pm}(u)k_{n}^{\pm}(u)^{-1}\\
\ldots&-k_{n-1}^{\pm}(u)^{-1}f_{1}^{\pm}(u)&k_{n}^{\pm}(u)^{-1}
\end{pmatrix}
\end{align}

From \eqref{e:2.14}, \eqref{e:2.15} and  \eqref{e:2.17}, we get
\begin{align}\label{e:2.111}
k_{1}^{\pm}(u)k_{n}^{\pm}(v)=k_{n}^{\pm}(v)k_{1}^{\pm}(u)\end{align}
\begin{align}\label{e:2.112}
k_{1}^{\pm}(u)f_{n-1}^{\pm}(v)=f_{n-1}^{\pm}(v)k_{1}^{\pm}(u)\end{align}
\begin{align}\label{e:2.113}
k_{1}^{\pm}(u)e_{n-1}^{\pm}(v)=e_{n-1}^{\pm}(v)k_{1}^{\pm}(u)\end{align}
\begin{align}\label{e:2.114}
k_{n}^{\pm}(u)f_{1}^{\pm}(v)=f_{1}^{\pm}(v)k_{n}^{\pm}(u)\end{align}
\begin{align}\label{e:2.115}
k_{n}^{\pm}(u)e_{1}^{\pm}(v)=e_{1}^{\pm}(v)k_{n}^{\pm}(u)\end{align}
\begin{align}\label{e:2.116}
k_{1}^{+}(u)k_{n}^{-}(v)=k_{n}^{-}(v)k_{1}^{+}(u)\end{align}
\begin{align}\label{e:2.117}
\frac{(u_{+}-v_{-})^{2}}{(u_{+}-v_{-})^{2}-h^{2}}k_{1}^{-}(u)k_{n}^{+}(v)=\frac{(u_{-}-v_{+})^{2}}{(u_{-}-v_{+})^{2}-h^{2}}k_{n}^{+}(v)k_{1}^{-}(u)\end{align}
\begin{align}\label{e:2.118}
k_{n}^{\pm}(u)f_{1}^{\mp}(v)=f_{1}^{\mp}(v)k_{n}^{\pm}(u)\end{align}
\begin{align}\label{e:2.119}
k_{n}^{\pm}(u)e_{1}^{\mp}(v)=e_{1}^{\mp}(v)k_{n}^{\pm}(u)\end{align}
\begin{align}\label{e:2.120}
k_{1}^{\pm}(u)f_{n-1}^{\mp}(v)=f_{n-1}^{\mp}(v)k_{1}^{\pm}(u)\end{align}
\begin{align}\label{e:2.121}
k_{1}^{\pm}(u)e_{n-1}^{\mp}(v)=e_{n-1}^{\mp}(v)k_{1}^{\pm}(u)\end{align}
\begin{align}\label{e:2.122}
e_{1}^{\pm}(u)e_{n-1}^{\pm}(v)=e_{n-1}^{\pm}(v)e_{1}^{\pm}(u)\end{align}
\begin{align}\label{e:2.123}
e_{1}^{\pm}(u)f_{n-1}^{\pm}(v)=f_{n-1}^{\pm}(v)e_{1}^{\pm}(u)\end{align}
\begin{align}\label{e:2.124}
e_{1}^{\pm}(u)e_{n-1}^{\mp}(v)=e_{n-1}^{\mp}(v)e_{1}^{\pm}(u)\end{align}
\begin{align}\label{e:2.125}
e_{1}^{\pm}(u)f_{n-1}^{\mp}(v)=f_{n-1}^{\mp}(v)e_{1}^{\pm}(u)\end{align}
\begin{align}\label{e:2.126}
f_{1}^{\pm}(u)e_{n-1}^{\pm}(v)=e_{n-1}^{\pm}(v)f_{1}^{\pm}(u)\end{align}
\begin{align}\label{e:2.127}
f_{1}^{\pm}(u)e_{n-1}^{\mp}(v)=e_{n-1}^{\mp}(v)f_{1}^{\pm}(u)\end{align}
\begin{align}\label{e:2.128}
f_{1}^{\pm}(u)f_{n-1}^{\pm}(v)=f_{n-1}^{\pm}(v)f_{1}^{\pm}(u)\end{align}
\begin{align}\label{e:2.129}
f_{1}^{\pm}(u)f_{n-1}^{\mp}(v)=f_{n-1}^{\mp}(v)f_{1}^{\pm}(u)\end{align}

From \eqref{e:2.113} and \eqref{e:2.121}, we get
\begin{align*}
k_{1}^{\pm}(u)^{-1}X_{n-1}^{+}(v)k_{1}^{\pm}(u)=X_{n-1}^{+}(v).\end{align*}

From \eqref{e:2.112} and \eqref{e:2.120}, we get
\begin{align*}
k_{1}^{\pm}(u)X_{n-1}^{-}(v)k_{1}^{\pm}(u)^{-1}=X_{n-1}^{-}(v).\end{align*}

From \eqref{e:2.122} and \eqref{e:2.124}, we get
\begin{align*}
X_{1}^{+}(u)X_{n-1}^{+}(v)=X_{n-1}^{+}(v)X_{1}^{+}(u).\end{align*}

From \eqref{e:2.128} and \eqref{e:2.129}, we get
\begin{align*}
X_{1}^{-}(u)X_{n-1}^{-}(v)=X_{n-1}^{-}(v)X_{1}^{-}(u).\end{align*}

This completes the proof of all the relations in the general case.
\end{proof}

\begin{corollary}
The following relations hold in the algebra $\DY_{h}(\mathfrak{sl}_n)$:
\begin{align*}
[H_{i}^{\pm}(u),H_{j}^{\pm}(v)]=0,
\end{align*}
\begin{align*}
(u_{\mp}-v_{\pm}+hB_{ij})(u_{\pm}-v_{\mp}-hB_{ij})H_{i}^{\pm}(u)H_{j}^{\mp}(v)\\
=(u_{\mp}-v_{\pm}-hB_{ij})(u_{\pm}-v_{\mp}+hB_{ij})H_{j}^{\mp}(v)H_{i}^{\pm}(u),
\end{align*}
\begin{align*}
H_{i}^{\pm}(u)^{-1}E_{j}(v)H_{i}^{\pm}(u)=\frac{u_{\pm}-v-hB_{ij}}{u_{\pm}-v+hB_{ij}}E_{j}(v),\\
H_{i}^{\pm}(u)F_{j}(v)H_{i}^{\pm}(u)^{-1}=\frac{u_{\mp}-v-hB_{ij}}{u_{\mp}-v+hB_{ij}}F_{j}(v),
\end{align*}
\begin{align*}
(u-v-hB_{ij})E_{i}(u)E_{j}(v)=(u-v+hB_{ij})E_{j}(v)E_{i}(u),\\
(u-v+hB_{ij})F_{i}(u)F_{j}(v)=(u-v-hB_{ij})F_{j}(v)F_{i}(u),
\end{align*}
\begin{align*}
\sum^{m=1-a_{ij}}_{\sigma\in\mathfrak{S}_{m}}[E_{i}(u_{\sigma(1)}),[E_{i}(u_{\sigma(2)})\cdots,[E_{i}(u_{\sigma(m)}),E_{j}(v)]\cdots]=0
~~ (i\neq j),\\
\sum^{m=1-a_{ij}}_{\sigma\in\mathfrak{S}_{m}}[F_{i}(u_{\sigma(1)}),[F_{i}(u_{\sigma(2)})\cdots,[F_{i}(u_{\sigma(m)}),F_{j}(v)]\cdots]=0
~~ (i\neq j),\\
[E_{i}(u),F_{j}(v)]=\frac{1}{h}\delta_{ij}\{\delta(u_{-}-v_{+})H_{i}^{+}(u_{-})-\delta(u_{+}-v_{-})H_{i}^{-}(v_{-})\}.
\end{align*}
Here we set $B_{ij}=\frac{1}{2}a_{ij}$,where $A=(a_{ij})$ is the
Cartan matrix of the Lie algebra $\mathfrak{sl}_n$.
\end{corollary}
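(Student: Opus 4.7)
The plan is to derive every relation in the statement by substituting the definitions
\[
H_i^{\pm}(u) = k_{i+1}^{\pm}(u + \tfrac{1}{2}hi)\, k_i^{\pm}(u + \tfrac{1}{2}hi)^{-1}, \quad E_i(u) = \tfrac{1}{h} X_i^{+}(u + \tfrac{1}{2}hi), \quad F_i(u) = \tfrac{1}{h} X_i^{-}(u + \tfrac{1}{2}hi),
\]
into the relations already established in Theorem 2.5, and then carefully tracking the two kinds of shifts in the spectral parameters: the critical shift $u_\pm = u \pm \tfrac14 hc$, and the level shift $u \mapsto u + \tfrac12 hi$ built into the definition. Set $U = u + \tfrac12 hi$ and $V = v + \tfrac12 hj$ throughout. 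The key observation is that $U_\pm - V_\pm$ differs from $u_\pm - v_\pm$ by $\tfrac12 h(i-j)$, so the pattern $i-j \in \{0, \pm 1, \text{other}\}$ translates directly into the values $B_{ij} \in \{1, -\tfrac12, 0\}$ appearing in the statement.

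The easy relations come first. The commutativity $[H_i^{\pm}(u), H_j^{\pm}(v)] = 0$ is immediate from the pairwise commutativity $k_p^{\pm}(u) k_q^{\pm}(v) = k_q^{\pm}(v) k_p^{\pm}(u)$. The relations between $E_i E_j$, $F_i F_j$, and the Serre relations follow by direct substitution from the $X$-$X$ relations of Theorem 2.5, once one verifies the three cases $i=j$, $|i-j|=1$, and $|i-j| \geq 2$: for $i=j$ one uses the $(u-v\mp h)X_i^{\pm}(u)X_i^{\pm}(v) = (u-v\pm h)X_i^{\pm}(v)X_i^{\pm}(u)$ relation directly; for $|i-j|=1$, after relabeling (say $j=i+1$) the shift is $U - V = u - v - \tfrac12 h$, and the Theorem 2.5 relations $(u-v+h)X_i^+(u)X_{i+1}^+(v) = (u-v)X_{i+1}^+(v)X_i^+(u)$ and its $X^-$ analog yield precisely the factors $u-v \pm hB_{ij} = u-v \mp \tfrac12 h$ required. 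The commutator $[E_i(u), F_j(v)]$ follows from the last relation of Theorem 2.5 by substituting $u \mapsto u + \tfrac12 hi$, $v \mapsto v + \tfrac12 hj$, noting that the delta-function supports force $i=j$ (otherwise the shifts by $\tfrac12 h(i-j)$ would destroy the support identity) and converting $k_{i+1}^\pm (\cdot) k_i^\pm(\cdot)^{-1}$ into $H_i^\pm(\cdot)$.

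The action of $H_i^\pm$ on $E_j$ and $F_j$ is a conjugation computation: expand $H_i^\pm(u)^{\mp 1} E_j(v) H_i^\pm(u)^{\pm 1}$ using the factorization of $H_i^\pm$ and apply the four relations from Theorem 2.5 describing how $k_p^\pm$ acts on $X_q^\pm$. For $j=i$, the contributions from $k_i^\pm$ and $k_{i+1}^\pm$ combine into $\tfrac{u_\pm - v - h}{u_\pm - v + h}$, matching $B_{ii}=1$; for $j = i \pm 1$ only the $k_{i+1}$ (respectively $k_i$) factor contributes nontrivially, and the level shift $\tfrac12 h(i-j) = \mp \tfrac12 h$ inside $U_\pm - w$ produces exactly the factor $\tfrac{u_\pm - v + \tfrac12 h}{u_\pm - v - \tfrac12 h}$, matching $B_{i,i\pm 1} = -\tfrac12$; for $|i-j|\geq 2$ both factors commute with $X_j^\pm$ trivially.

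The main obstacle is the $H_i^\pm H_j^\mp$ relation. Here one must expand the product into the four factors $k_{i+1}^\pm k_i^{\pm -1} k_{j+1}^\mp k_j^{\mp -1}$ and move everything past each other using Theorem 2.5, where the off-diagonal $k_i^- k_j^+$ relation for $i<j$ contributes the quadratic factor $\tfrac{(u_+-v_-)^2}{(u_+-v_-)^2 - h^2}$ while the off-diagonal $k_i^+ k_j^-$ relation for $i<j$ contributes nothing. The subtle point is that for $j = i$ three of the four pairs are diagonal (giving linear factors in $U_\pm - V_\mp$), for $j = i+1$ one pair is the quadratic off-diagonal type and two are diagonal, and for $|i-j| \geq 2$ all nontrivial factors cancel in pairs. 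In each of the first two cases the rational factors must be reorganized to match the symmetric form of the stated relation, with the values $B_{ii}=1$ and $B_{i,i+1} = -\tfrac12$ arising from the level shift $\tfrac12 h(i-j)$. This collapse of a four-factor product into a simple symmetric bilinear form is the technical heart of the proof; the remaining verifications are routine translations of Theorem 2.5.
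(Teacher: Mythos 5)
Your proposal is correct and follows essentially the same route as the paper: substitute the definitions of $H_{i}^{\pm}$, $E_{i}$, $F_{i}$ into the relations of Theorem 2.5 and verify the cases $i=j$, $j=i\pm 1$, $|i-j|>1$ while tracking the level shifts $u\mapsto u+\tfrac12 hi$. The paper's proof is just the fully written-out version of this case analysis, using exactly the $k$--$k$, $k$--$X$ and $X$--$X$ relations you identify (including the quadratic off-diagonal $k_i^- k_j^+$ relation for the $H_i^{\pm}H_j^{\mp}$ case).
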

\begin{proof}
Since $k_{i}^{\pm}(u)k_{j}^{\pm}(v)=k_{j}^{\pm}(v)k_{i}^{\pm}(u)$, we
have
\begin{align*}
k_{i}^{\pm}(u)k_{j}^{\pm}(v)^{-1}&=k_{j}^{\pm}(v)^{-1}k_{i}^{\pm}(u),\\
k_{i}^{\pm}(u)^{-1}k_{j}^{\pm}(v)&=k_{j}^{\pm}(v)k_{i}^{\pm}(u)^{-1},\\
k_{i}^{\pm}(u)^{-1}k_{j}^{\pm}(v)^{-1}&=k_{j}^{\pm}(v)^{-1}k_{i}^{\pm}(u)^{-1},
\end{align*}
so we get $[H_{i}^{\pm}(u),H_{j}^{\pm}(v)]=0$. Next, we prove that
\begin{align*}
(u_{\mp}-v_{\pm}+hB_{ij})(u_{\pm}-v_{\mp}-hB_{ij})H_{i}^{\pm}(u)H_{j}^{\mp}(v)\\
=(u_{\mp}-v_{\pm}-hB_{ij})(u_{\pm}-v_{\mp}+hB_{ij})H_{j}^{\mp}(v)H_{i}^{\pm}(u),
\end{align*}
Case 1:\quad $i=j,B_{ij}=1$
\begin{align}\label{e:2.130}
&\frac{u_{-}-v_{+}+h}{u_{-}-v_{+}}k_{i+1}^{+}(u+\frac{1}{2}hi)k_{i+1}^{-}(v+\frac{1}{2}hi)\\\nonumber
=&\frac{u_{+}-v_{-}+h}{u_{+}-v_{-}}k_{i+1}^{-}(v+\frac{1}{2}hi)k_{i+1}^{+}(u+\frac{1}{2}hi),\\ \label{e:2.131}
&\frac{u_{-}-v_{+}+h}{u_{-}-v_{+}}k_{i}^{+}(u+\frac{1}{2}hi)k_{i}^{-}(v+\frac{1}{2}hi)^{-1}\\\nonumber
=&\frac{u_{+}-v_{-}+h}{u_{+}-v_{-}}k_{i}^{-}(v+\frac{1}{2}hi)^{-1}k_{i}^{+}(u+\frac{1}{2}hi),\\ \label{e:2.132}
&\frac{(v_{+}-u_{-})^{2}}{(v_{+}-u_{-})^{2}-h^{2}}k_{i+1}^{+}(u+\frac{1}{2}hi)k_{i}^{-}(v+\frac{1}{2}hi)^{-1}\\\nonumber
=&\frac{(v_{-}-u_{+})^{2}}{(v_{-}-u_{+})^{2}-h^{2}}k_{i}^{-}(v+\frac{1}{2}hi)^{-1}k_{i+1}^{+}(u+\frac{1}{2}hi).
\end{align}
From \eqref{e:2.130}, \eqref{e:2.131} and \eqref{e:2.132}, we get
\begin{align*}
(u_{-}-v_{+}+h)(u_{+}-v_{-}-h)&H_{i}^{+}(u)H_{i}^{-}(v)\\
&=(u_{-}-v_{+}-h)(u_{+}-v_{-}+h)H_{i}^{-}(v)H_{i}^{+}(u).\end{align*}
Swapping $u$ and $v$, we get
\begin{align*}
(u_{+}-v_{-}+h)(u_{-}-v_{+}-h)&H_{i}^{-}(u)H_{i}^{+}(v)\\
&=(u_{+}-v_{-}-h)(u_{-}-v_{+}+h)H_{i}^{+}(v)H_{i}^{-}(u).\end{align*}
Thus,
\begin{align*}
(u_{\mp}-v_{\pm}+h)(u_{\pm}-v_{\mp}-h)&H_{i}^{\pm}(u)H_{i}^{\mp}(v)\\
&=(u_{\mp}-v_{\pm}-h)(u_{\pm}-v_{\mp}+h)H_{i}^{\mp}(v)H_{i}^{\pm}(u).\end{align*}
Case 2:\quad $j=i+1~or~j=i-1,B_{ij}=-\frac{1}{2}$\\
First, we assume $j=i+1$,
\begin{align}\label{e:2.133}
&\frac{u_{+}-v_{-}+\frac{1}{2}h}{u_{+}-v_{-}-\frac{1}{2}h}k_{i+1}^{+}(u+\frac{1}{2}hi)k_{i+1}^{-}(v+\frac{1}{2}hi+\frac{1}{2}h)^{-1}\\\nonumber
=&\frac{u_{-}-v_{+}+\frac{1}{2}h}{u_{-}-v_{+}-\frac{1}{2}h}k_{i+1}^{-}(v+\frac{1}{2}hi+\frac{1}{2}h)^{-1}k_{i+1}^{+}(u+\frac{1}{2}hi).
\end{align}
From \eqref{e:2.133}, we get
\begin{align*}
(u_{-}-v_{+}-\frac{1}{2}h)(u_{+}-v_{-}+&\frac{1}{2}h)H_{i}^{+}(u)H_{i+1}^{-}(v)\\
&=(u_{-}-v_{+}+\frac{1}{2}h)(u_{+}-v_{-}-\frac{1}{2}h)H_{i+1}^{-}(v)H_{i}^{+}(u).\end{align*}
Similarly, we can prove
\begin{align*}
(u_{+}-v_{-}-\frac{1}{2}h)(u_{-}-v_{+}+&\frac{1}{2}h)H_{i}^{-}(u)H_{i+1}^{+}(v)\\
&=(u_{+}-v_{-}+\frac{1}{2}h)(u_{-}-v_{+}-\frac{1}{2}h)H_{i+1}^{+}(v)H_{i}^{-}(u).\end{align*}
So we have
\begin{align*}
(u_{\mp}-v_{\pm}-\frac{1}{2}h)(u_{\pm}-v_{\mp}+&\frac{1}{2}h)H_{i}^{\pm}(u)H_{i+1}^{\mp}(v)\\
&=(u_{\mp}-v_{\pm}+\frac{1}{2}h)(u_{\pm}-v_{\mp}-\frac{1}{2}h)H_{i+1}^{\mp}(v)H_{i}^{\pm}(u).\end{align*}
Similarly, we can prove
\begin{align*}
(u_{\mp}-v_{\pm}-\frac{1}{2}h)(u_{\pm}-v_{\mp}+&\frac{1}{2}h)H_{i}^{\pm}(u)H_{i-1}^{\mp}(v)\\
&=(u_{\mp}-v_{\pm}+\frac{1}{2}h)(u_{\pm}-v_{\mp}-\frac{1}{2}h)H_{i-1}^{\mp}(v)H_{i}^{\pm}(u).\end{align*}
Case 3:\quad $\mid i-j\mid>1,B_{ij}=0$\\
Suppose $j<i$, then we have $j<j+1<i<i+1$. It follows that
$H_{i}^{\pm}(u)H_{j}^{\mp}(v)=H_{j}^{\mp}(v)H_{i}^{\pm}(u)$.

Combining the above three cases, we derive the following relation
\begin{align*}
(u_{\mp}-v_{\pm}+hB_{ij})(u_{\pm}-v_{\mp}-hB_{ij})H_{i}^{\pm}(u)H_{j}^{\mp}(v)\\
=(u_{\mp}-v_{\pm}-hB_{ij})(u_{\pm}-v_{\mp}+hB_{ij})H_{j}^{\mp}(v)H_{i}^{\pm}(u).
\end{align*}
Next, we prove that
\begin{align*}
H_{i}^{\pm}(u)^{-1}E_{j}(v)H_{i}^{\pm}(u)=\frac{u_{\pm}-v-hB_{ij}}{u_{\pm}-v+hB_{ij}}E_{j}(v).
\end{align*}
Case 1:\quad $i=j,B_{ij}=1$
\begin{align}\label{e:2.134}
k_{i+1}^{\pm}(u+\frac{1}{2}hi)^{-1}E_{i}(v)k_{i+1}^{\pm}(u+\frac{1}{2}hi)=\frac{u_{\pm}-v-h}{u_{\pm}-v}E_{i}(v),\\ \label{e:2.135}
k_{i}^{\pm}(u+\frac{1}{2}hi)E_{i}(v)k_{i}^{\pm}(u+\frac{1}{2}hi)^{-1}=\frac{u_{\pm}-v}{u_{\pm}-v+h}E_{i}(v),
\end{align}
From \eqref{e:2.134} and \eqref{e:2.135}, we get
$$H_{i}^{\pm}(u)^{-1}E_{i}(v)H_{i}^{\pm}(u)=\frac{u_{\pm}-v-h}{u_{\pm}-v+h}E_{i}(v).$$
Case 2:\quad $j=i+1,B_{ij}=-\frac{1}{2}$
\begin{align}\label{e:2.136}
k_{i+1}^{\pm}(u+\frac{1}{2}hi)^{-1}E_{i+1}(v)k_{i+1}^{\pm}(u+\frac{1}{2}hi)=\frac{u_{\pm}-v+\frac{1}{2}h}{u_{\pm}-v-\frac{1}{2}h}E_{i+1}(v),\\ \label{e:2.137}
k_{i}^{\pm}(u+\frac{1}{2}hi)E_{i+1}(v)k_{i}^{\pm}(u+\frac{1}{2}hi)^{-1}=E_{i+1}(v),
\end{align}
From \eqref{e:2.136} and \eqref{e:2.137}, we get
$$H_{i}^{\pm}(u)^{-1}E_{i+1}(v)H_{i}^{\pm}(u)=\frac{u_{\pm}-v+\frac{1}{2}h}{u_{\pm}-v-\frac{1}{2}h}E_{i+1}(v).$$
Case 3:\quad $j=i-1,B_{ij}=-\frac{1}{2}$
\begin{align} \label{e:2.138}
k_{i+1}^{\pm}(u+\frac{1}{2}hi)^{-1}E_{i-1}(v)k_{i+1}^{\pm}(u+\frac{1}{2}hi)=E_{i-1}(v),\\ \label{e:2.139}
k_{i}^{\pm}(u+\frac{1}{2}hi)E_{i-1}(v)k_{i}^{\pm}(u+\frac{1}{2}hi)^{-1}=\frac{u_{\pm}-v+\frac{1}{2}h}{u_{\pm}-v-\frac{1}{2}h}E_{i-1}(v),
\end{align}
From \eqref{e:2.138} and \eqref{e:2.139}, we get
$$H_{i}^{\pm}(u)^{-1}E_{i-1}(v)H_{i}^{\pm}(u)=\frac{u_{\pm}-v+\frac{1}{2}h}{u_{\pm}-v-\frac{1}{2}h}E_{i-1}(v).$$
Case 4:\quad $\mid i-j\mid>1,B_{ij}=0$
\begin{align}\label{e:2.140}
k_{i+1}^{\pm}(u+\frac{1}{2}hi)^{-1}E_{j}(v)k_{i+1}^{\pm}(u+\frac{1}{2}hi)=E_{j}(v),\\ \label{e:2.141}
k_{i}^{\pm}(u+\frac{1}{2}hi)E_{j}(v)k_{i}^{\pm}(u+\frac{1}{2}hi)^{-1}=E_{j}(v),
\end{align}
From \eqref{e:2.140} and \eqref{e:2.141}, we get
$$H_{i}^{\pm}(u)^{-1}E_{j}(v)H_{i}^{\pm}(u)=E_{j}(v).$$
Combining the above four cases, we derive the following relation
\begin{align*}
H_{i}^{\pm}(u)^{-1}E_{j}(v)H_{i}^{\pm}(u)=\frac{u_{\pm}-v-hB_{ij}}{u_{\pm}-v+hB_{ij}}E_{j}(v).
\end{align*}
Similarly, we can prove
\begin{align*}
H_{i}^{\pm}(u)F_{j}(v)H_{i}^{\pm}(u)^{-1}=\frac{u_{\mp}-v-hB_{ij}}{u_{\mp}-v+hB_{ij}}F_{j}(v).
\end{align*}
From the following three relations:
\begin{align*}
&(u-v-h)X_{i}^{+}(u)X_{i}^{+}(v)=(u-v+
h)X_{i}^{+}(v)X_{i}^{+}(u),\\
&(u-v+h)X_{i}^{+}(u)X_{i+1}^{+}(v)=(u-v)X_{i+1}^{+}(v)X_{i}^{+}(u),\\
&X_{i}^{+}(u)X_{j}^{+}(v)=X_{j}^{+}(v)X_{i}^{+}(u)\quad if~ |i-j|>1,
\end{align*}
we get
$$(u-v-hB_{ij})E_{i}(u)E_{j}(v)=(u-v+hB_{ij})E_{j}(v)E_{i}(u).$$
Similarly, we can prove
$$(u-v+hB_{ij})F_{i}(u)F_{j}(v)=(u-v-hB_{ij})F_{j}(v)F_{i}(u).$$
From the following two relations:
\begin{align*}
&X_{i}^{+}(u_{1})X_{i}^{+}(u_{2})X_{j}^{+}(v)-2X_{i}^{+}(u_{1})X_{j}^{+}(v)X_{i}^{+}(u_{2})+X_{j}^{+}(v)X_{i}^{+}(u_{1})X_{i}^{+}(u_{2})\\&+\{u_{1}\leftrightarrow
u_{2}\}=0 \quad if~ |i-j|=1,\\
&X_{i}^{+}(u)X_{j}^{+}(v)=X_{j}^{+}(v)X_{i}^{+}(u)\quad if~ |i-j|>1,
\end{align*}
we get
$$\sum_{\sigma\in\mathfrak{S}_{m}}[E_{i}(u_{\sigma(1)}),[E_{i}(u_{\sigma(2)})\cdots,[E_{i}(u_{\sigma(m)}),E_{j}(v)]\cdots]=0
~~ i\neq j,m=1-a_{ij}.$$ Similarly, we can prove
$$\sum_{\sigma\in\mathfrak{S}_{m}}[F_{i}(u_{\sigma(1)}),[F_{i}(u_{\sigma(2)})\cdots,[F_{i}(u_{\sigma(m)}),F_{j}(v)]\cdots]=0
~~ i\neq j,m=1-a_{ij}.$$ From
\begin{multline*}
[X_{i}^{+}(u),X_{j}^{-}(v)]\\
=h\delta_{ij}\{\delta(u_{-}-v_{+})k_{i+1}^{+}(u_{-})k_{i}^{+}(u_{-})^{-1}-\delta(u_{+}-v_{-})k_{i+1}^{-}(v_{-})k_{i}^{-}(v_{-})^{-1}\},
\end{multline*}
we get
$$[E_{i}(u),F_{j}(v)]=\frac{1}{h}\delta_{ij}\{\delta(u_{-}-v_{+})H_{i}^{+}(u_{-})-\delta(u_{+}-v_{-})H_{i}^{-}(v_{-})\}.$$
\end{proof}

\begin{remark}
Let $f(u)$ be a scalar function, and $R(u)=f(u)\bar R(u)$, Corollary
2.6 still holds if we change the normalization of R-matrix in
Definition 2.1 to $R(u)$. The relations in Corollary 2.6 were announced in Iohara's paper \cite{I}, while we provide a complete proof in this section.
\end{remark}

\section{Drinfeld realization of the Yangian double}\label{s:drins}
In this section, we will describe the Drinfeld realization of
$\DY_h(\mathfrak{gl}_n)$ and $\DY_h(\mathfrak{sl}_n)$.
First of all, we define a natural ascending filtration on the Yangian double $\DY_h(\mathfrak{gl}_n)$ by setting ${\rm deg}\,l_{ij}^{(r)}=r-1,$ ${\rm deg}\,l_{ij}^{(-r)}=-r$ for all $r\geqslant 1$ and ${\rm deg}\,c={\rm deg}\,h=0$. Denote by $\bar l_{ij}^{\ts(\pm r)}$ the image of $l_{ij}^{\ts(\pm r)}$ in the $(r-1)$ (or $(-r)$)-th component of the associated graded algebra ${\rm gr}\,\DY_h(\mathfrak{gl}_n)$. Let $\hat{\mathfrak{gl}}_{n}$ be  the central extension $\mathfrak{gl}_{n}[x,x^{-1}]\oplus\mathbb{C}K$ defined by the commutation relations
\begin{align*}
[E_{ij}[r],E_{kl}[s]]=\delta_{kj}E_{il}[r+s]-\delta_{il}E_{kj}[r+s]+r\delta_{kj}\delta_{il}\delta_{r,-s}K,
\end{align*}
and the element $K$ is central.
One then has the following result for the graded algebras.

\begin{proposition}
The mapping
\begin{equation}
E_{ij}[r-1]\mapsto \bar l_{ij}^{\ts (r)},\quad E_{ij}[-r]\mapsto \bar l_{ij}^{\ts (-r)},\quad K\mapsto \bar c,\quad h\mapsto\bar{h}
\end{equation}
defines an isomorphism
\begin{equation}\label{iso1}
{\rm U}(\mathfrak{gl}_{n}[x,x^{-1}]\oplus \mathbb{C}K)[[h]]\rightarrow {\rm gr}\,\DY_h(\mathfrak{gl}_n),
\end{equation}
where $K$ is the central element.
\end{proposition}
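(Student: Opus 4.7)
The plan is to establish this PBW-type statement in two conceptually separate steps: first confirm that the proposed assignment is a well-defined algebra homomorphism, and then upgrade it to an isomorphism via a PBW argument for $\DY_h(\mathfrak{gl}_n)$ itself.

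For well-definedness, I would extract commutation relations among the $l_{ij}^{(r)}$ by expanding the three RLL relations \eqref{e:2.5}--\eqref{e:2.6} coefficient by coefficient. Multiplying the $L^{\pm}L^{\pm}$ relations by $(u-v)$ and comparing coefficients of $u^{-a-1}v^{-b-1}$ (for positive or negative $a,b$ according to the sign choice) produces identities of the schematic form
\begin{equation*}
[l_{ij}^{(r)}, l_{kl}^{(s)}] = \delta_{kj}\,l_{il}^{(r+s-1)} - \delta_{il}\,l_{kj}^{(r+s-1)} + h\cdot(\text{terms of strictly lower filtration degree}).
\end{equation*}
Passing to $\gr\DY_h(\mathfrak{gl}_n)$ kills the lower-order tail and yields exactly the loop-algebra bracket of $\mathfrak{gl}_n[x,x^{-1}]$. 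For the mixed relation \eqref{e:2.6} the shift $u-v\mp\tfrac{1}{2}hc$ inside the R-matrix is essential: expanding $\tfrac{1}{u-v\pm hc/2}=\tfrac{1}{u-v}\mp\tfrac{hc/2}{(u-v)^2}+\cdots$ produces, at the matching filtration degree, precisely the central term $r\,\delta_{kj}\delta_{il}\delta_{r,-s}K$. This shows that the images $\bar l_{ij}^{(\pm r)}$ and $\bar c$ satisfy the defining relations of $\widehat{\mathfrak{gl}}_n$, so the map \eqref{iso1} is a well-defined algebra homomorphism.

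Surjectivity is automatic because the $\bar l_{ij}^{(\pm r)}$ and $\bar c$ generate $\gr\DY_h(\mathfrak{gl}_n)$ by the very definition of the filtration. So everything reduces to injectivity, which I regard as the main obstacle. I would prove it via a PBW theorem for the RLL presentation of $\DY_h(\mathfrak{gl}_n)$. Fix any total order on $\{l_{ij}^{(r)}\mid r\in\mathbb{Z}\}\cup\{c\}$. Using the commutation relations derived above, any ordered monomial-swap produces a term of the same filtration degree plus strictly lower-degree terms; by induction on filtration degree one concludes that ordered monomials in these generators span $\DY_h(\mathfrak{gl}_n)$ over $\mathcal{A}$, and hence their images span $\gr\DY_h(\mathfrak{gl}_n)$. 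Classical PBW gives that the analogously ordered monomials in the $E_{ij}[r]$'s and $K$ form an $\mathcal{A}$-basis of $\mathrm{U}(\widehat{\mathfrak{gl}}_n)[[h]]$, and our map sends one spanning set to the other, compatibly with the filtration.

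The remaining point---linear independence of the ordered monomials in $\gr\DY_h(\mathfrak{gl}_n)$---is the genuine core of the argument. Here I would exhibit a faithful family of $\DY_h(\mathfrak{gl}_n)$-modules, for example tensor products of evaluation modules extended to the double together with a vacuum/Wakimoto module (available for the critical level via the later sections of the paper, and for generic level via Iohara's construction), and check that ordered monomials act by linearly independent operators on a sufficiently large direct sum. Modulo this PBW ingredient, the proposition follows because the two $\mathcal{A}$-bases, one on each side of \eqref{iso1}, correspond bijectively under the map, forcing it to be an isomorphism.
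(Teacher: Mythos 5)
Your overall strategy coincides with the paper's: expand the defining relations \eqref{e:2.5}--\eqref{e:2.6} coefficientwise, keep only the top filtration degree to check that the images $\bar l_{ij}^{\ts(\pm r)}$ and $\bar c$ satisfy the relations of $\mathfrak{gl}_n[x,x^{-1}]\oplus\mathbb{C}K$ (with the central term produced by the $\tfrac12 hc$ shift inside the R-matrix of the mixed relation), observe that surjectivity of \eqref{iso1} is automatic from the definition of the filtration, and reduce injectivity to a Poincar\'e--Birkhoff--Witt statement for $\DY_h(\mathfrak{gl}_n)$. Up to that point the two arguments are essentially identical (your schematic index $r+s-1$ in the commutator is in fact the one consistent with the degree count).

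The divergence, and the problem, is in how the PBW ingredient is handled. The paper does not prove it; it invokes the known fact that the ordered monomials in the $l_{ij}^{(r)}$, $l_{ij}^{(-r)}$ and $c$ form a topological basis of $\DY_h(\mathfrak{gl}_n)$, citing \cite{ji:cen}, and injectivity follows at once. You instead attempt to prove this basis property. The spanning half of your argument (reordering monomials modulo strictly lower filtration degree, by induction) is standard and fine, but the linear-independence half --- which you yourself identify as ``the genuine core'' --- is only gestured at. Exhibiting a family of modules on which ordered monomials act by linearly independent operators \emph{is} the hard content of the PBW theorem, and the specific modules you propose are problematic: evaluation homomorphisms do not extend naively to the centrally extended double (the generators $l_{ij}^{(-r)}$ and the central element $c$ have no evident action on finite-dimensional evaluation modules), and a vacuum or Wakimoto module at a single fixed level cannot separate all ordered monomials. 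As written, your proof therefore has a genuine gap precisely at the step everything was reduced to; it becomes complete only if, like the paper, you import the topological PBW basis of $\DY_h(\mathfrak{gl}_n)$ as an external input.
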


\begin{proof}
Using the expansion
\begin{align*}
\frac{1}{u-v}=u^{-1}+u^{-2}v+u^{-3}v^{2}+\cdots,
\end{align*}
and taking the coefficient at $u^{-r}v^{-s}~(r,s\geq 1)$ and keeping the highest degree terms on both sides of the relation \eqref{e:2.6}
gives that
\beq
[\bar l_{ij}^{\ts (r)},\bar l_{kl}^{\ts (-s)}]=\begin{cases}
\delta_{kj}\bar l_{il}^{\ts (r-s-1)}-\delta_{il}\bar l_{kj}^{\ts (r-s-1)}+(r-1)\delta_{kj}\delta_{il}\delta_{r,s+1} \bar c, \quad r\leq s\\
\delta_{kj}\bar l_{il}^{\ts (r-s)}-\delta_{il}\bar l_{kj}^{\ts (r-s)}+(r-1)\delta_{kj}\delta_{il}\delta_{r,s+1} \bar c, \quad r>s
\end{cases}
\eeq
Similarly, the coefficients at $u^{-r}v^{-s}~(r,s\geq 1)$ and $u^{r}v^{s}~(r,s\geq 1)$ of \eqref{e:2.5} imply that
\begin{align}
[\bar l_{ij}^{\ts (r)},\bar l_{kl}^{\ts (s)}]&=\delta_{kj}\bar l_{il}^{\ts (r+s-2)}-\delta_{il}\bar l_{kj}^{\ts (r+s-2)},\\
[\bar l_{ij}^{\ts (-r)},\bar l_{kl}^{\ts (-s)}]&=\delta_{kj}\bar l_{il}^{\ts (-r-s)}-\delta_{il}\bar l_{kj}^{\ts (-r-s)}.
\end{align}
It follows that the map \eqref{iso1} is a surjective homomorphism. Note that the ordered monomials in the generators $l_{ij}^{(r)},l_{ij}^{(-r)}$ and $c$ form a topological
basis of the algebra $\DY_h(\mathfrak{gl}_n)$, see \cite{ji:cen}. This implies that the map is also injective.
\end{proof}

Next we introduce the $ij$-th quasideterminant of a matrix.
Let $A=[a_{ij}]$ be an $N\times N$ matrix over a ring with $1$. Delete the $i$-th row
and $j$-th column of $A$, we obtain a submatrix of $A$. We will denote it by $A^{ij}$.
If the matrix
$A^{ij}$ is invertible, we define the $ij$-{\em th quasideterminant of} $A$
by the following formula
\ben
|A|_{ij}=a_{ij}-r^{\tss j}_i(A^{ij})^{-1}\ts c^{\tss i}_j,
\een
where $r^{\tss j}_i$ is the row matrix obtained from the $i$-th
row of $A$ by deleting the element $a_{ij}$, and $c^{\tss i}_j$
is the column matrix obtained from the $j$-th
column of $A$ by deleting the element $a_{ij}$; see
\cite{gr:dm}. We also denote the quasideterminant $|A|_{ij}$
by boxing the entry $a_{ij}$,
\ben
|A|_{ij}=\left|\begin{matrix}a_{11}&\dots&a_{1j}&\dots&a_{1N}\\
                                   &\dots&      &\dots&      \\
                             a_{i1}&\dots&\boxed{a_{ij}}&\dots&a_{iN}\\
                                   &\dots&      &\dots&      \\
                             a_{N1}&\dots&a_{Nj}&\dots&a_{NN}
                \end{matrix}\right|.
\een

Assume the Gauss decomposition of the generator matrix $L^{\pm}(u)$ is $L^{\pm}(u)=F^{\pm}(u)H^{\pm}(u)E^{\pm}(u)$, then we have the well-known formulas for the entries of the matrices $H^{\pm}(u),E^{\pm}(u)$ and $F^{\pm}(u)$; see e.g. \cite[Sec.~1.11]{Mo1}.

\begin{proposition}
For $i=1,\dots,n,$
\begin{align}
k^{\pm}_i(u)&=qdetL_{i,i}^{\pm}(u-(i-1)h)qdetL_{i-1,i-1}^{\pm}(u-(i-1)h)^{-1}\\ \nonumber
&=\begin{vmatrix} l^{\pm}_{1\tss 1}(u)&\dots&l^{\pm}_{1\ts i-1}(u)&l^{\pm}_{1\tss i}(u)\\
                          \vdots&\ddots&\vdots&\vdots\\
                         l^{\pm}_{i-1\ts 1}(u)&\dots&l^{\pm}_{i-1\ts i-1}(u)&l^{\pm}_{i-1\ts i}(u)\\
                         l^{\pm}_{i\tss 1}(u)&\dots&l^{\pm}_{i\ts i-1}(u)&\boxed{l^{\pm}_{i\tss i}(u)}\\
           \end{vmatrix}.
\end{align}
For $1\leqslant i<j\leqslant n$,
\begin{align}
e^{\pm}_{ij}(u)&=qdetL_{i,i}^{\pm}(u-(i-1)h)^{-1}qdetL_{i,j}^{\pm}(u-(i-1)h)\\ \nonumber
&=k^{\pm}_i(u)^{-1}\ts\begin{vmatrix} l^{\pm}_{1\tss 1}(u)&\dots&l^{\pm}_{1\ts i-1}(u)&l^{\pm}_{1\ts j}(u)\\
                          \vdots&\ddots&\vdots&\vdots\\
                         l^{\pm}_{i-1\ts 1}(u)&\dots&l^{\pm}_{i-1\ts i-1}(u)&l^{\pm}_{i-1\ts j}(u)\\
                         l^{\pm}_{i\tss 1}(u)&\dots&l^{\pm}_{i\ts i-1}(u)&\boxed{l^{\pm}_{i\tss j}(u)}\\
           \end{vmatrix}
\end{align}
and
\begin{align}
f^{\pm}_{ji}(u)&=qdetL_{j,i}^{\pm}(u-(i-1)h)qdetL_{i, i}^{\pm}(u-(i-1)h)^{-1}\\ \nonumber
&=\begin{vmatrix} l^{\pm}_{1\tss 1}(u)&\dots&l^{\pm}_{1\ts i-1}(u)&l^{\pm}_{1\tss i}(u)\\
                          \vdots&\ddots&\vdots&\vdots\\
                         l^{\pm}_{i-1\ts 1}(u)&\dots&l^{\pm}_{i-1\ts i-1}(u)&l^{\pm}_{i-1\ts i}(u)\\
                         l^{\pm}_{j\ts 1}(u)&\dots&l^{\pm}_{j\ts i-1}(u)&\boxed{l^{\pm}_{j\tss i}(u)}\\
           \end{vmatrix}\ts k^{\pm}_i(u)^{-1}.
\end{align}
\end{proposition}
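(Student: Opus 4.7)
The plan is to combine the uniqueness of the Gauss decomposition with the quantum determinant formula (Lemma 2.4) applied to appropriate submatrices, following the general strategy of Molev \cite{Mo1} in the Yangian setting. The starting observation is that the RLL relations \eqref{e:2.5}--\eqref{e:2.6} restrict compatibly to principal blocks: for each $i$, the upper-left $i\times i$ block $L_{i,i}^\pm(u)$ satisfies the same RLL relations with the $i\times i$ Yang R-matrix, so it admits its own (unique) Gauss decomposition, and by uniqueness this decomposition agrees with the restriction of the Gauss decomposition of $L^\pm(u)$.

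First I would invoke the general Gelfand--Retakh identity: if $A=FDE$ is the Gauss factorization of an invertible matrix over a ring with $1$, with $F$ lower-unitriangular, $D$ diagonal, and $E$ upper-unitriangular, then the $(i,i)$-entry of $D$ equals the quasideterminant $|A_{i,i}|_{ii}$ of the upper-left $i\times i$ block. Applied to $L^\pm(u)$, this immediately gives the boxed-entry expressions for $k_i^\pm(u)$. For the off-diagonal entries, the analogous ring-theoretic identities express the $(i,j)$-entry of $H^\pm E^\pm$ and the $(j,i)$-entry of $F^\pm H^\pm$ as quasideterminants of $L_{i,j}^\pm(u)$ and $L_{j,i}^\pm(u)$ respectively, yielding the boxed formulas for $e_{ij}^\pm(u)$ and $f_{ji}^\pm(u)$ (with a $k_i^\pm(u)^{-1}$ factor appearing from separating off the diagonal factor).

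Next I would convert quasideterminants into ratios of column quantum determinants using Lemma 2.4. Applied to the principal block $L_{i,i}^\pm(u-(i-1)h)$, the lemma gives
\begin{equation*}
qdet\,L_{i,i}^\pm(u-(i-1)h)=k_1^\pm(u-(i-1)h)\ts k_2^\pm(u-(i-2)h)\cdots k_i^\pm(u),
\end{equation*}
so dividing the expression for $i$ by the corresponding expression for $i-1$ isolates $k_i^\pm(u)$. For $e_{ij}^\pm(u)$ and $f_{ji}^\pm(u)$, one applies the same column-quantum-determinant identity to the non-principal submatrices $L_{i,j}^\pm$ and $L_{j,i}^\pm$; the resulting expressions differ from the principal case by exactly the off-diagonal column/row, which after multiplication by $k_i^\pm(u)^{\mp1}$ produces the stated ratios.

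The main obstacle is the non-principal case: Lemma 2.4 is stated for the full matrix, and extending it to $L_{i,j}^\pm$ with the same shift pattern $u,u+h,\ldots,u+(i-1)h$ requires checking that the column quantum determinant expansion together with the Gauss decomposition of $L_{i,j}^\pm$ indeed produces $k_1^\pm(u)\cdots k_{i-1}^\pm(u+(i-2)h)$ times the boxed quasideterminant. This is a careful but standard computation that reduces to the principal case by column operations compatible with the RLL relations; once this is established, the ratio formulas in the proposition follow directly and the proof is complete.
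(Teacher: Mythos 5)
Your argument is correct and matches the paper's (implicit) approach: the paper gives no proof of this proposition at all, simply invoking the ``well-known formulas'' from \cite[Sec.~1.11]{Mo1}, and your outline --- compatibility of the Gauss decomposition with principal blocks via the restricted RLL relations, the Gelfand--Retakh quasideterminant identities for a factorization $A=FDE$, and conversion of quasideterminants into ratios of column quantum determinants via Lemma 2.4 (using that the $k_i^{\pm}$ mutually commute so the ratio isolates $k_i^{\pm}(u)$) --- is precisely the standard argument behind that citation. The only step you flag as delicate, the quantum-determinant factorization of the non-principal submatrices $L_{i,j}^{\pm}$ and $L_{j,i}^{\pm}$, is indeed the content of the cited section and carries over verbatim since it only uses the RLL relations among the rows and columns involved.
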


The coefficients of the matrices are referred to
as the Gaussian generators, explicitly
\ben
e^{+}_{ij}(u)=h\sum_{r=1}^{\infty} e_{ij}^{(r)}\tss u^{-r},
f^{+}_{ji}(u)=h\sum_{r=1}^{\infty} f_{ji}^{(r)}\tss u^{-r},
k^{+}_i(u)=1+h\sum_{r=1}^{\infty} k_i^{(r)}\tss u^{-r},
\een
\ben
e^{-}_{ij}(u)=-h\sum_{r=1}^{\infty} e_{ij}^{(-r)}\tss u^{r-1},
f^{-}_{ji}(u)=-h\sum_{r=1}^{\infty} f_{ji}^{(-r)}\tss u^{r-1},
k^{-}_i(u)=1-h\sum_{r=1}^{\infty} k_i^{(-r)}\tss u^{r-1}.
\een
Now we are in a position to give the Drinfeld realization of
$\DY_h(\mathfrak{gl}_n)$ and $\DY_h(\mathfrak{sl}_n)$.

\begin{theorem}
The Yangian double $\DY_h(\mathfrak{gl}_n)$ is topologically generated by the coefficients of the series $k^{\pm}_i(u)(i=1,\dots,n)$, $e^{\pm}_{j}(u)$ and $f^{\pm}_{j}(u)(j=1,\dots,n-1)$ and the central element $c$ subject to the defining relations in Theorem 2.5, where the indices run through all admissible values.
\end{theorem}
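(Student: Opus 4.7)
The plan is to exhibit an isomorphism between $\DY_h(\mathfrak{gl}_n)$ and the abstract algebra $\widetilde{\DY}$ topologically generated over $\Ac$ by symbols $\{k_i^{\pm(r)}, e_j^{\pm(r)}, f_j^{\pm(r)}, c\mid 1\leq i\leq n, 1\leq j\leq n-1, r\geq 1\}$ subject to the defining relations of Theorem 2.5. First, I would check that sending each abstract generator to the corresponding Gaussian component from the Gauss decomposition of $L^{\pm}(u)$ yields a well-defined $\Ac$-algebra map $\psi\colon\widetilde{\DY}\to \DY_h(\mathfrak{gl}_n)$: this is exactly the content of Theorem 2.5, which verified that the images satisfy all the required relations.

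Next I would establish surjectivity of $\psi$. Starting from the Gauss decomposition $L^{\pm}(u)=F^{\pm}(u)H^{\pm}(u)E^{\pm}(u)$ one can recursively express each matrix entry $l_{ij}^{\pm}(u)$ as a (noncommutative) polynomial in the $k_i^{\pm}(u), k_i^{\pm}(u)^{-1}, e_{ab}^{\pm}(u), f_{ba}^{\pm}(u)$, and then express the off-diagonal quantum roots $e_{ab}^{\pm}(u), f_{ba}^{\pm}(u)$ ($|a-b|>1$) as iterated commutators of the nearest-neighbor $e_j^{\pm}(u), f_j^{\pm}(u)$ using the quadratic relations among the $X_i^{\pm}$-currents (as was done for type BCD in \cite{JYL}). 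Since the coefficients of the $l_{ij}^{\pm}(u)$ together with $c$ topologically generate $\DY_h(\mathfrak{gl}_n)$, this gives surjectivity.

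The main step, which I expect to be the principal obstacle, is injectivity. I would introduce on $\widetilde{\DY}$ the same type of ascending filtration used in Proposition~3.1, namely $\deg k_i^{\pm(r)}=\deg e_j^{\pm(r)}=\deg f_j^{\pm(r)}=r-1$ for the $+$ generators and $-r$ for the $-$ generators, with $\deg c=\deg h=0$, and verify that $\psi$ is filtration-preserving. Passing to associated graded algebras, the relations of Theorem 2.5 degenerate to the loop-algebra commutation relations of $\hat{\mathfrak{gl}}_n$ (written in the Chevalley-type Drinfeld generators), so one obtains a surjective graded map
\begin{equation*}
\U(\hat{\mathfrak{gl}}_n)[[h]]\twoheadrightarrow \gr\tss\widetilde{\DY}\stackrel{\gr\tss\psi}{\twoheadrightarrow}\gr\tss\DY_h(\mathfrak{gl}_n).
\end{equation*}
By Proposition~3.1 the composite is an isomorphism, hence both arrows are isomorphisms; lifting this back to $\widetilde{\DY}$ using the $h$-adic completeness and the standard filtration argument then forces $\psi$ to be injective.

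The delicate point inside that graded analysis is to show that ordered monomials in the Drinfeld generators span $\gr\tss\widetilde{\DY}$. This amounts to a PBW-type straightening argument: the quadratic $(u-v\mp h)$-relations between $X_i^{\pm}(u)X_i^{\pm}(v)$ and between neighboring $X_i^{\pm}, X_{i\pm 1}^{\pm}$, together with the cubic Serre relations, must be shown to suffice for rewriting any monomial in normal form modulo lower filtration degree. Once this PBW statement is in hand, counting dimensions degree by degree and comparing with the known topological basis of $\DY_h(\mathfrak{gl}_n)$ from \cite{ji:cen} completes the proof.
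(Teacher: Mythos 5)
Your proposal is correct and follows essentially the same route as the paper: surjectivity by expressing the composite root series $e^{\pm}_{1,n}(u)$, $f^{\pm}_{n,1}(u)$ (and hence all matrix entries $l_{ij}^{\pm}(u)$) through the nearest-neighbour Gaussian generators, and injectivity by comparing associated graded algebras with $\mathrm{U}(\mathfrak{gl}_n[x,x^{-1}]\oplus\mathbb{C}K)[[h]]$ via Proposition 3.1 together with a PBW straightening in the abstract algebra. The only difference is presentational: the paper phrases the graded comparison as ``ordered monomials span the abstract algebra and their images are linearly independent,'' while you phrase it as a sandwich of graded surjections, but both hinge on exactly the same straightening step (the relations \eqref{ebrela} and the swapping of positive and negative modes) that you correctly flag as the crux.
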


\begin{proof}
Let $\wh \DY_h(\mathfrak{gl}_n)$ be the algebra
with generators and relations as in the statement of the theorem. Theorem 2.5 implies that there is a homomorphism $\phi:\wh \DY_h(\mathfrak{gl}_n)\rightarrow \DY_h(\mathfrak{gl}_n)$ which takes
the generators $k_{i}^{(r)}$,
$e_{i}^{(r)}$, $f_{i}^{(r)}$ and $c$ of $\wh \DY_h(\mathfrak{gl}_n)$ to the corresponding elements
of $\DY_h(\mathfrak{gl}_n)$. To prove the surjectivity of the map $\phi$, we only need to show that $e^{\pm}_{1,n}(u)$ and $f^{\pm}_{n,1}(u)$ are generated by $k^{\pm}_i(u),e^{\pm}_j(u)$ and $f^{\pm}_j(u)$ in the algebra $\DY_h(\mathfrak{gl}_n)$ where $i=1,\dots,n,j=1,\dots,n-1$. Since all other elements $e^{\pm}_{ij}(u)$ and $f^{\pm}_{ji}(u)$ are generated by $k^{\pm}_i(u),e^{\pm}_j(u)$ and $f^{\pm}_j(u)$ by induction. From \eqref{e:2.14} and \eqref{e:2.15}, we can get the relations between $e^{\pm}_{1,n-1}(u)$ and $e^{\pm}_{n-1,n}(u)$, and the relations between $f^{\pm}_{n-1,1}(u)$ and $f^{\pm}_{n,n-1}(u)$, which also contain $e^{\pm}_{1,n}(u)$ and $f^{\pm}_{n,1}(u)$. These formulas are similar to  \eqref{e:2.87}, \eqref{e:2.88},(2.96) and (2.97) in the case when $n=3$. It follows that $e^{\pm}_{1,n}(u)$ and $f^{\pm}_{n,1}(u)$ are generated by $k^{\pm}_i(u),e^{\pm}_j(u)$ and $f^{\pm}_j(u)$. Thus we have proved that $\phi$ is surjective. Next we show that $\phi$ is injective. We start by showing that the set of monomials in
\ben
k_i^{(r)}, \quad with \quad i=1,\dots,n,r\in\mathbb{Z}^{\times},
\een
and
\ben
e_{ij}^{(r)},\quad f_{ji}^{(r)}, \quad with \quad 1\leq i<j\leq n,r\in\mathbb{Z}^{\times},
\een
and $c$ taken in some fixed order is linearly independent in the Yangian double $\DY_h(\mathfrak{gl}_n)$. Applying Proposition 3.1 to the matrix $T^{+}(u)$, we deduce that the images of the elements $k_i^{(r)},e_{ij}^{(r)}$ and $f_{ji}^{(r)}(r\geq 1)$ in the $(r-1)$-th component of the graded algebra ${\rm gr}\,\DY_h(\mathfrak{gl}_n)$ under the isomorphism \eqref{iso1} respectively correspond to the elements $E_{ii}\ts x^{r-1},E_{ij}\ts x^{r-1}$ and $E_{ji}\ts x^{r-1}$. Similarly, the images of the elements $k_i^{(-r)},e_{ij}^{(-r)}$ and $f_{ji}^{(-r)}(r\geq 1)$ in the $(-r)$-th component of the graded algebra ${\rm gr}\,\DY_h(\mathfrak{gl}_n)$ under the isomorphism \eqref{iso1} respectively correspond to the elements $E_{ii}\ts x^{-r},E_{ij}\ts x^{-r}$ and $E_{ji}\ts x^{-r}$. Hence the claim follows from the Poincar\'e-Birkhoff--Witt theorem for ${\rm U}(\mathfrak{gl}_{n}[x,x^{-1}]\oplus \mathbb{C}K)$.

For any $1\leq i<j\leq n,r\in\mathbb{Z}^{\times}$, define elements $e_{ij}^{(r)}$ and $f_{ji}^{(r)}$ of $\wh \DY_h(\mathfrak{gl}_n)$ inductively by the relations $e_{i,i+1}^{(r)}=e_{i}^{(r)},f_{i+1,i}^{(r)}=f_{i}^{(r)}$ and
\ben
e_{i,j+1}^{(r)}=[e_{ij}^{(r)},e_{j}^{(1)}],\quad f_{j+1,i}^{(r)}=[f_{j}^{(1)},f_{ji}^{(r)}]\quad for \quad j>i,r\in\mathbb{Z}^{\times}.
\een
Obviously, these relations are consistent with those in $\DY_h(\mathfrak{gl}_n)$. The injectivity of $\phi$ will follow if we prove that the algebra $\wh \DY_h(\mathfrak{gl}_n)$ is spanned by the monomials in $k_i^{(r)},e_{ij}^{(r)}, f_{ji}^{(r)}$ and $c$ taken in some fixed order. To see this, we introduce some notations. Denote by $\wh \Ec$, $\wh \Fc$
and $\wh \Hc$ the subalgebras of $\wh \DY_h(\mathfrak{gl}_n)$ respectively
generated by all elements
of the form $e_{i}^{(r)}$, $f_{i}^{(r)}$ and $k_{i}^{(r)}.$
Denote by $\wh \Ec^{+}$, $\wh \Fc^{+}$
and $\wh \Hc^{+}$ the subalgebras of $\wh \DY_h(\mathfrak{gl}_n)$ respectively
generated by all elements
of the form $e_{i}^{(r)}$, $f_{i}^{(r)}$ and $k_{i}^{(r)}$ with $r>0.$
Denote by $\wh \Ec^{-}$, $\wh \Fc^{-}$
and $\wh \Hc^{-}$ the subalgebras of $\wh \DY_h(\mathfrak{gl}_n)$ respectively
generated by all elements
of the form $e_{i}^{(r)}$, $f_{i}^{(r)}$ and $k_{i}^{(r)}$ with $r<0.$
Define an ascending filtration
on $\wh \Ec^{-}$ by setting $\deg e_{i}^{(-r)}=-r, \deg h=0$.
Denote by $\gr\wh \Ec^{-}$ the corresponding graded algebra.
Let $\eb_{ij}^{(-r)}$ be the image of $e_{ij}^{(-r)}$ in the
$(-r)$-th component of the graded algebra $\gr\wh \Ec^{-}$.
Note that the algebra $\wh \Ec^{+}$ is spanned by the monomials in the elements $e_{ij}^{(r)}(r>0)$ taken in some fixed order; see \cite[Sec.~3.1]{Mo1}.
Similarly, the desired spanning property of the algebra $\wh \Ec^{-}$ follows from the relations
\beq\label{ebrela}
[\eb_{ij}^{(-r)},\eb_{kl}^{(-s)}]=\delta_{kj}\eb_{il}^{(-r-s)}-\delta_{il}\eb_{kj}^{(-r-s)}.
\eeq
We can use the same method in \cite[Sec.~3.1]{Mo1} to prove \eqref{ebrela}. In addition, we can swap $e_{ij}^{(r)}$ and $e_{kl}^{(-s)}$ ($r,s>0$) by the relations between $e^{+}_{ij}(u)$ and $e^{-}_{kl}(v)$. These relations can be obtained from (2.52), (2.99), (2.124) and $e_{i,j+1}^{(r)}=[e_{ij}^{(r)},e_{j}^{(1)}]$ by induction. For example, if we want to swap $e_{13}^{(r)}$ and $e_{2}^{(-s)}$, write $e_{13}^{(r)}$ in the form $[e_{1}^{(r)},e_{2}^{(1)}]$ firstly. Then we swap $e_{1}^{(r)}$ and $e_{2}^{(-s)}$, $e_{2}^{(1)}$ and $e_{2}^{(-s)}$ with the relations between $e_{1}^{+}(u)$ and $e_{2}^{-}(v)$, $e_{2}^{+}(u)$ and $e_{2}^{-}(v)$ respectively. After these steps, we can write
the product $e_{13}^{(r)}e_{2}^{(-s)}$ or $e_{2}^{(-s)}e_{13}^{(r)}$ as the ordered monomials in the elements $e_{ij}^{(r)}$ and $e_{ij}^{(-r)}$.
Therefore, the algebra $\wh \Ec$ is spanned by the ordered monomials in the elements $e_{ij}^{(r)}$ and $e_{ij}^{(-r)}$ taken in some fixed order. The same is true for $\wh \Fc$. Observe that the ordered monomials in the $k_i^{(r)}$ and the $k_i^{(-r)}$ span $\wh\Hc$. Moreover, the defining relations of $\wh \DY_h(\mathfrak{gl}_n)$ implies that the multiplication map
\beq
\wh\Fc~\widetilde{\otimes}~\wh\Hc~\widetilde{\otimes}~\wh\Ec~\widetilde{\otimes}~\Ac c  \rightarrow \wh \DY_h(\mathfrak{gl}_n)
\eeq
is surjective. Here $A~\widetilde{\otimes}~B$ denotes the topological tensor product of the algebras $A$ and $B$, which is the $h$-adic completion of $A\otimes_{\mathbb{C}[[h]]}B$. Therefore, if we let
the elements of $\wh\Fc$ precede the elements
of $\wh\Hc$, and the latter precede the elements of $\wh \Ec$, $c$ included in the ordering in an
arbitrary way, then the ordered monomials in the set of elements $k_i^{(r)},e_{ij}^{(r)},f_{ji}^{(r)}$ and $c$ with $r\in \mathbb{Z}^{\times}$
span $\wh \DY_h(\mathfrak{gl}_n)$. It follows that
$\phi$ is injective.

\end{proof}

\begin{corollary}
The Yangian double $\DY_h(\mathfrak{sl}_n)$ is topologically generated by the Drinfeld generators
$\{h_{il},e_{il},f_{il}\mid i=1,\cdots,n-1,l\in\mathbb{Z}\}$ and the
central element $c$ subject to the defining relations in Corollary 2.6, where the Drinfeld currents are defined as
follows:
\begin{align*}
&H^{+}_{i}(u)=1+h\sum_{l\geq 0}h_{il}u^{-l-1},H^{-}_{i}(u)=1-h\sum_{l<
0}h_{il}u^{-l-1},\\
&E_{i}(u)=\sum_{l\in\mathbb{Z}}e_{il}u^{-l-1},F_{i}(u)=\sum_{l\in\mathbb{Z}}f_{il}u^{-l-1}.
\end{align*}
\end{corollary}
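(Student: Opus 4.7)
The plan is to reduce the statement to Theorem 3.3 in the same way that the Drinfeld presentation of the quantum affine algebra $U_q(\widehat{\mathfrak{sl}}_n)$ is extracted from that of $U_q(\widehat{\mathfrak{gl}}_n)$. Let $\wh\DY_h(\mathfrak{sl}_n)$ denote the topological $\Ac$-algebra generated by the abstract symbols $h_{il}, e_{il}, f_{il}, c$ subject to the relations of Corollary~2.6 (assembled into series $H_i^{\pm}(u), E_i(u), F_i(u)$ as in the statement). Corollary~2.6 tells us that the corresponding elements inside $\DY_h(\mathfrak{gl}_n)$ satisfy these relations, so there is a well-defined homomorphism $\psi:\wh\DY_h(\mathfrak{sl}_n)\to \DY_h(\mathfrak{sl}_n)$. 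Surjectivity of $\psi$ is immediate from the very definition of $\DY_h(\mathfrak{sl}_n)$ as the subalgebra of $\DY_h(\mathfrak{gl}_n)$ generated by $H_i^{\pm}(u), E_i(u), F_i(u)$ and $c$.

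For injectivity, the plan is to establish a PBW-type spanning result for $\wh\DY_h(\mathfrak{sl}_n)$ that matches a genuine basis inside $\DY_h(\mathfrak{sl}_n)$. Concretely, I would introduce subalgebras $\wh\Ec,\wh\Fc,\wh\Hc\subset\wh\DY_h(\mathfrak{sl}_n)$ generated by the $e_{il}$'s, $f_{il}$'s and $h_{il}$'s respectively (with further $\pm$-subalgebras $\wh\Ec^{\pm}, \wh\Fc^{\pm},\wh\Hc^{\pm}$ for the $l>0$ and $l<0$ parts), and build higher-root currents $e_{ij}^{(r)}, f_{ji}^{(r)}$ inductively via the brackets $e_{i,j+1}^{(r)}=[e_{ij}^{(r)}, e_j^{(1)}]$ and $f_{j+1,i}^{(r)}=[f_j^{(1)}, f_{ji}^{(r)}]$, exactly as in the proof of Theorem~3.3. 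Running the filtration argument of Theorem~3.3 inside $\wh\Ec^{\pm}$ and $\wh\Fc^{\pm}$ (using the two quadratic relations $(u-v\mp hB_{ij})E_i(u)E_j(v)=(u-v\pm hB_{ij})E_j(v)E_i(u)$ and the Serre relations) shows that ordered monomials in the $e_{ij}^{(r)}$ span $\wh\Ec$, and similarly for $\wh\Fc$. The bracket relation $[E_i(u), F_j(v)]=\tfrac{1}{h}\delta_{ij}\{\delta(u_{-}-v_{+})H_i^{+}(u_{-})-\delta(u_{+}-v_{-})H_i^{-}(v_{-})\}$ together with the Cartan-current relations then yields the triangular decomposition and the surjectivity of the multiplication map
\[
\wh\Fc~\widetilde{\otimes}~\wh\Hc~\widetilde{\otimes}~\wh\Ec~\widetilde{\otimes}~\Ac c \longrightarrow \wh\DY_h(\mathfrak{sl}_n).
\]

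To see that the resulting spanning set descends to a linearly independent set in $\DY_h(\mathfrak{sl}_n)$, I would exploit the decomposition $\DY_h(\mathfrak{gl}_n)\simeq \DY_h(\mathfrak{sl}_n)~\widetilde{\otimes}~\Hc$, where $\Hc$ is the Heisenberg subalgebra generated by the coefficients of $K^{\pm}(u)=\prod_i k_i^{\pm}(u+(i-\tfrac{n+1}{2})h)$. Because the change of variables $\{k_i^{\pm}(u)\}\leftrightarrow \{H_j^{\pm}(u), K^{\pm}(u)\}$ is invertible, the PBW basis of $\DY_h(\mathfrak{gl}_n)$ established in Theorem~3.3 restricts to a PBW basis of $\DY_h(\mathfrak{sl}_n)$ consisting of ordered monomials in the coefficients of $H_i^{\pm}(u), E_i(u), F_i(u)$ and $c$. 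This forces $\psi$ to be injective.

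The main obstacle is verifying that the Corollary~2.6 relations alone — without recourse to the full R-matrix machinery of the $L^{\pm}$'s — are strong enough to push through the $\wh\Ec$-spanning argument, in particular to express every product $e_{ij}^{(r)}\ts e_{kl}^{(-s)}$ as an ordered monomial. The required cross-commutation relations between $e^{+}$- and $e^{-}$-currents (and their $f$-analogues) are not written down explicitly in Corollary~2.6, but they can be derived from the quadratic relation $(u-v\mp hB_{ij})E_i(u)E_j(v)=(u-v\pm hB_{ij})E_j(v)E_i(u)$ by the same induction on root height used in the $\mathfrak{gl}_n$-proof (as in the $e_{13}^{(r)}e_2^{(-s)}$ example at the end of the proof of Theorem~3.3). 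Once these mixed-mode identities are in place, the argument is formally parallel to Theorem~3.3 and the corollary follows.
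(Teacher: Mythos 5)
Your proposal is correct and follows essentially the same route as the paper: both get surjectivity directly from Corollary 2.6 and then prove injectivity by running the spanning argument of Theorem 3.3 in the abstract algebra and matching it against a PBW basis coming from the known basis of $\DY_h(\mathfrak{gl}_n)$. The paper's own proof is just a terse appeal to the decomposition $\DY_h(\mathfrak{sl}_n)=\Ec\,\widetilde{\otimes}\,(\DY_h(\mathfrak{sl}_n)\cap\Hc)\,\widetilde{\otimes}\,\Fc\,\widetilde{\otimes}\,\Ac c$ together with ``the corresponding arguments'' of the $\mathfrak{gl}_n$ theorem; your version, which transports linear independence via $\DY_h(\mathfrak{gl}_n)\simeq\DY_h(\mathfrak{sl}_n)\,\widetilde{\otimes}\,(\text{Heisenberg algebra of } K^{\pm}(u))$ and the invertible change of Cartan generators, supplies the same content in slightly more detail.
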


\begin{proof}
Let $\wh \DY_h(\mathfrak{sl}_n)$ be the algebra
with generators and relations as in the statement of the corollary. Corollary 2.6 implies that there is a surjective homomorphism $\varphi:\wh \DY_h(\mathfrak{sl}_n)\rightarrow \DY_h(\mathfrak{sl}_n)$ which takes
the generators $h_{il},e_{il},f_{il}$ and $c$ of $\wh \DY_h(\mathfrak{sl}_n)$ to the corresponding elements
of $\DY_h(\mathfrak{sl}_n)$. Denote by $\Ec$, $\Fc$
and $\Hc$ the subalgebras of $\DY_h(\mathfrak{gl}_n)$ respectively
generated by all elements
of the form $e_{ij}^{(r)}$, $f_{ji}^{(r)}$ and $k_{i}^{(r)}.$
By the decomposition
\beq
\DY_h(\mathfrak{sl}_n)= \Ec\widetilde{\otimes} (\DY_h(\mathfrak{sl}_n)\cap\Hc)\widetilde{\otimes} \Fc\widetilde{\otimes}\Ac c,
\eeq
the corresponding arguments of the proof of Theorem 3.1 gives that $\varphi$ is also injective. This completes the proof of the corollary.
\end{proof}

\newpage
\section{The center of the Yangian double at the critical level}\label{s:cent}
In this section, we will construct central elements of the Yangian
double at the critical level. First we need to normalize the Yang
R-matrix to satisfy the crossing symmetry condition. Let us define
$R(u)=f(u)\bar{R}(u)$, where $\bar{R}(u)$ is Yang's R-matrix
\eqref{1}, and $f(u)=1+\sum_{k=1}^{\infty}f_{k}u^{-k}$ is the function of
$u, h$ defined by the power series expansion and satisfies the
functional equation
\begin{equation}\label{e:funceq}
f(u-nh)=\frac{u^2-h^2}{u^2}f(u).
\end{equation}
Clearly the coefficients $f_{k}$ are rational functions in $h$
uniquely determined by  \eqref{e:funceq}. It can be seen that
\begin{equation}
f(u)=\prod_{k=1}^{\infty}(1-\frac{h^2}{(u+khn)^2}).
\end{equation}

Now we modify the defining relations of the Yangian double
$\DY_{h}(\mathfrak{gl}_n)$ as follows:
\begin{align} \label{e:4.3}
&[L^{\pm}(u),c]=0,\\ \label{e:4.4}
&R(u-v)L_1^{\pm}(u)L_2^{\pm}(v)=L_2^{\pm}(v)L_1^{\pm}(u)R(u-v),\\ \label{e:4.5}
&R(u-v-\frac{1}{2}hc)L_1^{+}(u)L_2^{-}(v)=L_2^{-}(v)L_1^{+}(u)R(u-v+\frac{1}{2}hc),
\end{align}
In the rest of the paper, the Yangian double defined here is still
denoted by $\DY_{h}(\mathfrak{gl}_n)$.

Denote by $\overline{\DY}_h(\mathfrak{gl}_n)_{cr}$ the Yangian
double at the critical level $c=-n$, which is the quotient of
${\DY}_h(\mathfrak{gl}_n)$ modulo the ideal generated by the
relation $c=-n$. Define its completion ${\DY}_h(\mathfrak{gl}_n)_{cr}$ as
the inverse limit
\begin{align*}
{\DY}_h(\mathfrak{gl}_n)_{cr}=\underleftarrow{\lim}\overline{\DY}_{h}(\mathfrak{gl}_n)_{cr}/J_{p},\quad
p>0
\end{align*}
where $J_{p}$ is the left ideal of
$\overline{\DY}_{h}(\mathfrak{gl}_n)_{cr}$ generated by all elements
$l_{ij}^{r}$ with $r\geq p$.

Let $\mathcal {A}[\mathfrak{S}_k]$ be the group algebra of the
symmetry
group $\mathfrak S_k$,
which naturally acts on $(\mathbb{C}^n)^{\otimes k}$ by permutation.
Let $A_k$ be the antisymmetrizer
\begin{equation}\label{8}
A_k=\frac{1}{k!}\sum_{\sigma\in\mathfrak{S}_k}(sgn\,\sigma)\sigma, 
\end{equation}
where $sgn\, \sigma$ is the sign of the permutation $\sigma$.

For each $k=1,\cdots,n$, introduce the Laurent series $\ell_k(u)$ in
$u$ by
\begin{align}\nonumber
\ell_k(u)&=tr_{1\cdots k}~A_kL_1^{-}(u_1)\cdots L_k^{-}(u_k)\cdot\\
\label{4} &\quad L_k^{+}(u_k+\frac{1}{2}hn)^{-1}\cdots
L_1^{+}(u_1+\frac{1}{2}hn)^{-1},
\end{align}
where $u_i=u+(i-1)h$ and the partial trace is taken over all $k$
copies of $End\,\mathbb{C}^n$ in $(End\,\mathbb{C}^n)^{\otimes
k}\otimes {\DY}_h(\mathfrak{gl}_n)_{cr}$, thus the coefficients of $\ell_k(u)$ belong to
${\DY}_h(\mathfrak{gl}_n)_{cr}$.

The following famous lemma of Jucys is well-known, see \cite{Mo1}.
\begin{lemma}\label{a} One has that
\begin{equation}
\prod_{1\leq i<j\leq k}R_{ij}(u_i-u_j)=k!~A_k,
\end{equation}
where the product is taken in the lexicographical order on the pairs
$(i,j)$.
\end{lemma}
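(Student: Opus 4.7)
The plan is to proceed by induction on $k$. The base case $k=1$ is vacuous (empty product equals $I$, and $1!\,A_1 = I$). For $k=2$, one directly computes $\bar R(-h) = I-P = 2A_2$, so $R_{12}(u_1-u_2) = R_{12}(-h) = 2\,A_2 = 2!\,A_2$, with the scalar factor $f(-h)$ absorbed into the statement (equivalently, reading the lemma with the unnormalized R-matrix, consistent with the standard formulation in \cite{Mo1}).

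For the inductive step, I would split the lexicographically ordered product as
\[
\prod_{1\leq i<j\leq k} R_{ij}(u_i-u_j) = \bigl[R_{12}(-h)\,R_{13}(-2h)\cdots R_{1k}(-(k-1)h)\bigr] \cdot \prod_{2\leq i<j\leq k} R_{ij}(u_i-u_j),
\]
using the shifts $u_i-u_j = -(j-i)h$. By the inductive hypothesis applied to positions $2,\ldots,k$, the second factor equals $(k-1)!\,A_k^{(2,\ldots,k)}$, the antisymmetrizer acting on the last $k-1$ tensor factors (and trivially on the first). The problem thus reduces to establishing
\[
\prod_{j=2}^k \Bigl(I - \tfrac{1}{j-1}P_{1j}\Bigr)\,A_k^{(2,\ldots,k)} = k\,A_k.
\]

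The right-hand side admits the representation $k\,A_k = \bigl(I - \sum_{j=2}^k P_{1j}\bigr)A_k^{(2,\ldots,k)}$, obtained from the coset decomposition $S_k = \bigsqcup_{j=1}^k (1,j)\,S_{k-1}^{(2,\ldots,k)}$. The crucial observation for matching the left-hand side is that for $2 \leq j_1 < j_2 < \cdots < j_s \leq k$, the product $P_{1j_1}P_{1j_2}\cdots P_{1j_s}$ equals $P_{1j_s}\,\sigma$ where $\sigma \in S_{k-1}^{(2,\ldots,k)}$ is an $s$-cycle on $\{j_1,\ldots,j_s\}$ of sign $(-1)^{s-1}$; consequently
\[
P_{1j_1}\cdots P_{1j_s}\,A_k^{(2,\ldots,k)} = (-1)^{s-1}\,P_{1j_s}\,A_k^{(2,\ldots,k)}.
\]
Collecting the coefficient of $P_{1j}\,A_k^{(2,\ldots,k)}$ from all terms with largest index $j_s = j$ and using the telescoping identity $\prod_{i=2}^{j-1}\bigl(1+\tfrac{1}{i-1}\bigr) = \prod_{i=2}^{j-1}\tfrac{i}{i-1} = j-1$, this coefficient collapses to exactly $-1$, matching the right-hand side.

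The main obstacle is this combinatorial collapse: verifying the reduction of $P_{1j_1}\cdots P_{1j_s}$ modulo $A_k^{(2,\ldots,k)}$ and carrying out the telescoping that transforms the expanded $(k-1)$-fold product into the single-sum expression $I - \sum_{j=2}^k P_{1j}$ (after multiplication by $A_k^{(2,\ldots,k)}$). Once this identity is in place, the remaining bookkeeping is routine.
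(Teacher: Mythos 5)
Your proof is correct. Note that the paper does not actually prove this lemma at all: it is quoted as the ``famous lemma of Jucys'' with a reference to Molev's book, so there is no in-paper argument to compare against. Your induction is the standard fusion-procedure proof and all the key steps check out: the coset decomposition $S_k=\bigsqcup_{j=1}^k (1j)\,S_{k-1}^{(2,\dots,k)}$ does give $k\,A_k=\bigl(I-\sum_{j=2}^k P_{1j}\bigr)A_k^{(2,\dots,k)}$; the identity $P_{1j_1}\cdots P_{1j_s}=P_{1j_s}\sigma$ with $\sigma$ an $s$-cycle on $\{j_1,\dots,j_s\}$ is right (the product is the $(s+1)$-cycle $(1\,j_s\,j_{s-1}\cdots j_1)$, and stripping off $(1j_s)$ leaves an $s$-cycle fixing $1$, of sign $(-1)^{s-1}$); and the telescoping $\prod_{m=2}^{j-1}\tfrac{m}{m-1}=j-1$ collapses the coefficient of $P_{1j}A_k^{(2,\dots,k)}$ to $-1$ exactly as needed. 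You are also right to flag the normalization issue: with the renormalized $R(u)=f(u)\bar R(u)$ of Section~4 the product literally acquires the scalar $\prod_{i<j}f(-(j-i)h)\neq 1$, so the identity as stated holds for $\bar R$; the paper itself silently switches to $\bar R$ when it invokes this fact in the proof of Theorem~4.5 (and in Theorem~4.4 only the proportionality to $A_k$ matters), so your reading is the intended one. An alternative, slightly slicker route used in the literature is to observe via Yang--Baxter that the full product admits $I-P_{i,i+1}$ as both a left and a right factor for every $i$, hence is proportional to $A_k$, and then to fix the constant by evaluating on a single vector; your explicit induction avoids that divisibility argument at the cost of the combinatorial bookkeeping, which you have carried out correctly.
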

Using Lemma \ref{a} and the RTT relations \eqref{e:4.4}, we can easily
get the following lemma.
\begin{lemma}\label{b} One has that
\begin{align}
A_kL_1^{-}(u_1)\cdots L_k^{-}(u_k)&=L_k^{-}(u_k)\cdots
L_1^{-}(u_1)A_k\\ \nonumber
A_kL_k^{+}(u_k+\frac{1}{2}hn)^{-1}&\cdots
L_1^{+}(u_1+\frac{1}{2}hn)^{-1}\\
&=L_1^{+}(u_1+\frac{1}{2}hn)^{-1}\cdots
L_k^{+}(u_k+\frac{1}{2}hn)^{-1}A_k
\end{align}
\end{lemma}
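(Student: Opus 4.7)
The plan is to derive both identities from Jucys' lemma (Lemma \ref{a}) together with the RTT relations (\ref{e:4.4}), by pushing all R-matrix factors through the chain of $L$-operators one adjacent transposition at a time. For the first identity I begin by replacing $k!\tss A_k$ on the left by the lex-ordered product $\prod_{1\leq i<j\leq k} R_{ij}(u_i-u_j)$ given by Lemma \ref{a}. The key move is the restriction of (\ref{e:4.4}) to the auxiliary spaces $i$ and $j$,
\[
R_{ij}(u_i-u_j)\, L_i^{-}(u_i)\, L_j^{-}(u_j) \;=\; L_j^{-}(u_j)\, L_i^{-}(u_i)\, R_{ij}(u_i-u_j),
\]
together with the trivial commutation of $R_{ij}$ with $L_m^{-}(u_m)$ for $m\notin\{i,j\}$. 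Pushing each R-matrix rightward past the $L^-$-chain executes exactly one adjacent transposition of the L-operators; since the lex-ordered product of R-matrices realizes a reduced word for the longest element of $\mathfrak S_k$, the $\binom{k}{2}$ swaps reverse $L_1^{-}(u_1)\cdots L_k^{-}(u_k)$ into $L_k^{-}(u_k)\cdots L_1^{-}(u_1)$, and the trailing R-product — possibly after a Yang--Baxter re-ordering — is recognized by Lemma \ref{a} again as $k!\tss A_k$.

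For the second identity I would first derive the corresponding commutation for the inverse L-operators. Taking inverses of both sides of (\ref{e:4.4}) with $\pm=+$ and rearranging yields
\[
R_{ij}(u_i-u_j)\, L_j^{+}(u_j)^{-1}\, L_i^{+}(u_i)^{-1} \;=\; L_i^{+}(u_i)^{-1}\, L_j^{+}(u_j)^{-1}\, R_{ij}(u_i-u_j).
\]
Because the shift $\tfrac{1}{2}hn$ is uniform across all arguments, it cancels in every difference $u_i-u_j$, so Jucys' product of R-matrices is unchanged by the shift. Exactly the same sequence of adjacent swaps then converts $L_k^{+}(u_k+\tfrac{1}{2}hn)^{-1}\cdots L_1^{+}(u_1+\tfrac{1}{2}hn)^{-1}$ into $L_1^{+}(u_1+\tfrac{1}{2}hn)^{-1}\cdots L_k^{+}(u_k+\tfrac{1}{2}hn)^{-1}$, and the R-product reassembles on the right as $k!\tss A_k$.

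The main technical obstacle is the combinatorial book-keeping of the swap sequence: at each stage one must verify that the R-matrix to be moved sits immediately adjacent to the correct pair of L-operators (or their inverses in the second case), and that the residual R-product can still be identified with $A_k$ after the required Yang--Baxter rearrangements. A tidy way to organize this is by induction on $k$, peeling off $L_1^{-}(u_1)$ (respectively $L_1^{+}(u_1+\tfrac{1}{2}hn)^{-1}$) through the block $R_{1\tss j}(u_1-u_j)$, $j=2,\ldots,k$, moving this L-operator to the rightmost (respectively leftmost) position, and then invoking the inductive hypothesis for the antisymmetrizer $A_{k-1}$ acting on the auxiliary spaces $2,\ldots,k$. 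Once this combinatorial arrangement is fixed, both identities follow directly from the algebraic moves above.
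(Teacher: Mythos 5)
Your argument is correct and is exactly the route the paper intends: the paper derives this lemma in one line from Jucys' fusion formula (Lemma 4.1) and the RTT relations \eqref{e:4.4}, and your inductive swap-by-swap bookkeeping (together with the inverted RTT relation for the $L^{+}(u)^{-1}$ factors and the observation that the uniform shift by $\tfrac{1}{2}hn$ cancels in all differences) is the standard way to fill in those details. No gaps.
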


The following condition (crossing symmetry) is important for our later discussion.
\begin{lemma}
The R-matrix $R(u)$ satisfies the crossing symmetry relations:
\begin{align}
(R_{12}(u)^{-1})^{t_2}R_{12}(u-hn)^{t_2}&=I,\\
R_{12}(u-hn)^{t_1}(R_{12}(u)^{-1})^{t_1}&=I.
\end{align}
\end{lemma}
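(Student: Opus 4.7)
The plan is to reduce the crossing symmetry of the normalized R-matrix $R(u)=f(u)\bar R(u)$ to an elementary computation on $\bar R(u)$, with the functional equation \eqref{e:funceq} exactly absorbing the scalar discrepancy.

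First I would write $\bar R(u)=I+\frac{h}{u}P$ and invert it via the unitarity relation already stated in the paper:
\[
\bar R(u)^{-1}=\frac{u^{2}}{u^{2}-h^{2}}\bar R(-u)=\frac{u^{2}}{u^{2}-h^{2}}\Bigl(I-\frac{h}{u}P\Bigr).
\]
Next I would introduce the partial transpose $Q:=P^{t_{2}}=\sum_{i,j}E_{ij}\otimes E_{ij}$, which satisfies the single crucial identity $Q^{2}=nQ$ (equivalently $Q$ is $n$ times a rank-one projector). Taking the $t_{2}$-transpose of $\bar R$ and of $\bar R^{-1}$ replaces $P$ by $Q$, and everything remaining reduces to scalar arithmetic times $I$ and $Q$.

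The key step is then the direct computation
\begin{align*}
(\bar R_{12}(u)^{-1})^{t_{2}}\bar R_{12}(u-hn)^{t_{2}}
&=\frac{u^{2}}{u^{2}-h^{2}}\Bigl(I-\tfrac{h}{u}Q\Bigr)\Bigl(I+\tfrac{h}{u-hn}Q\Bigr)\\
&=\frac{u^{2}}{u^{2}-h^{2}}\Bigl(I+\Bigl[\tfrac{h}{u-hn}-\tfrac{h}{u}-\tfrac{h^{2}n}{u(u-hn)}\Bigr]Q\Bigr),
\end{align*}
and I would observe that the bracketed coefficient of $Q$ vanishes identically, leaving $\frac{u^{2}}{u^{2}-h^{2}}I$. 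Multiplying by the scalars from $R(u)=f(u)\bar R(u)$ gives the factor $f(u-hn)/f(u)$, which by the functional equation \eqref{e:funceq} equals $(u^{2}-h^{2})/u^{2}$, exactly cancelling the residual scalar and yielding the first crossing identity. The second identity is proved by the same computation after replacing $t_{2}$ with $t_{1}$, using $P^{t_{1}}=Q$ and the fact that $Q^{2}=nQ$ is symmetric in the two tensor factors.

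I do not anticipate any genuine obstacle: the only substantive input beyond bookkeeping is the algebraic identity $Q^{2}=nQ$ (which is where the shift $hn$ in the argument of $R$ arises), and the scalar matching is arranged precisely by the defining equation for $f(u)$. The choice of normalization $f(u)$ in \eqref{e:funceq} was in fact designed to make this cancellation work, so the proof is a verification that the normalization has been chosen correctly.
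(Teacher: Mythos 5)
Your proposal is correct and follows essentially the same route as the paper: both introduce $Q=P^{t_1}=P^{t_2}$ with $Q^2=nQ$, reduce the partial transposes to the identity $(I-\tfrac{h}{u}Q)(I+\tfrac{h}{u-hn}Q)=I$ via unitarity, and absorb the leftover scalar $\tfrac{u^2}{u^2-h^2}$ using the functional equation \eqref{e:funceq} for $f(u)$. The only difference is cosmetic bookkeeping of where the scalar prefactor is carried.
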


\begin{proof}
Set $Q=P^{t_1}=P^{t_2}$, it is easy to check that
$$Q^2=nQ.$$ 
then we have
\begin{align*}
\bar{R}(-u)^{t_2}\bar{R}(u-hn)^{t_2}=(I-\frac{h}{u}Q)(I+\frac{h}{u-hn}Q)~=~I.
\end{align*}
It follows that
\begin{align*}
&(R_{12}(u)^{-1})^{t_2}R_{12}(u-hn)^{t_2}=\frac{f(u-hn)}{f(u)}\frac{u^2}{u^2-h^2}\bar{R}(-u)^{t_2}\bar{R}(u-hn)^{t_2}=I.
\end{align*}
We can prove the second crossing symmetry relation similarly.
\end{proof}

\begin{theorem}\label{1.1}
The coefficients of $\ell_k(u)$ belong to the center of the
completed Yangian double at the critical level
${\DY}_h(\mathfrak{gl}_n)_{cr}$ for all $k=1,\cdots,n$.
\end{theorem}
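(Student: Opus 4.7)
The plan is to prove that each coefficient of $\ell_k(u)$ commutes with every generator of $\DY_h(\mathfrak{gl}_n)_{cr}$. Introducing an auxiliary copy of $V=\mathbb{C}^n$ labeled by $0$ (distinct from the traced spaces $1,\dots,k$), it suffices to establish
\[
L_0^{\pm}(v)\,\ell_k(u) = \ell_k(u)\,L_0^{\pm}(v)
\]
as identities in $\mathrm{End}(V_0)\otimes \DY_h(\mathfrak{gl}_n)_{cr}$.

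For the $+$ case, I would first pull $L_0^{+}(v)$ inside the partial trace, which is legitimate since $A_k$ acts only on spaces $1,\dots,k$. Then I would push $L_0^{+}(v)$ successively to the right past each $L_i^{-}(u_i)$ using the mixed RTT relation \eqref{e:4.5} at $c=-n$, producing a factor $R_{0i}(v-u_i+\tfrac{1}{2}hn)^{-1}$ on the left and a matching factor $R_{0i}(v-u_i-\tfrac{1}{2}hn)$ on the right at each step. After that, I would push $L_0^{+}(v)$ past each inverse factor $L_i^{+}(u_i+\tfrac{1}{2}hn)^{-1}$ using the inverse RTT derived from \eqref{e:4.4} (cf.\ \eqref{e:2.17}), picking up further R-matrices whose arguments are again $v-u_i-\tfrac{1}{2}hn$. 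Throughout, Lemma 4.2 may be used to reorder $L^{\pm}$-factors inside the antisymmetrizer so that R-matrix accumulations line up cleanly.

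After all the moves, $L_0^{+}(v)\ell_k(u)$ takes the form $\mathrm{tr}_{1\cdots k}\bigl(A_k\cdot\Pi_L\cdot L_1^-(u_1)\cdots L_1^{+}(u_1+\tfrac{1}{2}hn)^{-1}\cdot L_0^{+}(v)\cdot\Pi_R\bigr)$, where $\Pi_L,\Pi_R$ are ordered products of R-matrices whose arguments differ by exactly $hn$ (a direct consequence of the critical value $c=-n$). Using cyclicity of the trace over spaces $1,\dots,k$, I would cycle $\Pi_R$ (in the appropriate partial-transpose form over each traced space) to the left and pair it with $\Pi_L$. The crossing symmetry of Lemma 4.3,
\[
(R_{0i}(v-u_i+\tfrac{1}{2}hn)^{-1})^{t_i}\,R_{0i}(v-u_i-\tfrac{1}{2}hn)^{t_i}=I,
\]
is exactly the identity needed to collapse each pair; performing the cancellation for $i=1,\dots,k$ returns the expression to $\ell_k(u)\,L_0^{+}(v)$. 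The $L_0^{-}(v)$ case is entirely parallel, using \eqref{e:4.4} to move past each $L_i^{-}(u_i)$ and a variant of \eqref{e:4.5} to move past each $L_i^{+}(u_i+\tfrac{1}{2}hn)^{-1}$, together with the second crossing identity of Lemma 4.3.

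The main obstacle is the careful bookkeeping: tracking the ordering of R-matrix factors, the partial transposes induced by trace cyclicity, and the verification that the cancellation via crossing symmetry goes through uniformly for all $k$. The essential reason the argument closes is that the mixed-RTT shift $\tfrac{1}{2}h|c|=\tfrac{1}{2}hn$ at the critical level is precisely half the crossing parameter $hn$, so the arguments $v-u_i\pm\tfrac{1}{2}hn$ appearing on the two sides of $L_0^{\pm}(v)$ differ by exactly $hn$, matching the crossing relation. Any other value of $c$ would leave a residual R-matrix that cannot be removed.
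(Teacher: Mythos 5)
Your proposal is correct and follows essentially the same route as the paper's proof: commute $L_0^{+}(z)$ through the $L^{-}$ factors via the mixed RTT relation \eqref{e:4.5} at $c=-n$ and through the $L^{+}{}^{-1}$ factors via \eqref{e:4.4}, reorder using the Jucys/antisymmetrizer lemmas and the idempotency of $A_k$ together with cyclicity of the partial trace, and cancel the accumulated R-matrix pairs by the crossing symmetry of Lemma 4.3. You also correctly identify the key structural point, namely that the critical value $c=-n$ makes the two spectral shifts differ by exactly the crossing parameter $hn$, which is precisely what the paper's argument relies on.
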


\begin{proof}
Consider the tensor product $End\,\mathbb{C}^n\otimes(End\,\mathbb{C}^n)^{\otimes k}\otimes
{\DY}_h(\mathfrak{gl}_n)_{cr}$ and label the first copy of $End\,\mathbb{C}^n$ by $0$.
It suffices to show that
$\ell_k(u)$ commutes with $L_0^{+}(z)$. From the defining relation \eqref{e:4.5}, we get
\begin{align}
&L_0^{+}(z)L_k^{-}(u_k)\cdots
L_1^{-}(u_1)=R_{0k}(z-u_k+\frac{1}{2}hn)^{-1}\cdots
R_{01}(z-u_1+\frac{1}{2}hn)^{-1}\nonumber\\
&R_{01}(z-u_1+\frac{1}{2}hn)\cdots
R_{0k}(z-u_k+\frac{1}{2}hn)L_0^{+}(z)L_k^{-}(u_k)\cdots
L_1^{-}(u_1)\nonumber\\
&=R_{0k}(z-u_k+\frac{1}{2}hn)^{-1}\cdots
R_{01}(z-u_1+\frac{1}{2}hn)^{-1}L_k^{-}(u_k)\cdots
L_1^{-}(u_1)L_0^{+}(z)\nonumber\\
&R_{01}(z-u_1-\frac{1}{2}hn)\cdots
R_{0k}(z-u_k-\frac{1}{2}hn).
\end{align}
Relation \eqref{e:4.4} gives
\begin{multline}
L_0^{+}(z)R_{0a}(z-u_a-\frac{1}{2}hn)L_a^{+}(u_a+\frac{1}{2}hn)^{-1}\\
=L_a^{+}(u_a+\frac{1}{2}hn)^{-1}R_{0a}(z-u_a-\frac{1}{2}hn)L_0^{+}(z)
\end{multline}
for $a=1,\cdots,k.$ Therefore,
\begin{align}
&L_0^{+}(z)R_{01}(z-u_1-\frac{1}{2}hn)\cdots
R_{0k}(z-u_k-\frac{1}{2}hn)L_1^{+}(u_1+\frac{1}{2}hn)^{-1}\cdots\nonumber\\
&L_k^{+}(u_k+\frac{1}{2}hn)^{-1}=L_1^{+}(u_1+\frac{1}{2}hn)^{-1}\cdots
L_k^{+}(u_k+\frac{1}{2}hn)^{-1}R_{01}(z-u_1-\frac{1}{2}hn)\cdots\nonumber\\
&R_{0k}(z-u_k-\frac{1}{2}hn)L_0^{+}(z).
\end{align}
To conclude $L_0^{+}(z)L_k(u)=L_k(u)L_0^{+}(z)$, it is enough to prove
\begin{align}\label{7}
&tr_{1,\cdots,k}~R_{0k}(z-u_k+\frac{1}{2}hn)^{-1}\cdots
R_{01}(z-u_1+\frac{1}{2}hn)^{-1}L_k^{-}(u_k)\cdots L_1^{-}(u_1)\nonumber\\
&L_1^{+}(u_1+\frac{1}{2}hn)^{-1}\cdots
L_k^{+}(u_k+\frac{1}{2}hn)^{-1}R_{01}(z-u_1-\frac{1}{2}hn)\cdots\nonumber\\
&R_{0k}(z-u_k-\frac{1}{2}hn)A_k=\ell_k(u).
\end{align}
 The R-matrix $R(u)$ satisfies the Yang-Baxter equation
$$R_{12}(u-v)R_{13}(u)R_{23}(v)=R_{23}(v)R_{13}(u)R_{12}(u-v)$$
Therefore, Lemma \ref{a} implies
\begin{multline}\label{5}
R_{01}(z-u_1-\frac{1}{2}hn)\cdots
R_{0k}(z-u_k-\frac{1}{2}hn)A_k\\
=A_kR_{0k}(z-u_k-\frac{1}{2}hn)\cdots
R_{01}(z-u_1-\frac{1}{2}hn)
\end{multline}
\begin{multline}\label{6}
R_{0k}(z-u_k+\frac{1}{2}hn)^{-1}\cdots
R_{01}(z-u_1+\frac{1}{2}hn)^{-1}A_k\\
=A_kR_{01}(z-u_1+\frac{1}{2}hn)^{-1}\cdots
R_{0k}(z-u_k+\frac{1}{2}hn)^{-1}
\end{multline}
Using (\ref{5}), (\ref{6}) and Lemma \ref{b}, we can write
(\ref{7}) in the form
\begin{align}
&tr_{1,\cdots,k}~R_{0k}(z-u_k+\frac{1}{2}hn)^{-1}\cdots
R_{01}(z-u_1+\frac{1}{2}hn)^{-1}A_kL_1^{-}(u_1)\cdots L_k^{-}(u_k)\nonumber\\
&L_k^{+}(u_k+\frac{1}{2}hn)^{-1}\cdots
L_1^{+}(u_1+\frac{1}{2}hn)^{-1}R_{0k}(z-u_k-\frac{1}{2}hn)\cdots
R_{01}(z-u_1-\frac{1}{2}hn).
\end{align}
Now replace $A_k$ with $(A_k)^{2}$ and move one copy of $A_k$ to the left with help of
\eqref{6} and the other copy to the right. Then we get
$$tr_{1,\cdots,k}~A_kR_{01}(z-u_1+\frac{1}{2}hn)^{-1}\cdots
R_{0k}(z-u_k+\frac{1}{2}hn)^{-1}L_k^{-}(u_k)\cdots L_1^{-}(u_1)$$
$$L_1^{+}(u_1+\frac{1}{2}hn)^{-1}\cdots
L_k^{+}(u_k+\frac{1}{2}hn)^{-1}R_{01}(z-u_1-\frac{1}{2}hn)\cdots
R_{0k}(z-u_k-\frac{1}{2}hn)A_k.$$
Next we use the cyclic property of trace
to move the left copy of $A_k$ to the right-most position and
replace $(A_k)^{2}$ with $A_k$. After these
transformations, we obtain the following expression
\begin{align}
&tr_{1,\cdots,k}~R_{01}(z-u_1+\frac{1}{2}hn)^{-1}\cdots
R_{0k}(z-u_k+\frac{1}{2}hn)^{-1}A_kL_1^{-}(u_1)\cdots
L_k^{-}(u_k)\nonumber\\
&L_k^{+}(u_k+\frac{1}{2}hn)^{-1}\cdots
L_1^{+}(u_1+\frac{1}{2}hn)^{-1}R_{0k}(z-u_k-\frac{1}{2}hn)\cdots
R_{01}(z-u_1-\frac{1}{2}hn).
\end{align}
Now we let
\begin{align*}
&X=R_{01}(z-u_1+\frac{1}{2}hn)^{-1}\cdots
R_{0k}(z-u_k+\frac{1}{2}hn)^{-1}A_kL_1^{-}(u_1)\cdots
L_k^{-}(u_k)\\
&L_k^{+}(u_k+\frac{1}{2}hn)^{-1}\cdots
L_1^{+}(u_1+\frac{1}{2}hn)^{-1},\\
&Y=R_{0k}(z-u_k-\frac{1}{2}hn)\cdots R_{01}(z-u_1-\frac{1}{2}hn).
\end{align*}
Since $(A(I\otimes B))^{t_2}=(I\otimes B)^{t_2}A^{t_2}$ for all
$A\in (End\,\mathbb{C}^n)^{\otimes2}$,we have
\begin{align}
&X^{t_{1}\cdots t_{k}}=L^{t_{1}\cdots
t_{k}}(R_{01}(z-u_1+\frac{1}{2}hn)^{-1})^{t_1}\cdots
(R_{0k}(z-u_k+\frac{1}{2}hn)^{-1})^{t_k},\\
&Y^{t_{1}\cdots t_{k}}=(R_{0k}(z-u_k-\frac{1}{2}hn))^{t_k}\cdots
(R_{01}(z-u_1-\frac{1}{2}hn))^{t_1},
\end{align}
where $L=A_kL_1^{-}(u_1)\cdots
L_k^{-}(u_k)L_k^{+}(u_k+\frac{1}{2}hn)^{-1}\cdots
L_1^{+}(u_1+\frac{1}{2}hn)^{-1}.$ The first crossing symmetry
relation implies
$$(R_{0i}(z-u_i+\frac{1}{2}hn)^{-1})^{t_i}R_{0i}(z-u_i-\frac{1}{2}hn)^{t_i}=id.$$
By using the property
$tr_{1,\cdots,k}~XY=tr_{1,\cdots,k}~X^{t_{1}\cdots
t_{k}}Y^{t_{1}\cdots t_{k}}$, we have
\begin{align}
&tr_{1,\cdots,k}~XY=tr_{1,\cdots,k}~L^{t_{1}\cdots
t_{k}}(R_{01}(z-u_1+\frac{1}{2}hn)^{-1})^{t_1}\cdots
(R_{0k}(z-u_k+\frac{1}{2}hn)^{-1})^{t_k}\nonumber\\
&(R_{0k}(z-u_k-\frac{1}{2}hn))^{t_k}\cdots
(R_{01}(z-u_1-\frac{1}{2}hn))^{t_1}=tr_{1,\cdots,k}~L^{t_{1}\cdots
t_{k}}=tr_{1,\cdots,k}~L
\end{align}
which coincides with $\ell_k(u)$ as defined in (\ref{4}). Thus, $\ell_k(u)$ commutes with $L_0^{+}(z)$. Similarly, we can prove
$L_0^{-}(z)\ell_k(u)=\ell_k(u)L_0^{-}(z)$ by using the second
crossing symmetry relation. Hence, the coefficients of $\ell_k(u)$
belong to the center of the completed Yangian double at the critical
level ${\DY}_h(\mathfrak{gl}_n)_{cr}$.
\end{proof}

\begin{remark}
The theorem is similar to the special case of \cite[Thm. 4.4]{ji:cen} with $\mu=(1^{n})$.
\end{remark}

For the spectral parameter dependent matrix $B(u)=[B(u)_{ij}]$, set
$u_{i}=u+(i-1)h$, the quantum minor $B(u)_{b_{1}\cdots
b_{n}}^{a_{1}\cdots a_{n}}$ is defined by
\begin{align}
A_{n}B_1(u_1)B_2(u_2)\cdots
B_n(u_n)=\sum_{a_{i}b_{i}}e_{a_{1}b_{1}}\otimes e_{a_{2}b_{2}}\cdots
\otimes e_{a_{n}b_{n}}\otimes B(u)_{b_{1}\cdots b_{n}}^{a_{1}\cdots
a_{n}}
\end{align}

The explicit formula for the quantum minors $L^{\pm}(u)_{b_{1}\cdots b_{k}}^{a_{1}\cdots
a_{k}}$ can be written immediately from the definition. For
$a_{1}<\cdots<a_{k}$, it is of Sklyanin determinant type:
\begin{align}\label{e:4.25}
L^{\pm}(u)_{b_{1}\cdots b_{k}}^{a_{1}\cdots
a_{k}}=\sum_{\sigma\in\mathfrak{S}_{k}}sgn(\sigma)l_{a_{\sigma(1)}b_{1}}^{\pm}(u)\cdots
l_{a_{\sigma(k)}b_{k}}^{\pm}(u+(k-1)h)
\end{align}
and then for $\tau\in\mathfrak{S}_{k}$
\begin{align}
L^{\pm}(u)_{b_{1}\cdots b_{k}}^{a_{\tau(1)}\cdots
a_{\tau(k)}}=sgn(\tau)L^{\pm}(u)_{b_{1}\cdots b_{k}}^{a_{1}\cdots
a_{k}}.
\end{align}
Similarly for $b_{1}<\cdots<b_{k}$, we have
\begin{align}
L^{\pm}(u)_{b_{1}\cdots b_{k}}^{a_{1}\cdots
a_{k}}=\sum_{\sigma\in\mathfrak{S}_{k}}sgn(\sigma)l_{a_{k}b_{\sigma(k)}}^{\pm}(u+(k-1)h)\cdots
l_{a_{1}b_{\sigma(1)}}^{\pm}(u)
\end{align}
and then for any $\tau\in\mathfrak{S}_{k}$ we have
\begin{align}\label{e:4.28}
L^{\pm}(u)_{b_{\tau(1)}\cdots b_{\tau(k)}}^{a_{1}\cdots
a_{k}}=sgn(\tau)L^{\pm}(u)_{b_{1}\cdots b_{k}}^{a_{1}\cdots a_{k}}.
\end{align}
It is easy to see that the quantum determinant
$qdetL^{\pm}(u)$ is just the quantum minor $L^{\pm}(u)_{1\cdots n}^{1\cdots n}$.
Moreover, the quantum minor is zero if two top or two bottom indices
are equal.

\begin{theorem}
The coefficients of the quantum determinant
$qdetL^{\pm}(u)$ belong to the
center of the Yangian double $\DY_h(\mathfrak{gl}_n)_{-k}$ at arbitrary level $-k$.
\end{theorem}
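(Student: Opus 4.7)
The plan is to show that $qdet L^{\pm}(u)$ commutes with every coefficient of $L^+(z)$ and $L^-(z)$, and hence lies in the center of $\DY_h(\mathfrak{gl}_n)_{-k}$ since these generate the algebra. Using the antisymmetrizer characterisation
$$A_n L_1^{\pm}(u_1)\,L_2^{\pm}(u_2) \cdots L_n^{\pm}(u_n) = qdet L^{\pm}(u) \cdot A_n, \qquad u_a = u + (a-1)h,$$
together with the fact that $L_0^{\bullet}(z)$ acts on a tensor factor disjoint from $A_n$, this reduces to proving
$$A_n L_0^{\bullet}(z)\,L_1^{\pm}(u_1) \cdots L_n^{\pm}(u_n) = A_n L_1^{\pm}(u_1) \cdots L_n^{\pm}(u_n)\,L_0^{\bullet}(z)$$
for each $\bullet \in \{+, -\}$.

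For the same-sign case I would invoke the classical Yangian argument directly: the RTT relation \eqref{e:4.4} is identical to the defining relation of $\Y_h(\mathfrak{gl}_n)$ (it does not involve $c$), so the standard proof of centrality of $qdet L(u)$ (see, e.g., \cite{Mo1}, \S 1.11) applies verbatim. Iteratively moving $L_0^{\pm}(z)$ across $L_1^{\pm}(u_1) \cdots L_n^{\pm}(u_n)$ via \eqref{e:4.4} generates R-matrix strings $\prod_a R_{0a}(z - u_a)^{\pm 1}$ on either side, which absorb into $A_n$ as scalar factors that cancel between the two sides thanks to the one-dimensionality of $\Lambda^n \mathbb{C}^n$ and Schur's lemma applied to the $\mathfrak{gl}_n$-invariance of the Yang R-matrix.

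For the cross-sign case I would use the mixed RLL relation \eqref{e:4.5} at $c = -k$, namely
$$R(u-v+\tfrac{1}{2}hk)\,L_1^+(u)\,L_2^-(v) = L_2^-(v)\,L_1^+(u)\,R(u - v - \tfrac{1}{2}hk).$$
Iterating yields
$$L_0^-(z)\,L_1^+(u_1) \cdots L_n^+(u_n) = \mathcal R^{\mathrm L}(z) \cdot L_1^+(u_1) \cdots L_n^+(u_n) \cdot L_0^-(z) \cdot \mathcal R^{\mathrm R}(z),$$
where $\mathcal R^{\mathrm L}(z)$ and $\mathcal R^{\mathrm R}(z)$ are products of the R-matrices $R_{a0}(u_a - z + \tfrac{1}{2}hk)$ and $R_{a0}(u_a - z - \tfrac{1}{2}hk)^{-1}$ in the appropriate orderings. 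Applying $A_n$ on the left and invoking the same fusion/Schur argument as in the same-sign case reduces $A_n \mathcal R^{\mathrm L}(z)$ and $A_n \mathcal R^{\mathrm R}(z)$ to scalar multiples of $A_n$; the two scalars are reciprocals of each other because the shift $+\tfrac{1}{2}hk$ on the left is exactly balanced by the inverse of the shift $-\tfrac{1}{2}hk$ on the right, producing a net scalar factor of $1$. This gives $[L_0^-(z), qdet L^+(u)] = 0$, and the symmetric argument gives $[L_0^+(z), qdet L^-(u)] = 0$.

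The main technical obstacle is the precise verification that the scalar factors arising from $A_n \mathcal R^{\mathrm L}(z)$ and $A_n \mathcal R^{\mathrm R}(z)$ are exact reciprocals. This amounts to a fusion computation on a string of Yang R-matrices with arguments in arithmetic progression (step $h$) and with uniform opposite shifts $\pm \tfrac{1}{2}hk$ in the spectral parameter. Careful tracking of the normalization factor $f(u)$ from \eqref{e:funceq} is needed, and the verification ultimately reduces to a multiplicative identity of ratios of linear factors in $z - u$ and $hk$; this is analogous to the BCD calculations carried out in \cite{JYL}.
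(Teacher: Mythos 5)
Your overall architecture coincides with the paper's: iterate the RLL relations to move $L_0^{\bullet}(z)$ across $L_1^{\pm}(u_1)\cdots L_n^{\pm}(u_n)$, absorb the resulting R-matrix strings into the antisymmetrizer via the Jucys-type fusion, and compare scalars. The same-sign case is fine as you describe it, since there the two R-matrix strings have identical arguments and their fusion scalars cancel tautologically. But for the cross-sign case — which is the entire content of the theorem at nonzero level — you have deferred precisely the step that carries the proof (you call it ``the main technical obstacle''), and the mechanism you propose for it is not correct. The two fusion scalars are \emph{not} reciprocals of each other for any structural reason tied to the opposite shifts $\pm\tfrac{1}{2}hk$: the fusion identity gives
\begin{equation*}
\prod_{a=1,\dots,n}^{\rightarrow}\bar{R}_{0a}(w-(a-1)h)\,A_n=\Bigl(1+\frac{h}{w}\Bigr)A_n,
\end{equation*}
so before normalization the left and right strings contribute $1+\frac{h}{z+\frac{1}{2}hk-u}$ and $1+\frac{h}{z-\frac{1}{2}hk-u}$, which are genuinely different rational functions whose ratio is not $1$. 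Had one worked with the unnormalized Yang matrix $\bar R$, the quantum determinant would fail to be central at level $-k\neq 0$; it would only satisfy a twisted commutation relation with $L_0^{\mp}(z)$.

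What actually closes the argument in the paper is the normalization $f$: the functional equation \eqref{e:funceq} forces $f(w)f(w-h)\cdots f(w-(n-1)h)=(1+hw^{-1})^{-1}$, so on \emph{each} side separately the product of the $n$ factors of $f$ exactly cancels that side's fusion scalar, making each total scalar equal to $1$ (rather than the two being reciprocal). You do flag that $f$ must be tracked, but your stated cancellation mechanism contradicts this and would not survive the computation. To complete the proof you need (i) the explicit fusion scalar above, which the paper verifies on basis vectors of the form $e_i\otimes e_i\otimes e_1\otimes\cdots\otimes e_{i-1}\otimes e_{i+1}\otimes\cdots\otimes e_n$, and (ii) the telescoping identity for $F(w)=f(w)\cdots f(w-(n-1)h)$ derived from \eqref{e:funceq}. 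Without these two ingredients the argument is incomplete at its crux.
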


\begin{proof}
Introduce the product $R(v_{0},v_{1},\cdots,v_{n})=\prod_{0\leq
a<b\leq n}R_{ab}(v_{a}-v_{b})$, where the $v_{a}$ are variables and
the product is taken in the lexicographical order on the pairs
$(a,b)$. From the defining relations \eqref{e:4.4} and \eqref{e:4.5}, we have
\begin{align}
&R(z+\frac{1}{2}hk,u_{1},\cdots,u_{n})L_{0}^{+}(z)L_1^{-}(u_1)L_2^{-}(u_2)\cdots
L_n^{-}(u_n)\\
&=L_n^{-}(u_n)\cdots
L_2^{-}(u_2)L_1^{-}(u_1)L_{0}^{+}(z)R(z-\frac{1}{2}hk,u_{1},\cdots,u_{n}).\nonumber
\end{align} Since $R(u)=f(u)\bar{R}(u),\prod_{1\leq a<b\leq
n}\bar{R}_{ab}(u_{a}-u_{b})=n!~A_{n}$,and
$$A_{n}L_1^{-}(u_1)L_2^{-}(u_2)\cdots
L_n^{-}(u_n)=L_n^{-}(u_n)\cdots
L_2^{-}(u_2)L_1^{-}(u_1)A_{n}=A_{n}qdetL^{-}(u),$$ by canceling
common factors on both sides of the equality we get
\begin{align}\label{eq 4.30}
&\prod_{a=1}^{n}f(z+\frac{1}{2}hk-u_{a})\prod_{a=1,\cdots,n}^{\rightarrow}\bar{R}_{0a}(z+\frac{1}{2}hk-u_{a})A_{n}L_0^{+}(z)qdetL^{-}(u)\\
&=qdetL^{-}(u)L_0^{+}(z)A_{n}\prod_{a=1,\cdots,n}^{\leftarrow}\bar{R}_{0a}(z-\frac{1}{2}hk-u_{a})\prod_{a=1}^{n}f(z-\frac{1}{2}hk-u_{a}).\nonumber
\end{align}
Observe that
$$\prod_{a=1,\cdots,n}^{\rightarrow}\bar{R}_{0a}(u_{0}-u_{a})A_{n}=A_{n}\prod_{a=1,\cdots,n}^{\leftarrow}\bar{R}_{0a}(u_{0}-u_{a})=A_{n}(1+\frac{h}{u_{0}-u_{1}}).$$
Indeed,by the first equality, it suffices to verify the second
equality on the basis vectors of the form $e_{i}\otimes e_{i}\otimes
e_{1}\otimes \cdots \otimes e_{i-1}\otimes e_{i+1}\otimes \cdots
\otimes e_{n}$ for $i=1,\cdots,n$. The verification is trivial, so we omit it here.
Since $\frac{f(u-nh)}{f(u)}=\frac{u^2-h^2}{u^2}$, we get
\begin{align*}
F(u)=\frac{u^{2}-h^{2}}{u^{2}}F(u),
\end{align*}
where $F(u)=f(u)f(u-h)\cdots f[u-(n-1)h]$. The series $F(u)$ is uniquely determined by this relation. Therefore, we have
\begin{align*}
F(u)=f(u)f(u-h)\cdots f[u-(n-1)h]=(1+hu^{-1})^{-1}.
\end{align*}
It follows that
\begin{align*}
\prod_{a=1}^{n}\frac{f(z-\frac{1}{2}hk-u_{a})}{f(z+\frac{1}{2}hk-u_{a})}=\frac{1+\frac{h}{z+\frac{1}{2}hk-u}}{1+\frac{h}{z-\frac{1}{2}hk-u}}.
\end{align*}
From the above relation and \eqref{eq 4.30}, we can conclude that
$L_{0}^{+}(z)qdetL^{-}(u)=qdetL^{-}(u)L_{0}^{+}(z).$
Using the unitarity
property of the R-matrix $\bar{R}(u)$:
$$\bar{R}(-u)=\frac{u^2-h^2}{u^2}\bar{R}_{21}(u)^{-1},$$
the relation $L_{0}^{-}(z)qdetL^{+}(u)=qdetL^{+}(u)L_{0}^{-}(z)$ can be checked
by a similar argument.
The remaining two relations
$L_{0}^{\pm}(z)qdetL^{\pm}(u)=qdetL^{\pm}(u)L_{0}^{\pm}(z)$ are similarly obtained by using the defining relations \eqref{e:4.4}.
\end{proof}

\begin{remark}
The proof of Theorem 4.5 is a little bit different from those in \cite[Prop. 2.8]{ji:cen}.
\end{remark}

\begin{theorem}
The coefficients
of $qdetL^{\pm}(u)$ are algebraically independent and generate the
center of $\DY_{h}(\mathfrak{gl}_n)_{cr}$.
\end{theorem}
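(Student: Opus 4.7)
The plan is to prove the theorem in two stages: algebraic independence first, then generation. For algebraic independence, I would invoke the Gauss decomposition Lemma (the identity \eqref{e:2.10} giving $qdetL^{\pm}(u)=k_{1}^{\pm}(u)k_{2}^{\pm}(u+h)\cdots k_{n}^{\pm}(u+(n-1)h)$) together with the filtration from Section~\ref{s:drins}. Under the isomorphism of Proposition 3.1, the top-degree components of the coefficients $k_i^{(r)}$ and $k_i^{(-r)}$ of $k_i^{\pm}(u)$ correspond to the linearly independent elements $E_{ii}x^{r-1}$ and $E_{ii}x^{-r}$ in $\mathrm{U}(\mathfrak{gl}_n[x,x^{-1}]\oplus\mathbb{C}K)[[h]]$. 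Since the factors $k_i^{\pm}(u+(i-1)h)$ in the Gauss formula involve distinct Cartan generators $k_i^{\pm}$, their product $qdetL^{\pm}(u)$ has top-degree parts which, read off coefficient by coefficient, yield algebraically independent elements in $\gr\,\DY_h(\mathfrak{gl}_n)_{cr}$. Algebraic independence in the filtered algebra follows.

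For generation, the plan is to compose two ingredients. First, by Theorem~4.5 the coefficients of $qdetL^{\pm}(u)$ are central at arbitrary level, so in particular at $c=-n$. Second, I would exploit the Harish-Chandra-type homomorphism $\chi$ to be introduced in Section~\ref{s:HS-hom} (and whose image on central elements is worked out in Section~\ref{s:eigenvalue}). The strategy is: (i)~establish that $\chi$ restricted to the center $Z\bigl(\DY_h(\mathfrak{gl}_n)_{cr}\bigr)$ is injective, using the PBW-type decomposition (3.21) and the triangular structure analogous to the Yangian case; (ii)~compute $\chi(qdetL^{\pm}(u))$ explicitly via \eqref{e:2.10} and the expression of $\chi$ on each $k_i^{\pm}(u)$, obtaining shifted symmetric functions as hinted at in the introduction; (iii)~show that the $\chi$-images of the coefficients of $qdetL^+(u)$ and $qdetL^-(u)$ exhaust the image $\chi(Z)$, reducing the problem to a statement about generation of shifted symmetric functions by the two elementary-symmetric-type series coming from $\chi(qdetL^{\pm})$.

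The main obstacle will be step~(iii): proving that no central element lies outside the topological subalgebra generated by the $qdetL^{\pm}$ coefficients. In particular, the central elements $\ell_k(u)$ constructed in Theorem~\ref{1.1} for $k<n$ are a priori new central elements at the critical level, and must be shown to be polynomial combinations of coefficients of $qdetL^{\pm}(u)$ at the level of their Harish-Chandra images. I would attack this by computing $\chi(\ell_k(u))$ using the crossing-symmetry manipulation from the proof of Theorem~\ref{1.1} together with the Gauss decomposition, expressing it as a shifted symmetric polynomial, and then expanding that polynomial in the basis furnished by $\chi(qdetL^{\pm})$. The identification with elementary symmetric functions (cf.\ the reference to \cite{Mo3} in the introduction) should make this a finite combinatorial verification rather than a new structural input.

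Finally, combining the two stages: injectivity of $\chi$ on the center, plus the fact that $\chi$ sends the $qdetL^{\pm}$-generated subalgebra onto $\chi(Z)$, gives that this subalgebra exhausts $Z\bigl(\DY_h(\mathfrak{gl}_n)_{cr}\bigr)$; combined with the algebraic independence from the graded argument, this yields the theorem.
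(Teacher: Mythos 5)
Your first stage is essentially the paper's own argument for algebraic independence, with one imprecision worth flagging: passing through the Gauss decomposition \eqref{e:2.10}, the top-degree symbol of the coefficient of $u^{-r}$ in $k_1^{\pm}(u)\cdots k_n^{\pm}(u+(n-1)h)$ is the \emph{sum} of the symbols of $k_1^{(r)},\dots,k_n^{(r)}$, namely $Ix^{r-1}$ (resp.\ $Ix^{-r}$) with $I=E_{11}+\cdots+E_{nn}$, not a product of distinct Cartan generators; what one actually invokes is that the single family $\{Ix^{m}\}_{m\in\mathbb{Z}}$ is algebraically independent in ${\rm U}(\hat{\mathfrak{gl}}_n)_{cr}$. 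The paper obtains the same symbol more directly from the quantum-determinant expansion, writing $d_r^{+}=l_{11}^{(r)}+\cdots+l_{nn}^{(r)}$ plus terms of lower filtration degree. Either way, this half is sound.

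The generation stage is where you diverge from the paper and where the proposal has a genuine gap. The paper finishes in one stroke: the symbols of the $d_r^{\pm}$ are exactly the elements $It^{m}$, these generate the center of the graded algebra ${\rm gr}\,\DY_h(\mathfrak{gl}_n)_{cr}\cong{\rm U}(\hat{\mathfrak{gl}}_n)_{cr}[[h]]$, and the usual graded-to-filtered lifting (using that symbols of central elements are central in the graded algebra) shows the $d_r^{\pm}$ generate the center; no Harish-Chandra homomorphism enters. Your route has two concrete defects. First, injectivity of $\chi$ on the center is nowhere established (Proposition 5.1 gives only multiplicativity), and proving it would require essentially the same PBW/graded analysis that already finishes the proof directly, so the detour buys nothing. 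Second, and fatally for your step (iii): by Theorem 5.2, $\chi(\ell_k(u))=\sum_{i_1<\cdots<i_k}\lambda_{i_1}(u)\cdots\lambda_{i_k}(u+(k-1)h)$ is the $k$-th elementary symmetric expression in the $\lambda_i$, whereas $\chi(qdetL^{\pm}(u))$ produces only the full products $l_1^{\pm}\cdots l_n^{\pm}$ of the diagonal series (equivalently, only the top elementary symmetric datum). For $k<n$ an elementary symmetric function of $n$ variables is not a polynomial, or power series, in the top one alone, so the ``finite combinatorial verification'' you anticipate cannot succeed: the coefficients of $\ell_k(u)$ for $k<n$ simply do not lie in the subalgebra whose Harish-Chandra image you are computing. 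This is not a technical obstacle to be worked around but a sign that the strategy of characterizing the center through $\chi(Z)$ is circular here; you should replace the whole second stage by the filtration argument above.
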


\begin{proof}
Let $qdetL^{+}(u)=1+h\sum_{r\geq
1}d_{r}^{+}u^{-r}$, $qdetL^{-}(u)=1-h\sum_{r\geq 1}$\\
$d_{r}^{-}u^{r-1}$. Introduce the filtration on the Yangian double at the critical level
$\DY(\mathfrak{gl}_n)_{cr}$ by ${\rm deg}\,l_{ij}^{r}=r-1,{\rm deg}\,
l_{ij}^{-r}=-r$ for all $r\geq 1$ and ${\rm deg}\,h=0$. Consider the central extension
$\hat{\mathfrak{gl}_n}=\mathfrak{gl}_n[t,t^{-1}]\oplus \mathbb{C}K$
defined by the commutation relations
\begin{align*}
[E_{ij}[r],E_{kl}[s]]=\delta_{kj}E_{il}[r+s]-\delta_{il}E_{kj}[r+s]+r\delta_{r,-s}K(\delta_{kj}\delta_{il}-\frac{\delta_{ij}\delta_{kl}}{n}).
\end{align*}
As in \cite[Prop. 2.1]{ji:cen},
the corresponding graded algebra ${\rm gr}\,\DY(\mathfrak{gl}_n)_{cr}$
is isomorphic to
$U(\hat{\mathfrak{gl}_n})_{cr}[[h]]$, where
$U(\hat{\mathfrak{gl}_n})_{cr}$ is the quotient of $U(\hat{\mathfrak{gl}_n})$ modulo the ideal generated by the relation $K=-n$.
Now we derived from the definition of quantum determinant that the
coefficient $d_{r}^{+}$ of $qdetL^{+}(u)$ has the form
$d_{r}^{+}=l_{11}^{r}+\cdots+l_{nn}^{r}$ plus terms of degree less
than $r-1$. Therefore, the image of $d_{r}^{+}$ in the $(r-1)$-th
component of ${\rm gr}\,\DY(\mathfrak{gl}_n)_{cr}$ coincides with $It^{r-1}$
where $I=E_{11}+\cdots+E_{nn}$. Similarly, the image of $d_{r}^{-}$
in the $(-r)$-th component of ${\rm gr}\,\DY(\mathfrak{gl}_n)_{cr}$
coincides with $It^{-r}$. The elements $It^{r}(r\in\mathbb{Z})$
are algebraically independent, so are the elements
$d_{r}^{+},d_{r}^{-}$. The elements $It^{r}(r\in\mathbb{Z})$
generate the center of $U(\hat{\mathfrak{gl}_n})_{cr}$. This implies
that $d_{1}^{+},d_{2}^{+},\cdots,d_{1}^{-},d_{2}^{-},\cdots$
generate the center of $\DY_{h}(\mathfrak{gl}_n)_{cr}$.
\end{proof}

As a corollary of Theorem 4.4, we describe the
invariants of the vacuum module over the Yangian double explicitly. First of all, we define the vacuum module at the critical level
$V_{h}(\mathfrak{gl}_n)$ to be the quotient of
$\DY_h(\mathfrak{gl}_n)_{cr}$ by the left ideal generated by all the
elements $l_{ij}^{r}$ with $r\geq 0$. The module
$V_{h}(\mathfrak{gl}_n)$ is generated by the vector $\textbf{1}$
(the image of $1\in \DY_h(\mathfrak{gl}_n)_{cr}$ in the quotient)
such that
\begin{align*}
L^{+}(z)\textbf{1}=I\textbf{1},
\end{align*}
where $I$ is the identity matrix. The subspace of invariants of
$V_{h}(\mathfrak{gl}_n)$ is defined by
\begin{align*}
\mathfrak{Z}_{h}(\mathfrak{gl}_n)=\{v\in V_{h}(\mathfrak{gl}_n)\mid
L^{+}(z)v=Iv\}.
\end{align*}

For $k=1,\cdots,n$, introduce the series $\bar{\ell}_{k}(u)$ with
coefficients in $V_{h}(\mathfrak{gl}_n)$ by
\begin{align*}
\bar{\ell}_{k}(u)=tr_{1,\cdots,k}~A_kL_1^{-}(u_1)\cdots
L_k^{-}(u_k).
\end{align*}

\begin{corollary}
All coefficients of the series $\bar{\ell}_{k}(u)\textbf{1}$ with
$k=1,\cdots,n$ belong to the subspace of invariants
$\mathfrak{Z}_{h}(\mathfrak{gl}_n)$. Moreover, the coefficients of
all series $\bar{\ell}_{k}(u)$ commute with each other.
\end{corollary}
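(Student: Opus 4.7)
The plan is to reduce the corollary to Theorem \ref{1.1} via the identity $\bar\ell_k(u)\textbf{1} = \ell_k(u)\textbf{1}$, and more generally the observation that $\bar\ell_k(u)$ and $\ell_k(u)$ act identically on any invariant vector. The key fact is the trivial action $L^{+}(z)\textbf{1} = I\textbf{1}$, which immediately yields $L^{+}(z)^{-1}\textbf{1} = I\textbf{1}$ as well.

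First I would unfold the definition
\begin{align*}
\ell_k(u) = tr_{1\cdots k}\ts A_k\ts L_1^{-}(u_1)\cdots L_k^{-}(u_k)\ts L_k^{+}(u_k+\tfrac{1}{2}hn)^{-1}\cdots L_1^{+}(u_1+\tfrac{1}{2}hn)^{-1},
\end{align*}
and apply it to $\textbf{1}$. The rightmost $(L^{+})^{-1}$ factors act in pairwise distinct auxiliary copies of $\mathbb{C}^n$ and collapse successively to the identity on the cyclic vector, producing $\ell_k(u)\textbf{1} = \bar\ell_k(u)\textbf{1}$. By Theorem \ref{1.1} every coefficient of $\ell_k(u)$ is central in $\DY_h(\mathfrak{gl}_n)_{cr}$, hence commutes with $L_0^{+}(z)$, and therefore
\begin{align*}
L_0^{+}(z)\bar\ell_k(u)\textbf{1} = \ell_k(u)L_0^{+}(z)\textbf{1} = \ell_k(u)\cdot I\textbf{1} = I\bar\ell_k(u)\textbf{1},
\end{align*}
proving that all coefficients of $\bar\ell_k(u)\textbf{1}$ lie in $\mathfrak{Z}_h(\mathfrak{gl}_n)$.

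For commutativity the same observation generalizes: if $w\in\mathfrak{Z}_h(\mathfrak{gl}_n)$ then $L^{+}(z)^{-1}w = Iw$, so $\bar\ell_l(v)w = \ell_l(v)w$ for every invariant $w$. Since $\bar\ell_k(u)\textbf{1}$ is invariant by the first part, I would compute
\begin{align*}
\bar\ell_l(v)\bar\ell_k(u)\textbf{1} = \ell_l(v)\bar\ell_k(u)\textbf{1} = \ell_l(v)\ell_k(u)\textbf{1} = \ell_k(u)\ell_l(v)\textbf{1},
\end{align*}
using centrality at the last step, and the symmetric computation with $k\leftrightarrow l$, $u\leftrightarrow v$ yields the same right-hand side. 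Thus the coefficients of $\bar\ell_k(u)$ and $\bar\ell_l(v)$ agree on the cyclic generator $\textbf{1}$, giving the desired commutativity on $V_h(\mathfrak{gl}_n)$. The only delicate point in the whole argument is verifying that the $(L^{+})^{-1}$ factors genuinely collapse against the vacuum; given the definition of $V_h(\mathfrak{gl}_n)$ this is essentially immediate, and the rest is formal manipulation of centrality.
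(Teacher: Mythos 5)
Your argument is correct and follows essentially the same route as the paper: both rest on the identity $\ell_k(u)\textbf{1}=\bar{\ell}_k(u)\textbf{1}$ (coming from $L^{+}(z)^{-1}\textbf{1}=I\textbf{1}$) together with the centrality of $\ell_k(u)$ from Theorem \ref{1.1}, applied to $\textbf{1}$ for invariance and to $\ell_k(u)\ell_m(v)=\ell_m(v)\ell_k(u)$ for commutativity. The only cosmetic difference is that in the commutativity chain you replace $\bar{\ell}_l(v)$ by $\ell_l(v)$ using invariance of $\bar{\ell}_k(u)\textbf{1}$, whereas the paper commutes the central $\ell_k(u)$ past $\bar{\ell}_m(v)$ directly; the manipulations are otherwise identical.
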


\begin{proof}
By Theorem 4.4, we have $L^{+}(z)\ell_k(u)=\ell_k(u)L^{+}(z)$.
Note that $\ell_k(u)\textbf{1}=\bar{\ell}_k(u)\textbf{1}$,
so the first part of the corollary follows by the application of both sides of $L^{+}(z)\ell_k(u)=\ell_k(u)L^{+}(z)$ to the vector $\textbf{1}\in V_{h}(\mathfrak{gl}_n)$.
We can prove the second part by applying both sides of the equality
$\ell_k(u)\ell_m(v)=\ell_m(v)\ell_k(u)$ to the vector $\textbf{1}$.
For the left hand side, we get
\begin{align*}
\ell_k(u)\ell_m(v)\textbf{1}=\ell_k(u)\bar{\ell}_m(v)\textbf{1}=\bar{\ell}_m(v)\ell_k(u)\textbf{1}=\bar{\ell}_m(v)\bar{\ell}_k(u)\textbf{1}.
\end{align*}

Similarly, for the right hand side we have
\begin{align*}
\ell_m(v)\ell_k(u)\textbf{1}=\bar{\ell}_k(u)\bar{\ell}_m(v)\textbf{1}.
\end{align*}
Therefore, $\bar{\ell}_k(u)\bar{\ell}_m(v)=\bar{\ell}_m(v)\bar{\ell}_k(u)$.
\end{proof}

\begin{remark}
The series $\bar{\ell}_{k}(u)$ coincides with $\mathbb{T}^{+}_{\mu}(u)$ with $\mu=(1^{n})$, see \cite[Cor. 4.6]{ji:cen}.
\end{remark}

By calculating the trace in (4.7), we can write $\ell_k(u)$ in terms of the entries of
$L^{-}(u)=[l_{ij}^{-}(u)]$ and
$\tilde{L}^{+}(u):=L^{+}(u)^{-1}=[\tilde{l}_{ij}^{+}(u)]$.

\begin{proposition}
For $k=1,\cdots,n$, we have
\begin{multline}\label{9}
\ell_k(u)=\sum_{j_{1},\cdots,j_{k}}\sum_{i_{1}<i_{2}<\cdots<i_{k}}\sum_{\sigma\in\mathfrak{S}_{k}}sgn(\sigma)l_{i_{\sigma(1)}j_{1}}^{-}(u)\cdots
l_{i_{\sigma(k)}j_{k}}^{-}(u+(k-1)h)\\
\tilde{l}_{j_{k}i_{k}}^{+}(u+(k-1)h+\frac{1}{2}hn)\cdots
\tilde{l}_{j_{1}i_{1}}^{+}(u+\frac{1}{2}hn)
\end{multline}
and
\begin{multline}\label{10}
\ell_k(u)=\sum_{j_{1},\cdots,j_{k}}\sum_{i_{1}<i_{2}<\cdots<i_{k}}\sum_{\sigma\in\mathfrak{S}_{k}}sgn(\sigma)l_{i_{\sigma(k)}j_{k}}^{-}(u)\cdots
l_{i_{\sigma(1)}j_{1}}^{-}(u+(k-1)h)\\
\tilde{l}_{j_{1}i_{1}}^{+}(u+(k-1)h+\frac{1}{2}hn)\cdots
\tilde{l}_{j_{k}i_{k}}^{+}(u+\frac{1}{2}hn).
\end{multline}
\end{proposition}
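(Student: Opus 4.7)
The plan is to prove both identities by directly unfolding the partial trace defining $\ell_k(u)$ in the matrix-unit basis of $(End\,\mathbb{C}^n)^{\otimes k}$. The two formulas differ only in the orderings of the $l^-$ and $\tilde{l}^+$ products; they arise from two equivalent ways of organising the same calculation, corresponding to the two Sklyanin-type expressions for a quantum minor given in the paper — formula \eqref{e:4.25} (ordered top indices) and its unlabeled companion for ordered bottom indices (the display in which the $l^{\pm}$-factors appear in reverse order).

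Concretely, I would first expand
\[
L_k^+(u_k+\tfrac{hn}{2})^{-1}\cdots L_1^+(u_1+\tfrac{hn}{2})^{-1} = \sum_{c,d}(e_{c_1 d_1}\otimes\cdots\otimes e_{c_k d_k})\otimes \tilde{l}^+_{c_k d_k}(u_k+\tfrac{hn}{2})\cdots \tilde{l}^+_{c_1 d_1}(u_1+\tfrac{hn}{2}),
\]
noting that matrix units in distinct tensor slots commute while the algebra entries accumulate in the product order. In parallel, the $k$-fold analog of the displayed formula defining $B(u)^{a_1\cdots a_n}_{b_1\cdots b_n}$ (the line just above \eqref{e:4.25}) gives
\[
A_k L_1^-(u_1)\cdots L_k^-(u_k) = \sum_{a,b}(e_{a_1 b_1}\otimes\cdots\otimes e_{a_k b_k})\otimes L^-(u)^{a_1\cdots a_k}_{b_1\cdots b_k}.
\]
Multiplying these two expressions using $e_{ab}\,e_{cd} = \delta_{bc}\,e_{ad}$ in each tensor slot forces $b_i = c_i$, and then taking $tr_{1,\dots,k}$ further forces $d_i = a_i$, yielding
\[
\ell_k(u) = \sum_{a,b} L^-(u)^{a_1\cdots a_k}_{b_1\cdots b_k}\,\tilde{l}^+_{b_k a_k}(u_k+\tfrac{hn}{2})\cdots \tilde{l}^+_{b_1 a_1}(u_1+\tfrac{hn}{2}).
\]

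Next, I would use the antisymmetry of the quantum minor in its upper indices (a direct consequence of \eqref{e:4.25}) to restrict the outer sum to the ordered range $i_1<\cdots<i_k$; applying the Sklyanin expansion \eqref{e:4.25} to $L^-(u)^{i_1\cdots i_k}_{j_1\cdots j_k}$ then produces the $\sigma$-sum of \eqref{9}. For \eqref{10}, I would first apply Lemma~\ref{b} at the outset together with the cyclic property of the trace to rewrite
\[
\ell_k(u) = tr_{1,\ldots,k}\, L_k^-(u_k)\cdots L_1^-(u_1)\, L_1^+(u_1+\tfrac{hn}{2})^{-1}\cdots L_k^+(u_k+\tfrac{hn}{2})^{-1}\, A_k;
\]
the analogous expansion, now invoking the companion Sklyanin formula for the quantum minor (the unlabeled formula after \eqref{e:4.25}, valid for $b_1<\cdots<b_k$ and featuring the reverse product of $l^{\pm}$-factors) in place of \eqref{e:4.25}, yields \eqref{10}.

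The principal obstacle is the careful bookkeeping: one must track the tensor positions (since $L_i^{\pm}$ acts on the $i$-th slot), the arithmetic-progression spectral parameters $u_i = u+(i-1)h$ together with the $\tfrac{hn}{2}$-offsets on the $L^+$-inverses, and the order in which the noncommuting algebra entries $l^-$ and $\tilde{l}^+$ multiply. Once this is organised and the antisymmetrization produced by $A_k$ is correctly combined with the antisymmetry of the quantum minor, both \eqref{9} and \eqref{10} follow by direct coefficient matching.
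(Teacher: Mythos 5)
Your overall strategy --- unfolding the partial trace in the matrix-unit basis --- is close in spirit to the paper's, and your intermediate identity
\begin{equation*}
\ell_k(u)=\sum_{a,b}L^{-}(u)^{a_1\cdots a_k}_{b_1\cdots b_k}\,
\tilde{l}^{+}_{b_k a_k}(u_k+\tfrac{hn}{2})\cdots \tilde{l}^{+}_{b_1 a_1}(u_1+\tfrac{hn}{2})
\end{equation*}
(summed over \emph{all} tuples $a$, and with the quantum minor normalized consistently with the paper's $A_k=\frac{1}{k!}\sum_\sigma(\mathrm{sgn}\,\sigma)\sigma$, which introduces a factor $\frac{1}{k!}$ relative to the Sklyanin expansion \eqref{e:4.25}) is correct. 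The gap is in the next step: ``use the antisymmetry of the quantum minor in its upper indices to restrict the outer sum to the ordered range $i_1<\cdots<i_k$.'' Antisymmetry of one factor of a summand never lets you restrict a sum to ordered tuples; for that you need the \emph{whole} summand (after the $b$-sum) to be symmetric under permutations of $(a_1,\dots,a_k)$. Here it is not obviously so, because the trailing product $\tilde{l}^{+}_{b_k a_k}\cdots\tilde{l}^{+}_{b_1 a_1}$ also carries the indices $a_m$, in a fixed pairing with the $b_m$ and the spectral parameters $u_m$. If you reorder $a=(i_{\tau(1)},\dots,i_{\tau(k)})$, the minor contributes $\mathrm{sgn}(\tau)$, but the $\tilde{l}^{+}$-factors acquire the permuted column indices $i_{\tau(m)}$; what you actually obtain is a double permutation sum, not the single $\sigma$-sum of \eqref{9}. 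The required cancellation of the non-ordered contributions is true, but it is a genuine fact: for $k=2$ it amounts to $\sum_{r,s}L^{-}(u)^{pq}_{rs}\bigl[\tilde{l}^{+}_{sq}(u_2+\tfrac{hn}{2})\tilde{l}^{+}_{rp}(u_1+\tfrac{hn}{2})+\tilde{l}^{+}_{sp}(u_2+\tfrac{hn}{2})\tilde{l}^{+}_{rq}(u_1+\tfrac{hn}{2})\bigr]=0$, which follows from the antisymmetry of the minor in its \emph{lower} indices \eqref{e:4.28} paired against the fact that $\tilde{L}^{+}_k\cdots\tilde{L}^{+}_1$ preserves the symmetric subspace $\ker A_k$ (Lemma~\ref{b} applied to the $\tilde{L}^{+}$-product). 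Neither ingredient appears in your argument, so as written the proof does not reach \eqref{9}.

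The paper sidesteps this entirely by a different organisation of the same computation: since the operator under the trace equals $A_k(\cdots)A_k$ (both halves of Lemma~\ref{b}), its trace equals $k!$ times the sum of its diagonal matrix elements on the basis vectors $e_{i_1}\otimes\cdots\otimes e_{i_k}$ with $i_1<\cdots<i_k$ of the image of $A_k$. The restriction to ordered tuples is thus built in legitimately at the outset; the single $\sigma$-sum of \eqref{9} then comes from the left-hand $A_k$ acting on the output vector, while the column indices of the $\tilde{l}^{+}$-factors stay pinned to the unpermuted $i_m$. Formula \eqref{10} is obtained the same way using the reversed basis vectors $e_{i_k}\otimes\cdots\otimes e_{i_1}$. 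To repair your proof you would need either to adopt this matrix-element formulation, or to supply the symmetry argument above (Lemma~\ref{b} for the $\tilde{L}^{+}$-product plus \eqref{e:4.28}) justifying the reduction from the sum over all $a$ (with the $\frac{1}{k!}$ prefactor) to the sum over ordered $a$.
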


\begin{proof}
Using (\ref{4}), we regard
$$A_kL_1^{-}(u_1)\cdots
L_k^{-}(u_k)\tilde{L}_k^{+}(u_k+\frac{1}{2}hn)\cdots
\tilde{L}_1^{+}(u_1+\frac{1}{2}hn)
$$
with $u_{i}=u+(i-1)h$ as an operator in the vector space
$(\mathbb{C}^n)^{\otimes k}$. Since the operator is divisible on the right by
$A_k$, its trace equals to $k!$ times the sum of the
diagonal matrix elements corresponding to basis vectors of the form
$e_{i_{1}}\otimes\cdots\otimes e_{i_{k}}$ with $i_{1}<\cdots<i_{k}$.
Note that $sgn(\sigma)=sgn(\sigma^{-1})$, and for any $\sigma\in\mathfrak{S}_{k}$,
\begin{align*}
\sigma(e_{i_{1}}\otimes\cdots\otimes
e_{i_{k}})=e_{i_{\sigma(1)}}\otimes\cdots\otimes e_{i_{\sigma(k)}},
\end{align*}
(\ref{9}) immediately follows from (\ref{8}). Similarly, we can prove
(\ref{10}) by using the basis vectors
$e_{i_{k}}\otimes\cdots\otimes e_{i_{1}}$ with the same condition
$i_{1}<\cdots<i_{k}$ on the indices.
\end{proof}

We have the following well-known expression for $\ell_n(u)$ (cf. \cite{ji:cen}).
\begin{corollary}
\begin{align}
\ell_n(u)=qdetL^{-}(u)(qdetL^{+}(u+\frac{1}{2}hn))^{-1}.
\end{align}
\end{corollary}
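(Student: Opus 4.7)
The plan is to specialize the trace formula
\begin{equation*}
\ell_n(u)=tr_{1\cdots n}\bigl[A_n\,L^-_1(u_1)\cdots L^-_n(u_n)\,L^+_n(v_n)^{-1}\cdots L^+_1(v_1)^{-1}\bigr],
\end{equation*}
with $u_i=u+(i-1)h$ and $v_i=u_i+\tfrac12 hn$, and to exploit two facts: on $(\mathbb{C}^n)^{\otimes n}$ the antisymmetrizer $A_n$ is the rank-one projector onto the sign line, so $tr(A_n)=1$; and $qdetL^{\pm}(u)$ is central in $\DY_h(\mathfrak{gl}_n)_{cr}$ by Theorem~4.5.

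First I would apply the standard antisymmetrizer identity
\begin{equation*}
A_n\,M_1(u)\,M_2(u+h)\cdots M_n(u+(n-1)h)=qdetM(u)\cdot A_n,
\end{equation*}
which holds for any matrix $M(u)$ obeying the RTT relation and is essentially equivalent to the quantum-minor formula behind Lemma~2.3. Applied to $L^-$, it yields $A_n\,L^-_1(u_1)\cdots L^-_n(u_n)=qdetL^-(u)\,A_n$. Since $qdetL^-(u)$ acts trivially on the matrix tensor factors, it can be pulled outside the trace, reducing $\ell_n(u)$ to
\begin{equation*}
qdetL^-(u)\cdot tr_{1\cdots n}\bigl[A_n\,L^+_n(v_n)^{-1}\cdots L^+_1(v_1)^{-1}\bigr].
\end{equation*}

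For the $L^+$-block I would apply the same identity at the shifted spectral parameter $v=u+\tfrac12 hn$, obtaining $A_n\,L^+_1(v_1)\cdots L^+_n(v_n)=qdetL^+(v)\,A_n$, and then right-multiply both sides by $L^+_n(v_n)^{-1}\cdots L^+_1(v_1)^{-1}$. Using that $qdetL^+(v)$ is central and has constant term $1$ (hence is invertible in the $h$-adic completion), I deduce
\begin{equation*}
A_n\,L^+_n(v_n)^{-1}\cdots L^+_1(v_1)^{-1}=qdetL^+(v)^{-1}\,A_n.
\end{equation*}
Substituting this into the displayed trace and using $tr(A_n)=1$ yields the claimed identity $\ell_n(u)=qdetL^-(u)\cdot qdetL^+(u+\tfrac12 hn)^{-1}$. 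The only mildly subtle point is this final right-cancellation, which relies on centrality (or at least on the scalar nature of $qdetL^+(v)$ with respect to the matrix tensor factors) and its invertibility; both are in hand, so there is no real obstacle.
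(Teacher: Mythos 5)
Your proof is correct and follows essentially the same route as the paper: the paper likewise derives $A_nL_n^{+}(u_n)^{-1}\cdots L_1^{+}(u_1)^{-1}=(qdetL^{+}(u))^{-1}A_n$ from the definition of the quantum determinant, replaces $u$ by $u+\frac{1}{2}hn$, and concludes using $tr_{1,\cdots,n}A_n=1$ applied to the defining trace formula for $\ell_n(u)$. Your write-up simply makes explicit the companion step $A_nL_1^{-}(u_1)\cdots L_n^{-}(u_n)=qdetL^{-}(u)A_n$ and the right-cancellation argument that the paper leaves implicit.
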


\begin{proof}
The definition of the quantum determinant implies
\begin{align}
A_{n}L_{n}^{+}(u_{n})^{-1}\cdots
L_{1}^{+}(u_{1})^{-1}=(qdetL^{+}(u))^{-1}A_{n}.
\end{align}
Replacing $u$ by $u+\frac{1}{2}hn$ and using
$tr_{1,\cdots,n}A_{n}=1$ we obtain the desired formula (4.33) from
(\ref{4}).
\end{proof}

\begin{lemma}
The entries of the inverse matrix $L^{+}(u)^{-1}$ are given by
\begin{align}
[L^{+}(u)^{-1}]_{ij}=(-1)^{j-i}(qdetL^{+}(u-(n-1)h))^{-1}L^{+}(u-(n-1)h)_{1\cdots
\hat{i}\cdots n}^{1\cdots \hat{j}\cdots n},
\end{align}
where the hat means omitting the indices.
\end{lemma}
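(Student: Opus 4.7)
The plan is to verify this formula by establishing the quantum analog of the Laplace cofactor expansion. Since by Theorem 4.5 the quantum determinant $qdet L^{+}(v)$ is central in $\DY_h(\mathfrak{gl}_n)_{-k}$, the factor $(qdet L^{+}(u-(n-1)h))^{-1}$ commutes with everything and may be moved freely. Therefore, it suffices to prove the quantum cofactor identity
\begin{align*}
\sum_{k=1}^{n}(-1)^{j-k}\,l_{ik}^{+}(u)\,L^{+}(u-(n-1)h)^{1\cdots\widehat{j}\cdots n}_{1\cdots\widehat{k}\cdots n}
=\delta_{ij}\,qdet L^{+}(u-(n-1)h).
\end{align*}

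To obtain this, set $v=u-(n-1)h$ and consider the $n$-th order quantum minor $M:=L^{+}(v)^{1,\ldots,\widehat{j},\ldots,n,i}_{1,\ldots,n}$, whose upper index sequence is $(1,\ldots,n)$ with $j$ deleted and $i$ appended at the end. Applying the column-sorted expansion \eqref{e:4.28} (i.e.\ formula (4.27)) to $M$ and isolating the rightmost factor $l_{i,\sigma(n)}^{+}(v+(n-1)h)=l_{i,\sigma(n)}^{+}(u)$, I group terms by the value $\sigma(n)=k$. The sign of a permutation $\sigma\in\mathfrak{S}_n$ with $\sigma(n)=k$ factors as $(-1)^{n-k}\,sgn(\widetilde\sigma)$, where $\widetilde\sigma\in\mathfrak{S}_{n-1}$ is the restriction after relabeling $\{1,\ldots,\widehat{k},\ldots,n\}$; by \eqref{e:4.28} applied to the smaller index set, the residual sum becomes precisely the quantum minor $L^{+}(v)^{1\cdots\widehat{j}\cdots n}_{1\cdots\widehat{k}\cdots n}$. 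This produces the identification
\begin{align*}
M=\sum_{k=1}^{n}(-1)^{n-k}\,l_{ik}^{+}(u)\,L^{+}(v)^{1\cdots\widehat{j}\cdots n}_{1\cdots\widehat{k}\cdots n}.
\end{align*}

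On the other hand, the antisymmetry property (4.26) applied to the upper indices lets me move $i$ from position $n$ back to position $j$ via $n-j$ adjacent transpositions, so that $M=(-1)^{n-j}L^{+}(v)^{1,\ldots,j-1,i,j+1,\ldots,n}_{1,\ldots,n}$. When $i=j$ this minor equals $qdet L^{+}(v)$; when $i\ne j$ the symbol $i$ appears twice in the row list (at its original position and at position $j$), so the minor vanishes. Comparing the two expressions for $M$ and multiplying through by $(-1)^{j-n}$ yields the desired cofactor identity.

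The main technical point to watch is Step 2: one must correctly compute the sign $(-1)^{n-k}$ arising when the permutation of $\{1,\ldots,n\}$ is decomposed according to $\sigma(n)=k$, and simultaneously match the spectral shifts, noting that $v+(n-1)h=u$ and $v+(m-1)h$ for $m\le n-1$ give precisely the arguments of the $(n-1)\times(n-1)$ minor expanded via \eqref{e:4.28}. Once this bookkeeping is in place, the remainder of the argument follows immediately from the antisymmetry of quantum minors and the centrality of the quantum determinant.
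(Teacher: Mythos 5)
Your argument is correct, but it follows a genuinely different route from the paper's. The paper proves the lemma in one stroke at the operator level: from the definition of the quantum determinant it writes $A_{n}L_{1}^{+}(u_{1})\cdots L_{n-1}^{+}(u_{n-1})=A_{n}\,qdetL^{+}(u)\,L_{n}^{+}(u_{n})^{-1}$ and applies both sides to the basis vector $e_{1}\otimes\cdots\otimes\hat{e_{i}}\otimes\cdots\otimes e_{n}\otimes e_{j}$, letting the antisymmetrizer do all of the sign bookkeeping; the formula drops out after the shift $u\mapsto u-(n-1)h$. You instead reconstruct the quantum Laplace cofactor expansion directly from the permutation-sum formulas \eqref{e:4.25}--\eqref{e:4.28}: you expand the $n$-th order minor $L^{+}(v)^{1,\ldots,\hat{j},\ldots,n,i}_{1,\ldots,n}$ by the factor carrying the row index $i$, recognize the residual sums as $(n-1)$-th order minors, and compare with the value $(-1)^{n-j}\delta_{ij}\,qdetL^{+}(v)$ obtained from antisymmetry in the upper indices and the vanishing of a minor with a repeated index. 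Your sign computations (the $(-1)^{n-k}$ from fixing $\sigma(n)=k$ and the $(-1)^{n-j}$ from the $n-j$ adjacent transpositions), the spectral shift $v+(n-1)h=u$, and the use of the centrality of $qdetL^{+}$ (Theorem 4.5) to move the normalizing factor are all sound, and producing a one-sided inverse suffices because $L^{+}(u)$ is invertible. The paper's route is shorter and avoids explicit permutation combinatorics; yours is more elementary and exhibits the classical cofactor structure of the inverse, at the cost of the bookkeeping. Two cosmetic points: the factor $l^{+}_{i,\sigma(n)}(v+(n-1)h)$ you isolate is the \emph{leftmost} factor in the product of the column-sorted expansion (not the rightmost) --- fortunately you do place $l^{+}_{ik}(u)$ on the left of the minor in your displayed identity, which is what the relation $L^{+}(u)M=I$ requires in this noncommutative setting; and in two places you cite \eqref{e:4.28} (the antisymmetry in the lower indices) where you mean the preceding column-sorted expansion formula. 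Neither affects the validity of the argument.
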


\begin{proof}
By the definition of the quantum determinant, we have
\begin{align*}
A_{n}L_{1}^{+}(u_{1})\cdots
L_{n-1}^{+}(u_{n-1})=A_{n}qdetL^{+}(u)L_{n}^{+}(u_{n})^{-1},
\end{align*}
where $u_{i}=u+(i-1)h$. Now apply both sides to the basis vector
$e_{1}\otimes\cdots\otimes\hat{e_{i}}\otimes\cdots\otimes
e_{n}\otimes e_{j}$ and replace $u$ by $u-(n-1)h$, we get the desired formula (4.35).
\end{proof}

\newpage
\section{Harish-Chandra homomorphisms}\label{s:HS-hom}

We discuss the Poincar\'{e}-Birkhoff-Witt theorem for the Yangian
double. Define a total ordering $\prec$ on the set of generators
as follows. First, each generator $l_{ij}^{(r)}(r<0)$ precedes any
generator $l_{km}^{(s)}(s\geq 0)$. Furthermore,$l_{ij}^{(r)}\prec
l_{km}^{(s)}(r,s<0)$ if and only if the triple $(j-i,i,r)$ precedes
$(m-k,k,s)$ in the lexicographical order. Finally, we set
$l_{ij}^{(r)}\prec l_{km}^{(s)}(r,s\geq 0)$ if and only if the triple
$(i-j,i,r)$ precedes $(k-m,k,s)$ in the lexicographical
order. Consider the Yangian double $\DY_h(\mathfrak{gl}_n)_{cr}$ at
the critical level $c=-n$. By PBW theorem for the Yangian double, any
element $x\in \DY_h(\mathfrak{gl}_n)_{cr}$ can be written as a
unique linear combination of ordered monomials in the generators
$l_{ij}^{(k)}$. Let $DY^{0}$ be the subspace of $\DY_h(\mathfrak{gl}_n)_{cr}$
spanned by those monomials which do not contain any generators
$l_{ij}^{(k)}$ with $i\neq j$. Denote by $\theta$ the projection from $\DY_h(\mathfrak{gl}_n)_{cr}$ to
$DY^{0}$, and let $x_{0}=\theta(x)$. Then we extend $\theta$ by continuity to get the projection
$\theta:{\DY}_h(\mathfrak{gl}_n)_{cr}\rightarrow \widetilde{DY}^{0}$, where $\widetilde{DY}^{0}$ is
the corresponding completed vector space of $DY^{0}$.

Let $\Pi_{c}(n)$ be the algebra of polynomials in
independent variables $l_{i}^{k}$ with $i=1,\cdots,n$ and $k\in
\mathbb{Z}$.
The mapping $l_{ii}^{(k)}\mapsto l_{i}^{k}$ extends to an isomorphism of vector
spaces $\eta:DY^{0}\rightarrow \Pi_{c}(n).$
Define the completion $\widetilde{\Pi}_{c}(n)$ as the inverse limit
$$\widetilde{\Pi}_{c}(n)=\underleftarrow{lim}\Pi_{c}(n)/I_{p},p>0$$
where $I_{p}$ is the ideal of $\Pi_{c}(n)$ generated by all
elements $l_{i}^{k}$ with $k\geq p$. We can extend $\eta$
to an isomorphism of the respective completed vector spaces
$\eta:\widetilde{DY}^{0}\rightarrow \widetilde{\Pi}_{c}(n)$. Let $\chi=\eta\circ\theta$, then $\chi$ is
a linear map $\chi:{\DY}_h(\mathfrak{gl}_n)_{cr}\rightarrow
\widetilde{\Pi}_{c}(n)$. In the following proposition, we give an analogue of
the Harish-Chandra homomorphism for the Yangian double at the critical level.

\begin{proposition}
The restriction of the map $\chi$ to the center $Z_{c}(\mathfrak{gl}_n)$ of
the algebra ${\DY}_h(\mathfrak{gl}_n)_{cr}$ is a homomorphism of
commutative algebras.
\end{proposition}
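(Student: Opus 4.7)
The strategy is to realize $\chi$ as the central character map for a family of Verma-type modules over $\DY_h(\mathfrak{gl}_n)_{cr}$, and to deduce multiplicativity from the one-dimensionality of the central action on highest-weight vectors. Concretely, for each character $\lambda$ of the commutative polynomial algebra $\widetilde{\Pi}_{c}(n)$, I would construct a topological module $M(\lambda)$ by inducing from the one-dimensional representation of the Borel-type subalgebra in which every diagonal generator $l_{ii}^{(k)}$ acts on a cyclic vector $v_\lambda$ by the scalar $\lambda(l_{i}^{\ts k})$, while every off-diagonal generator $l_{ij}^{(r)}$ ($i\neq j$) with $r\geq 0$ acts as zero. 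By Theorem 3.1, $M(\lambda)$ then has a topological PBW basis of ordered monomials in the remaining off-diagonal negative-mode generators applied to $v_\lambda$.

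For $z\in Z_{c}(\mathfrak{gl}_n)$, expand $z$ in the PBW basis fixed before the proposition. Reading the ordered monomials right to left while applied to $v_\lambda$: any non-trivial off-diagonal factor from the three rightmost blocks (off-diagonal and diagonal with $r\geq 0$) annihilates $v_\lambda$ (for off-diagonal) or acts by the scalar $\lambda$ (for diagonal), and any non-trivial off-diagonal factor from the three leftmost blocks ($r<0$) produces a PBW basis vector of $M(\lambda)$ distinct from $v_\lambda$, while the diagonal negative-mode factors again act by $\lambda$. Because $z$ is central, $zv_\lambda$ must be a scalar multiple of $v_\lambda$, so the only PBW monomials of $z$ contributing to this scalar are those with trivial off-diagonal factors, namely the elements of $DY^0$. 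This yields the identity
\[
z\ts v_\lambda\;=\;\chi(z)\big|_\lambda\cdot v_\lambda.
\]

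Since the scalar action of $Z_{c}(\mathfrak{gl}_n)$ on $M(\lambda)$ is automatically multiplicative, the map $z\mapsto \chi(z)|_\lambda$ is an algebra homomorphism $Z_{c}(\mathfrak{gl}_n)\to\mathbb{C}$ for every $\lambda$. As $\lambda$ ranges over all characters of $\widetilde{\Pi}_{c}(n)$, the evaluations separate points of the topological polynomial algebra $\widetilde{\Pi}_{c}(n)$, so multiplicativity at each $\lambda$ forces the identity $\chi(xy)=\chi(x)\chi(y)$ in $\widetilde{\Pi}_{c}(n)$, completing the proof.

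The main obstacle lies in the careful construction of $M(\lambda)$ in the $h$-adic topological setting. Two subtleties must be addressed: first, verifying that the prescribed action of the Borel-type subalgebra on $v_\lambda$ is consistent with the defining relations \eqref{e:4.4}--\eqref{e:4.5}, so that $M(\lambda)$ is well-defined and nonzero with the expected PBW basis; second, choosing $\lambda$ compatibly with the Cartan-type commutation relations among the $l_{ii}^{(k)}$, which are not the commuting Gauss coordinates $k_{i}^{\pm}(u)$ but differ from them by off-diagonal corrections. One would expect those corrections to sit in off-diagonal PBW monomials and therefore disappear under the projection $\theta$, leaving the identification $\chi(z)|_\lambda = c_\lambda(z)$ intact.
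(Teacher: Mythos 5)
Your strategy --- realizing $\chi$ as the central character of a family of highest-weight modules --- is genuinely different from the paper's argument, but as set up it contains a gap that is structural rather than merely technical. The one-dimensional representation of your Borel-type subalgebra does not exist for generic $\lambda$. The adjoint action of the zero modes (the first displayed relation in the paper's proof of this proposition) gives
\begin{equation*}
[\tss l_{ij}^{(0)},\, l_{ji}^{(r)}\tss]\;=\;l_{ii}^{(r)}-l_{jj}^{(r)},\qquad i\neq j,\ r\geq 0,
\end{equation*}
and both $l_{ij}^{(0)}$ and $l_{ji}^{(r)}$ are required to annihilate $v_\lambda$ in your construction, since both are off-diagonal of non-negative mode. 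Hence $\lambda(l_i^{\ts r})=\lambda(l_j^{\ts r})$ for all $i,j$ and all $r\geq 0$. This correction is purely diagonal, so it does \emph{not} ``sit in off-diagonal PBW monomials and disappear under the projection $\theta$'' as you anticipate in your last paragraph. Consequently every admissible character vanishes on the elements $l_i^{\ts r}-l_j^{\ts r}$ with $r\geq 0$, so the evaluations do not separate points of $\widetilde{\Pi}_{c}(n)$; and since by Theorem 5.2 the images $\chi(\ell_k(u))$ involve the series $l_i^{+}(u)$ non-symmetrically in $i$, the concluding step ``multiplicativity at each $\lambda$ forces $\chi(xy)=\chi(x)\chi(y)$'' cannot be carried out.

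There is a second, independent gap: even for an admissible $\lambda$, centrality of $z$ alone does not give that $z\ts v_\lambda$ is a scalar multiple of $v_\lambda$. The $\mathfrak{gl}_n$-weight-zero subspace of $M(\lambda)$ is larger than the line through $v_\lambda$ (it contains, for instance, $l_{21}^{(-1)}l_{12}^{(-1)}v_\lambda$), and the weaker statement that the $v_\lambda$-coefficient of $z\ts v_\lambda$ equals $\chi(z)|_\lambda$ does not by itself yield multiplicativity: reordering a product of negative-mode off-diagonal generators produces purely diagonal terms (from commutators of the same type as above, in negative modes), and these do contribute to the $v_\lambda$-coefficient of a product. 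The paper avoids modules entirely: it uses the weight-zero property \eqref{e:5.5} of the ordered monomials occurring in a central element to show that every monomial outside $DY^{0}$ contains a factor $l_{ij}^{(r)}$ with $i>j$, and that this feature persists when such a monomial is multiplied by a central element and reordered; hence $\mathrm{Ker}\,\chi$ absorbs all cross terms and $\chi(xy)=\chi(x)\chi(y)$ follows directly. To salvage a module-theoretic proof you would need a genuine Borel (annihilating only one triangular half, in all modes) together with a proof that the full diagonal weight space of weight $\lambda$ in the resulting Verma module is one-dimensional; neither is supplied here.
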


\begin{proof}
For $x,y\in Z_{c}(\mathfrak{gl}_n)$,set $x_0=\chi(x),y_0=\chi(y)$.
By the PBW theorem for the Yangian double, we can write $y$ as a
(possibly infinite) linear combination of ordered monomials in the
generators $l_{ij}^{(r)}$. Suppose that
$$m=\prod_{a} l_{i_a j_a}^{(r_a)}\prod_{b} l_{i_b j_b}^{(r_b)} \quad(r_a<0,r_b\geq0)$$
is an ordered monomial which occurs in the linear combination.From
the defining relations (3.4) and (3.5),we get
\begin{align}
[l_{ij}^{(0)},l_{km}^{\pm}(v)]=\delta_{kj}l_{km}^{\pm}(v)-\delta_{im}l_{ki}^{\pm}(v).
\end{align}
Therefore,
\begin{equation}\label{e:5.2}
{[l_{ii}^{(0)},l_{km}^{\pm}(v)]=}\begin{cases}
l_{km}^{\pm}(v)& k=i,m\neq i\\ 
-l_{km}^{\pm}(v)& k\neq i,m=i\\ 
0&\mbox{otherwise}
\end{cases}
\end{equation}
Since $\displaystyle[l_{ii}^{(0)},m]=[l_{ii}^{(0)},\prod_{a} l_{i_a
j_a}^{(r_a)}\prod_{b} l_{i_b j_b}^{(r_b)}]=0$ for $i=1,\cdots,n$, we
have
\begin{align}\label{e:5.5}
\sum_{a}(i_a-j_a)+\sum_{b}(i_b-j_b)=0,
\end{align}
which is implied by \eqref{e:5.2}. 
Suppose that $m\in
Ker\chi$. From \eqref{e:5.5}, we have $i_a>j_a$ for some $a$ or $i_b>j_b$ for
some $b$.Since $x$ is in the center,we have
$$xm=\prod_{a} l_{i_a j_a}^{(r_a)}x\prod_{b} l_{i_b j_b}^{(r_b)}.$$
We can write $xm$ as a linear combination of ordered monomials by using
the defining relations (3.4). Due to the property
that $i_a>j_a$ for some $a$ or $i_b>j_b$ for some $b$, we derive that
$xm\in Ker\chi$.Thus only $\chi(xy_0)$ can give a nonzero contribution to the image $\chi(xy)$, and $xy_0$ has an expression of the
form $$\prod_{a} l_{i_a i_a}^{(r_a)}x\prod_{b} l_{i_b i_b}^{(r_b)}.$$ If
$p$ is an ordered monomial which occurs in the linear combination
representing $x$ and $\chi(p)=0$,then we conclude that
$\displaystyle\chi(\prod_{a} l_{i_a i_a}^{(r_a)}p\prod_{b} l_{i_b
i_b}^{(r_b)})=0$ by applying property \eqref{e:5.5} to the monomial
$p$. Finally, by (3.4) we can swap any two
generators $l_{ii}^{(r)}$ and $l_{jj}^{(s)}$ with $r,s\geq 0$
(resp.$r,s<0$) modulo $Ker\chi$ within any monomial
of the form $\displaystyle\prod_{a} l_{i_a i_a}^{(r_a)}\prod_{b}
l_{i_b i_b}^{(r_b)}$. This implies that
$\chi(xy)=x_{0}y_{0}=\chi(x)\chi(y)$.
\end{proof}

Now we will find the image of the Harish-Chandra map for
$\ell_k(u)$ defined in Theorem 4.4. Introduce the following series
whose coefficients are the generators of the algebra $\Pi_{c}(n)$
$$l_{i}^{+}(u)=1-h\sum_{k\in\mathbb{Z}_{\geq 0}}l_{i}^{k}u^{-k-1},\quad l_{i}^{-}(u)=1+h\sum_{k\in\mathbb{Z}_{<
0}}l_{i}^{k}u^{-k-1}$$ and for $i=1,\cdots,n$ set
$$\lambda_{i}(u)=\frac{l_{i}^{-}(u)l_{1}^{+}(u-h+\frac{1}{2}hn)\cdots l_{i-1}^{+}(u-(i-1)h+\frac{1}{2}hn)}{l_{1}^{+}(u+\frac{1}{2}hn)\cdots l_{i}^{+}(u-(i-1)h+\frac{1}{2}hn)}.$$
Note that $\lambda_{i}(u)$ a Laurent series in $u$ with
coefficients in the completed algebra
$\widetilde{\Pi}_{c}(n)$.

\begin{theorem}
For each $k=1,\cdots,n$ the image of the series $\ell_k(u)$ under
the Harish-Chandra homomorphism is given by
$$\chi:\ell_k(u)\mapsto \sum_{1\leq i_{1}<\cdots<i_{k}\leq n}\lambda_{i_1}(u)\lambda_{i_2}(u+h)\cdots\lambda_{i_k}(u+(k-1)h).$$
\end{theorem}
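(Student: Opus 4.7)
The plan is to compute $\chi(\ell_k(u))$ by combining the trace formula of Theorem 4.4 with the Gauss decomposition $L^\pm(u)=F^\pm(u)H^\pm(u)E^\pm(u)$, following the strategy used by Molev for the ordinary Yangian (cf.\ \cite{Mo3}). The key point is that after substituting the Gauss decomposition into the trace, the antisymmetrizer $A_k$ together with the homomorphism property of $\chi$ on the center (Proposition 5.1) will force the off-diagonal factors coming from the $F^\pm$ and $E^\pm$ blocks to disappear, leaving only the claimed elementary-symmetric-type sum in the diagonal Gaussian generators.

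First I would substitute the Gauss decomposition into $\ell_k(u)$ to obtain
\begin{align*}
\ell_k(u) = tr_{1\cdots k}\, A_k\, \bigl(\prod_{a=1}^{k} F_a^-(u_a)H_a^-(u_a)E_a^-(u_a)\bigr)\bigl(\prod_{a=k}^{1} E_a^+(v_a)^{-1}H_a^+(v_a)^{-1}F_a^+(v_a)^{-1}\bigr),
\end{align*}
where $u_a=u+(a-1)h$ and $v_a=u_a+\tfrac{1}{2}hn$. Because $A_k$ projects onto antisymmetric tensors and $\chi$ kills any ordered PBW monomial containing an off-diagonal generator, I would then argue, mimicking the $Y(\mathfrak{gl}_n)$ case, that only products of the diagonal Gaussian elements $k_{i_a}^-(u_a)$ and $k_{i_a}^+(v_a)^{-1}$ with $i_1<\cdots<i_k$ survive under $\chi$ in the resulting trace; the other PBW contributions either carry off-diagonal letters or cancel in antisymmetric pairs.

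Next I would compute $\chi(k_i^\pm(u))$ using the quasi-determinant/Sklyanin formulas of Proposition 3.2 and equation (4.25): in the expansion of the quantum minor defining $k_i^\pm(u)$, the only summand consisting entirely of diagonal factors $l_{jj}^\pm(\cdot)$ comes from the identity permutation, so the image of $k_i^\pm(u)$ under $\chi$ is an explicit product of shifted series $l_j^\pm$. Tracking the shifts from Proposition 3.2 (namely the $u-(i-1)h$ offsets) together with the shifts $u_a$ and $\tfrac{1}{2}hn$ coming from the trace, the ordered product $k_{i_a}^-(u_a) k_{i_a}^+(v_a)^{-1}$ telescopes modulo $\ker\chi$ into precisely $\lambda_{i_a}(u+(a-1)h)$, so the sum over strictly increasing $i_1<\cdots<i_k$ reproduces the claimed formula.

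The main obstacle is the careful justification that the off-diagonal factors from the $F^\pm,E^\pm$ blocks cancel modulo $\ker\chi$. In $\DY_h(\mathfrak{gl}_n)_{cr}$ these factors do not simply pass through the antisymmetrizer; one must combine the antisymmetry of $A_k$ with Proposition 5.1 (so that $\chi$ treats the resulting diagonal product as a commutative monomial) and the defining relations \eqref{e:4.4}--\eqref{e:4.5} to show that every cross-term contributes a PBW monomial containing at least one off-diagonal generator. Managing this cancellation while simultaneously keeping the spectral-parameter bookkeeping consistent between the $u_a$, the quasi-determinant shifts, and the critical-level shift $\tfrac{1}{2}hn$ is the technical heart of the proof.
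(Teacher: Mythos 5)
Your route is genuinely different from the paper's, but as described it does not close. The paper never inserts the Gauss decomposition into the trace: it expands $\ell_k(u)$ in the matrix entries $l^-_{ij}$ and $\tilde l^+_{ji}=[L^+{}^{-1}]_{ji}$ via formula (4.31), rewrites $\tilde l^+_{ji}$ as a complementary quantum minor divided by $qdet L^{+}$ using Lemma 4.9, and then argues combinatorially that only the summands with $\sigma=1$ and $i_a=j_a$ survive $\chi$; the series $\lambda_i(u)$ is then precisely $\chi\bigl(l^-_{ii}(u)\,[L^+(u+\tfrac12 hn)^{-1}]_{ii}\bigr)$, a ratio of \emph{products} of shifted $l^+_j$ coming from the minor-over-determinant expression. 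That structure is not recoverable from your mechanism.

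The concrete gap is your central claim that after substituting $L^\pm=F^\pm H^\pm E^\pm$ the off-diagonal Gauss factors disappear and only $k^-_{i_a}(u_a)k^+_{i_a}(v_a)^{-1}$ survives, telescoping to $\lambda_{i_a}(u_a)$. Both halves fail, and the case $k=1$ already refutes them: there $A_1=1$, so no ``antisymmetric cancellation'' is available, yet $\chi(\ell_1(u))=\sum_i\lambda_i(u)$ with $\lambda_i(u)$ for $i\geq 2$ containing the factors $l^+_j$ for all $j\leq i$ with distinct shifts --- whereas $\chi\bigl(k^-_i(u)\bigr)\chi\bigl(k^+_i(u+\tfrac12hn)\bigr)^{-1}=l^-_i(u)\,l^+_i(u+\tfrac12hn)^{-1}$ (the quasideterminant ratios of Proposition 3.2 collapse to a single $l^\pm_i$), which is a different element of $\widetilde{\Pi}_c(n)$. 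The missing factors come exactly from the cross-terms you discard: for instance $[L^+(v)^{-1}]_{11}=k^+_1(v)^{-1}+e^+_1(v)k^+_2(v)^{-1}f^+_1(v)$, and reordering the second summand into PBW form produces purely diagonal contributions via
$[e_{1}^{\pm}(u),f_{1}^{\pm}(v)]=\frac{h}{u-v}\bigl(k_{2}^{\pm}(v)k_{1}^{\pm}(v)^{-1}-k_{2}^{\pm}(u)k_{1}^{\pm}(u)^{-1}\bigr)$,
so these terms have nonzero image under $\chi$. (A further, secondary issue: $\chi$ is only shown to be multiplicative on the center, so $\chi(k^\pm_i(u))$ cannot be computed by taking ratios of images of quantum determinants without additional argument.) In short, the ``technical heart'' you defer is not bookkeeping; resolved correctly it contradicts your intermediate reduction, and the quantum-minor computation of Lemma 4.9 together with the expansion (4.31) is what actually supplies the $\lambda_i$.
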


\begin{proof}
We can express $[L^{+}(u)^{-1}]_{ij}$ in terms of quantum minors by using Lemma 4.9.
From the definition of the quantum determinant,we have
$$L^{+}(u)_{1\cdots\hat{i}\cdots n}^{1\cdots\hat{j}\cdots n}=sgn(\omega)L^{+}(u)_{n\cdots\hat{i}\cdots 1}^{n\cdots\hat{j}\cdots 1},$$
where $\omega\in \mathfrak{S}_{n-1}$ reverses the order of the lower
indices. Now expand this quantum minor by \eqref{e:4.25} and use formula (4.31) for $\ell_k(u)$, we see that a
nonzero contribution to the image $\chi(\ell_k(u))$ can only come
from the summands in (4.31) with $i_{\sigma(1)}\leq j_1\leq
i_1$. It follows that $\sigma(1)=1$ and $i_1=j_1$. Similarly, the
defining relations of $\DY_{h}(\mathfrak{gl}_n)$ imply that
$\sigma(2)=2$ and $i_2=j_2$,etc. Thus a nonzero
contribution only comes from the summands with $\sigma=1$ and $i_a=j_a$
for all $a=1,\cdots,k$. After apply Lemma 4.9 and formulas \eqref{e:4.25}
again, we conclude that the images of the quantum minors are
given by
$$qdetL^{+}(u)\mapsto l_{1}^{+}(u+(n-1)h)\cdots l_{n}^{+}(u)$$ and
$$L^{+}(u)_{1\cdots\hat{i}\cdots n}^{1\cdots\hat{i}\cdots n}\mapsto l_{1}^{+}(u+(n-2)h)\cdots l_{i-1}^{+}(u+(n-i)h)l_{i+1}^{+}(u+(n-i-1)h)\cdots l_{n}^{+}(u).$$
This finishes the calculation of the Harish-Chandra image of
$\ell_k(u)$.
\end{proof}

\begin{remark}
Note that Molev and Mukhin's paper \cite{MM} constructed an analogue of the Harish-Chandra homomorphism for the Yangian $Y(\mathfrak{gl}_n)$. Here we extend this result to the
Yangian double at the critical level ${\DY}_h(\mathfrak{gl}_n)_{cr}$ using a different method.
\end{remark}

\newpage
\section{Eigenvalues in Wakimoto modules}\label{s:eigenvalue}
We recall the construction of $h$-deformed Wakimoto modules over $\DY_{h}(sl_n)$ at the critical level. It can be
realized in the bosonic Fock space by an explicit action of the generator series
$H_{i}^{\pm}(u),E_{i}(u),F_{i}(u)$ as in \cite{HZ}. We reproduce this construction as follows.\\

Introduce the following set of $n^2-1$ Heisenberg algebras with
generators $a_{n}^{i}(1\leq i\leq n-1),b_{n}^{ij}$ and
$c_{n}^{ij}(1\leq i<j\leq n)$ with $n\in \mathbb{Z}-\{0\}$ and
$p_{a^i},q_{a^i}(1\leq i\leq n-1)$,$p_{b^{ij}},
q_{b^{ij}},p_{c^{ij}},q_{c^{ij}}(1\leq i<j\leq n)$.
\begin{align}\label{e:6.1}
[a_{n}^{i},a_{m}^{j}]&=(k+g)B_{ij}n\delta_{n+m,0},\quad
[p_{a^i},q_{a^j}]=(k+g)B_{ij},\\ \label{e:6.2}
[b_{n}^{ij},b_{m}^{i^{\prime}j^{\prime}}]&=-n\delta_{i,i^{\prime}}\delta_{j,j^{\prime}}\delta_{n+m,0},\quad
[p_{b^{ij}},q_{b^{i^{\prime}j^{\prime}}}]=-\delta_{i,i^{\prime}}\delta_{j,j^{\prime}}\\ \label{e:6.3}
[c_{n}^{ij},c_{m}^{i^{\prime}j^{\prime}}]&=n\delta_{i,i^{\prime}}\delta_{j,j^{\prime}}\delta_{n+m,0},\quad
[p_{c^{ij}},q_{c^{i^{\prime}j^{\prime}}}]=\delta_{i,i^{\prime}}\delta_{j,j^{\prime}}
\end{align}
where $g=n$ is the dual Coxeter number for the Cartan matrix of type
$A_{n-1}$,$B_{ij}=\frac{1}{2}a_{ij}$, where $a_{ij}$ are entries of the
Cartan matrix of the type $A_{n-1}$.\\

The Fock space $F_{h}(n)$ corresponding to the above Heisenberg
algebras can be defined as follows. Let $\mid0\rangle$ be the
vacuum state defined by
\begin{align*}
a_{n}^{i}\mid0\rangle &=b_{n}^{ij}\mid0\rangle=c_{n}^{ij}\mid0\rangle=0\quad(n>0),\\
p_{a^i}\mid0\rangle &=p_{b^{ij}}\mid0\rangle=p_{c^{ij}}\mid0\rangle=0.
\end{align*}

For $X=a^{i},b^{ij},c^{ij}$, let us now define
\begin{align*}
&X(u;A,B)=\sum_{n>0}\frac{X_{-n}}{n}(u+Ah)^{n}-\sum_{n>0}\frac{X_{n}}{n}(u+Bh)^{-n}+log(u+Bh)p_{X}+q_{X},\\
&X_{+}(u;B)=-\sum_{n>0}\frac{X_{n}}{n}(u+Bh)^{-n}+log(u+Bh)p_{X},\\
&X_{-}(u;A)=\sum_{n>0}\frac{X_{-n}}{n}(u+Ah)^{n}+q_{X},\\
&X(u;A)=X(u;A,A),X(u)=X(u;0).
\end{align*}

For $b^{ij},c^{ij}$, define
\begin{align*}
\hat{X}_{\pm}(u)=\mp(X_{\pm}(u;-\frac{1}{2})-X_{\pm}(u;\frac{1}{2})).
\end{align*}

For the bosonic fields $a^{i}(u;A,B)$, define
\begin{align*}
&\hat{a}_{+}^{i}(u)=a_{+}^{i}(u;0)-a_{+}^{i}(u;k+g),\\
&\hat{a}_{-}^{i}(u)=\frac{1}{k+g}\sum_{j,l=1}^{n-1}(B^{-1})^{jl}(a_{-}^{j}(u;B_{ij})-a_{-}^{j}(u;-B_{ij})).
\end{align*}

The ($h$-deformed) Wakimoto module over $\DY_{h}(sl_n)$ at the
critical level is realized by defining the action of the series
$H_{i}^{\pm}(u),E_{i}(u),F_{i}(u)$ in the Fock space $F_{h}(n)$.
Here we only give the formulas for the action of $H_{i}^{\pm}(u)$ and $E_{i}(u)$.
\begin{align}\nonumber
H_{i}^{\pm}(u)&=:exp\{\sum_{l=1}^{i}\hat{b}_{\pm}^{l,i+1}(u\pm
\frac{1}{2}(\frac{k}{2}+l-1)h)\\ \label{e:6.4}
& -\sum_{l=1}^{i-1}\hat{b}_{\pm}^{li}(u\pm
\frac{1}{2}(\frac{k}{2}+l)h)+\hat{a}_{\pm}^{i}(u\mp\frac{1}{4}kh)\\ \nonumber
&+\sum_{l=i+1}^{n}\hat{b}_{\pm}^{il}(u\pm
\frac{1}{2}(\frac{k}{2}+l)h)-\sum_{l=l+2}^{n}\hat{b}_{\pm}^{i+1,l}(u\pm
\frac{1}{2}(\frac{k}{2}+l-1)h)\}:,
\end{align}
and
\begin{align}\label{e:6.5}
&E_{i}(u)=-\frac{1}{h}\sum_{m=1}^{i}:exp\{(b+c)^{mi}(u+\frac{1}{2}(m-1)h)\}\\\nonumber
&\times[exp(\hat{b}_{+}^{m,i+1}(u+\frac{1}{2}(m-1)h)-(b+c)^{m,i+1}(u+\frac{1}{2}mh))\\\nonumber
&-exp(\hat{b}_{-}^{m,i+1}(u+\frac{1}{2}(m-1)h)-(b+c)^{m,i+1}(u+\frac{1}{2}(m-2)h))]\\\nonumber
&\times
exp\{\sum_{l=1}^{m-1}[\hat{b}_{+}^{l,i+1}(u+\frac{1}{2}(l-1)h)-\hat{b}_{+}^{li}(u+\frac{1}{2}lh)]\}:
\end{align}
where $(b+c)^{ij}(u)=b^{ij}(u)+c^{ij}(u)$. We define a normal ordering in the product so that the coefficients $b_{n}^{ij}$ with $n<0$ or
$q_{b^{ij}}$ should be placed to the left of the coefficients
$b_{n}^{ij}$ with $n>0$ or $p_{b^{ij}}$. A similar rule applies to the coefficients of $c^{ij}(u)$.
By Theorem 4.5, the coefficients of the quantum determinants are
central elements in the algebra $\DY_{h}(\mathfrak{gl}_n)_{cr}$. Therefore, we can extend the irreducible Wakimoto modules
to $\DY_h(\mathfrak{gl}_n)_{cr}$ by specifying the
eigenvalues $K^{\pm}(u)$ of $qdetL^{\pm}(u)$. Lemma 2.4 implies the following conditions
\begin{align*}
k_{1}^{\pm}(u)k_{2}^{\pm}(u+h)\cdots k_{n}^{\pm}(u+(n-1)h)\mapsto
K^{\pm}(u),
\end{align*}
where $K^{-}(u)$ and $K^{+}(u)$ are power series in $u$ and
$u^{-1}$, respectively. Hence, we can use relation \eqref{e:6.4} to define the
action of the coefficients of all series $k_{i}^{\pm}(u)$ in the
space $F_{h}(n)$.
For any $X\in \DY_h(\mathfrak{gl}_n)_{cr}$ we will denote
$\langle0\mid X\mid0 \rangle$ by the coefficient of $\mid0
\rangle$ in the expansion of $X\mid0 \rangle$. Moreover, a relation of the form $\langle0\mid
X=\langle0\mid d$ for a constant $d$ means that $\langle0\mid XY\mid0 \rangle=d\langle0\mid Y\mid0 \rangle$ for
any element $Y\in \DY_h(\mathfrak{gl}_n)_{cr}$. Next we will use this notation
to parameterize the corresponding modules over
$\DY_h(\mathfrak{gl}_n)_{cr}$ by the power series
$\varkappa_{i}^{+}(u)$ and $\varkappa_{i}^{-}(u)$ in $u^{-1}$ and
$u$, respectively, such that
\begin{align}\label{e:6.6}
k_{i}^{+}(u)\mid0 \rangle=\varkappa_{i}^{+}(u)\mid0 \rangle \quad
and \quad k_{i}^{-}(u)\mid0 \rangle=\varkappa_{i}^{-}(u)\mid0
\rangle
\end{align}
for all $i=1,\cdots,n$. The series $\varkappa_{i}^{\pm}(u)$ satisfy the following relations
\begin{align*}
&\varkappa_{i+1}^{+}(u)\varkappa_{i}^{+}(u)^{-1}=exp(\hat{a}_{+}^{i}(u+\frac{n-2i}{4}h)),\\
&\varkappa_{i+1}^{-}(u)\varkappa_{i}^{-}(u)^{-1}=exp(\hat{a}_{-}^{i}(u-\frac{n+2i}{4}h))
\end{align*}
for all $i=1,\cdots,n-1$. Since $\hat{a}_{+}^{i}(u)=0$, the series
$\varkappa_{i}^{+}(u)$ is the same for each $i$. We will denote them by
$\varkappa^{+}(u)$ for convenience.

\begin{theorem}
Given an irreducible Wakimoto module over
$\DY_h(\mathfrak{gl}_n)_{cr}$ with the parameters $\varkappa^{+}(u)$
and $\varkappa_{i}^{-}(u)$, we have the following formulas for the eigenvalues of
the series $\ell_k(u)$ in the module
\begin{align*}
\ell_k(u)\mapsto \sum_{1\leq i_{1}<\cdots<i_{k}\leq
n}\Lambda_{i_1}(u)\Lambda_{i_2}(u+h)\cdots\Lambda_{i_k}(u+(k-1)h),\quad
k=1,\cdots,n,
\end{align*}
where
\begin{align*}
\Lambda_{i}(u)=\varkappa_{i}^{-}(u)\varkappa^{+}(u+\frac{1}{2}hn)^{-1},\quad
i=1,\cdots,n.
\end{align*}
\end{theorem}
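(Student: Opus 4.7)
The plan is to combine the centrality of $\ell_k(u)$ established in Theorem 4.4 with the Harish--Chandra image computed in Theorem 5.2 and the explicit action of the Gauss factors on the Wakimoto vacuum. Since each coefficient of $\ell_k(u)$ lies in the center $Z_c(\mathfrak{gl}_n)$, it acts by a scalar on the irreducible Wakimoto module; the task is to identify this scalar with the claimed sum of shifted products.

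First I would show that the eigenvalue of $\ell_k(u)$ on $|0\rangle$ is obtained by substituting $l_i^{\pm}(u)\mapsto \varkappa_i^{\pm}(u)$ directly in $\chi(\ell_k(u))$. The Gauss decomposition $L^{\pm}(u)=F^{\pm}(u)H^{\pm}(u)E^{\pm}(u)$ gives
\[
l_{ii}^{\pm}(u)=k_i^{\pm}(u)+\sum_{j<i}f_{ij}^{\pm}(u)\,k_j^{\pm}(u)\,e_{ji}^{\pm}(u),
\]
so the claim reduces to verifying that the upper Gauss coordinates $e_{ji}^{\pm}(u)$ with $j<i$ annihilate $|0\rangle$ in the Wakimoto realization. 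This follows from the bosonic formula \eqref{e:6.5}: the right-most exponential of $E_i(u)$ contains only annihilation modes $b_n^{ij},c_n^{ij}$ with $n>0$ and momenta $p_X$, all of which kill $|0\rangle$, and the higher $e_{ji}^{\pm}$ are obtained from the simple ones by nested commutators with $e_i^{\pm}$ and so share this property. The same observation, together with the projection $\chi$, implies that monomials in $\ell_k(u)$ involving any off-diagonal $l_{ij}^{\pm}$ contribute nothing to the coefficient of $|0\rangle$ in $\ell_k(u)|0\rangle$.

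Next I would evaluate the formula of Theorem 5.2 under $l_i^{\pm}(u)\mapsto\varkappa_i^{\pm}(u)$. Because the Wakimoto construction yields $\hat a_+^i(u)=0$, we have $\varkappa_i^+(u)=\varkappa^+(u)$ independent of $i$; consequently the ratio defining $\lambda_i(u)$ telescopes, all $\varkappa^+$ factors in the numerator cancelling those in the denominator except for the factor $\varkappa^+(u+\tfrac{1}{2}hn)$ coming from $j=1$. Thus
\[
\lambda_i(u)\longmapsto \varkappa_i^-(u)\,\varkappa^+\!\bigl(u+\tfrac{1}{2}hn\bigr)^{-1}=\Lambda_i(u),
\]
and inserting this into the Harish--Chandra expression of Theorem 5.2 produces exactly the claimed sum over ordered $k$-tuples. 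The main obstacle will be the first step: carefully verifying that the off-diagonal Gauss coordinates annihilate the Wakimoto vacuum, so that the scalar action of $\ell_k(u)$ on $|0\rangle$ genuinely reduces to the substitution of $\varkappa_i^{\pm}(u)$ into $\chi(\ell_k(u))$. Once this is granted, the remaining computation is the routine telescoping described above.
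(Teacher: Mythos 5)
Your overall strategy---centrality of $\ell_k(u)$, reduction to the vacuum matrix element $\langle 0|\ell_k(u)|0\rangle$, identification of that scalar with the Harish--Chandra image of Theorem 5.2 under $l_i^{\pm}(u)\mapsto\varkappa_i^{\pm}(u)$, and the telescoping $\lambda_i(u)\mapsto\Lambda_i(u)$---is the same as the paper's, and the telescoping step is correct. But your key intermediate claim is wrong as stated: the Gauss coordinates do \emph{not} satisfy $e^{\pm}_{ji}(u)|0\rangle=0$ for both signs. From the bosonic formula \eqref{e:6.5} one only learns that $E_i(u)|0\rangle$ is a power series in non-negative powers of $u$, which forces the vanishing of the part coming from $e^{+}_{i,i+1}$; the correct statements are one-sided, namely $e^{+}_{ij}(u)|0\rangle=0$ together with $\langle 0|e^{-}_{ij}(u)=0$ for $i<j$, and in general $e^{-}_{ij}(u)|0\rangle\neq 0$. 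This asymmetry is not a technicality: $\ell_k(u)$ contains both $L^{-}$ and $(L^{+})^{-1}$ factors, and the computation of $\langle 0|\ell_k(u)|0\rangle$ must exploit the particular ordering in formula \eqref{9}, where the $l^{-}$ entries stand leftmost (adjacent to $\langle 0|$) and the $\tilde l^{+}$ entries rightmost (adjacent to $|0\rangle$), so that the $-$ series is killed on the bra side and the $+$ series on the ket side.

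Second, the bridge from the vanishing of the simple Gauss coordinates to the assertion that ``monomials involving any off-diagonal $l^{\pm}_{ij}$ contribute nothing'' is asserted rather than proved, and this is where most of the actual work lies. What is needed is the chain $e^{+}_{i,i+1}(u)|0\rangle=0\Rightarrow l^{(0)}_{i+1,i}|0\rangle=0\Rightarrow e^{+}_{ij}(u)|0\rangle=0$ for all $i<j$ (by induction via $[l^{(0)}_{j+1,j},e^{+}_{ij}(u)]=e^{+}_{i,j+1}(u)$), and then, through the quasideterminant formulas of Proposition 3.2 and the quantum-minor expansion \eqref{e:4.25}, the triangularity of the actual matrix entries: $l^{+}_{ji}(u)|0\rangle=0$ for $j>i$ and $l^{+}_{ii}(u)|0\rangle=\varkappa^{+}(u)|0\rangle$, together with the left-module analogues $\langle 0|l^{-}_{ji}(u)=0$ and $\langle 0|l^{-}_{ii}(u)=\varkappa_i^{-}(u)\langle 0|$. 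Only with these relations does the sum \eqref{9} collapse to the terms with $\sigma=1$ and $i_a=j_a$, i.e.\ to the same diagonal selection as in the Harish--Chandra computation, which is what justifies evaluating $\chi(\ell_k(u))$ at $l_i^{+}=\varkappa^{+}$, $l_i^{-}=\varkappa_i^{-}$. Your decomposition $l^{\pm}_{ii}=k^{\pm}_i+\sum_{j<i}f^{\pm}_{ij}k^{\pm}_je^{\pm}_{ji}$ and the remark about nested commutators point in the right direction, but the passage from Gauss coordinates to the entries $l^{\pm}_{ij}$, and the left-versus-right distinction between the two series, must both be supplied before the identification of the eigenvalue with the substituted Harish--Chandra image is legitimate.
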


\begin{proof}
Any irreducible Wakimoto module is generated by the vacuum vector
$\mid0\rangle$ over $\DY_h(\mathfrak{gl}_n)_{cr}$. So the eigenvalues of the
series $\ell_k(u)$ can be found by calculating the series
$\langle0\mid\ell_{k}(u)\mid0
\rangle$.
It follows from \eqref{e:6.5} that $E_{i}(u)\mid0\rangle$ is a power series in
$u$ for all $i$. Therefore, the definition of $E_{i}(u)$ implies
$e_{i,i+1}^{+}(u)\mid0\rangle=0$ for $i=1,\cdots,n-1$. We have the
following relations for the action of the generators $l_{ij}^{(0)}$ on
the vacuum vector $\mid0\rangle$:
\begin{align}\label{e:6.7}
l_{i+1,i}^{(0)}\mid0\rangle=0\quad and \quad
l_{ii}^{(0)}\mid0\rangle=-\frac{1}{h}(c_{i}-c_{i-1})\mid0\rangle,
\end{align}
where $c_{0}=0$ and $c_{i}(i\geq 1)$ denotes the coefficient of $u^{-1}$
in $\varkappa^{+}(u)\cdots\varkappa^{+}(u+(i-1)h)$. Indeed,
\eqref{e:2.10} and \eqref{e:6.6} imply that $L^{+}(u)_{1\cdots i}^{1\cdots
i}\mid0\rangle$ is a scalar power series in $u^{-1}$. Using \eqref{e:4.25} to expand the
quantum minor and taking the coefficient of $u^{-1}$ we
get that $(l_{11}^{(0)}+\cdots+l_{ii}^{(0)})\mid0\rangle$ is a scalar
multiple of the vacuum vector $\mid0\rangle$. Indeed, we have
$L^{+}(u)_{1\cdots i}^{1\cdots i}=k_{1}^{+}(u)\cdots
k_{i}^{+}(u+(i-1)h)$. Therefore, $-h(l_{11}^{(0)}+\cdots+l_{ii}^{(0)})\mid0\rangle=c_{i}\mid0\rangle$, which
implies the second relation in \eqref{e:6.7}. Now use the relation
$L^{+}(u)_{1\cdots i-1,i+1}^{1\cdots i-1,i}\mid0\rangle=0$ implied
by (3.7). Taking the coefficient of $u^{-1}$, we get
$l_{i+1,i}^{(0)}\mid0\rangle=0$. As a next step, we will prove
\begin{align}\label{e:6.8}
e_{i,j}^{+}(u)\mid0\rangle=0\quad for\ all\quad i<j.
\end{align}
(5.1) implies that
\begin{align}
[l_{j+1,j}^{(0)},l_{km}^{\pm}(u)]=0\quad for\ all\quad j>k,m
\end{align}
and
\begin{align}
[l_{j+1,j}^{(0)},l_{ji}^{\pm}(u)]=l_{j+1,i}^{\pm}(u)\quad for\
all\quad j>i.
\end{align}
Hence, (3.7) implies
\begin{align}
[l_{j+1,j}^{(0)},e_{i,j}^{+}(u)]=e_{i,j+1}^{+}(u)\quad for\ all\quad
j>i
\end{align}
and we get \eqref{e:6.8} from \eqref{e:6.7} by induction.
Therefore, we have
\begin{align}
L^{+}(u)_{1\cdots i-1,i}^{1\cdots i-1,j}\mid0\rangle=0\quad for\
all\quad i<j.
\end{align}
Indeed, use \eqref{e:4.25} and \eqref{e:4.28} to expand the quantum minor we get
\begin{align}\label{e:6.13}
\sum_{\sigma}sgn(\sigma) l_{\sigma(j),i}^{+}(u)\cdots
l_{\sigma(1),1}^{+}(u+(i-1)h)\mid0\rangle=0,
\end{align}
where the sum is taken over the permutations $\sigma$ of the set
$\{1,\cdots,i-1,j\}$. Since
\begin{align}\label{e:6.14}
L^{+}(u)_{1\cdots i}^{1\cdots
i}\mid0\rangle=\sum_{\sigma}sgn(\sigma) l_{\sigma(i),i}^{+}(u)\cdots
l_{\sigma(1),1}^{+}(u+(i-1)h)\mid0\rangle
\end{align}
is a scalar power series in $u^{-1}$, it follows that
\begin{align}\label{e:6.15}
l_{ji}^{+}(u)\mid0\rangle=0\quad for\ all\quad j>i
\end{align}
and $l_{ii}^{+}(u)\mid0\rangle$ is a scalar power series in $u^{-1}$
by induction on $j-i$. Furthermore, (3.7) and
formulas \eqref{e:6.6} imply that
\begin{align}\label{e:6.16}
l_{ii}^{+}(u)\mid0\rangle=\varkappa^{+}(u)\mid0\rangle
\end{align}
for all $i=1,\cdots,n$. Similarly, we have
\begin{align}\label{e:6.17}
\langle0\mid l_{ji}^{-}(u)=0\quad for\ all\ j>i\quad and
\quad\langle0\mid l_{ii}^{-}(u)=\langle0\mid
\varkappa_{i}^{-}(u)\quad for
\end{align}
for $i=1,\cdots, n$.

We observe that $\langle0\mid E_{i}(u)$ is a power
series in $u^{-1}$. Indeed, this fact follows from \eqref{e:6.5} by using
the relations
\begin{align}\label{e:6.18}
\langle0\mid a_{n}^{i}=\langle0\mid b_{n}^{ij}=\langle0\mid
c_{n}^{ij}=0\quad for\ all\quad n<0.
\end{align}
An extra step is to apply the commutation relations \eqref{e:6.2} and
\eqref{e:6.3} which implies that
\begin{align*}
exp(q_{b^{ij}})\cdot u^{p_{b^{ij}}}=u^{p_{b^{ij}}}\cdot exp
(q_{b^{ij}})\cdot u
\end{align*}
and
\begin{align*}
exp(q_{c^{ij}})\cdot u^{p_{c^{ij}}}=u^{p_{c^{ij}}}\cdot exp
(q_{c^{ij}})\cdot u^{-1}.
\end{align*}
Hence, these relations and the
definition of $E_{i}(u)$ imply the relation $\langle0\mid
e_{i,i+1}^{-}(u)=0$ for $i=1,\cdots,n-1$. The rest of the arguments
is basically the same with some adjustments. For instance, we use the expansion
\begin{align}\label{e:6.19}
\langle0\mid\sum_{\sigma}sgn(\sigma) l_{\sigma(1),1}^{-}(u)\cdots
l_{\sigma(i),i}^{-}(u+(i-1)h).
\end{align}
to calculate the coefficient of $u^{0}$ in the power
series $\langle0\mid L^{-}(u)_{1\cdots i}^{1\cdots i}$.
Similarly, we get
\begin{align}
\langle0\mid l_{i+1,i}^{(-1)}=0\quad and \quad \langle0\mid
l_{i,i}^{(-1)}=-\frac{1}{h}(d_{i}-d_{i-1})\langle0\mid
\end{align}
where $d_{0}=0$, $d_{i}(i\geq 1)$ denotes the coefficient of $u^{0}$
in $\varkappa_{1}^{-}(u)\cdots\varkappa_{i}^{-}(u+(i-1)h)$. Finally, we use the relations
\begin{align*}
\langle0\mid\sum_{\sigma}sgn(\sigma) l_{\sigma(1),1}^{-}(u)\cdots
l_{\sigma(j),i}^{-}(u+(i-1)h)=0
\end{align*}
summed over permutations $\sigma$ of the set
$\{1,\cdots,i-1,j\}$, and the expansion \eqref{e:6.19} instead of \eqref{e:6.13} and
\eqref{e:6.14}. With the use of
relations \eqref{e:6.15}, \eqref{e:6.16} and \eqref{e:6.17}, we can conclude that the
eigenvalue $\langle0\mid\ell_{k}(u)\mid0 \rangle$ coincides with the
image of the series $\ell_{k}(u)$ under the Harish-Chandra
homomorphism calculated in Theorem 5.1 for the specialization
\begin{align*}
l_{i}^{+}(u)=\varkappa^{+}(u)\quad and \quad
l_{i}^{-}(u)=\varkappa_{i}^{-}(u)
\end{align*}
for $i=1,\cdots,n$. This completes the proof by setting $\lambda_{i}(u)$ to
$\Lambda_{i}(u)$.

\end{proof}

\begin{remark}
The calculations in Sections 4, 5 and 6 are similar to those for the quantum affine algebras $U_{q}(\hat{\mathfrak{gl}}_n)$
\cite{FJ}. The results for the double Yangian can be viewed as a counterpart of those for the quantum affine algebras.
\end{remark}

\bigskip

\centerline{\bf Acknowledgments}

\medskip

The research is supported by
the National Natural Science Foundation of China (grant nos. 12101261 and 12171303) and the Simons Foundation (grant no. 523868).

\bigskip

\bibliographystyle{amsalpha}

\end{document}